\numberwithin{equation}{section}
\theoremstyle{plain}
\newtheorem{prop}{Proposition}[section]
\newtheorem{theo}[prop]{Theorem}
\newtheorem{coro}[prop]{Corollary}
\newtheorem{lemm}[prop]{Lemma}
\newtheorem{assu}[prop]{Assumption}
\theoremstyle{definition}
\newtheorem{defi}[prop]{Definition}
\newtheorem{conj}[prop]{Conjecture}
\newtheorem{rema}[prop]{Remark}
\newtheorem{nota}[prop]{Notation}
\newcommand{\bZ}{\mathbb Z}
\newcommand{\cD}{\mathcal D}
\newcommand{\Spec}{{\rm Spec}}
\newcommand{\Pic}{{\rm Pic}}
\newcommand{\Val}{{\rm Val}}
\newcommand{\Gal}{{\rm Gal}}
\newcommand{\Vol}{{\rm Vol}}
\begin{document}
\title[]
{Campana points on biequivariant compactifications of the Heisenberg group}

\author[]{Huan Xiao}
\address{Department of Mathematics, Faculty of Science, Kumamoto University, Kumamoto, Japan}
\email{197d9003@st.kumamoto-u.ac.jp}
\date{\today}
\subjclass[2010]{11G50;  11G35; 14G05; 14G10}
\keywords{Campana points; the Heisenberg group}

\begin{abstract} 

We study Campana points on biequivariant compactifications of the Heisenberg group and confirm the log Manin conjecture introduced by Pieropan,  Smeets, Tanimoto and V\'{a}rilly-Alvarado.

\end{abstract}

\maketitle

\setcounter{tocdepth}{1}
\tableofcontents

\section{Introduction}
Manin’s conjecture concerns the distribution of rational points of bounded height on Fano varieties over number fields and has been extensively studied. It was initially proposed by Franke, Manin and Tschinkel \cite{fmt}. Later on the current and more appropriate formulations of Manin’s conjecture were made in \cite{batyrev-manin, bt-tamagawa, LST18, peyre95}. Let $ X $ be a smooth Fano variety defined over a number field $ F $ and $ L $ an ample line bundle on $ X $. Let $ \mathsf{H}_{\mathcal{L}} $ be a height function
\begin{equation*}
\mathsf{H}_{\mathcal{L}}: X(F)\rightarrow \mathbb{R}_{> 0},
\end{equation*}
where $ \mathcal{L} $ is an adelically metrized line bundle associated to $ L $. Manin’s conjecture states that there is a subset $ U $ of $ X(F) $ such that the counting function
\begin{equation*}
\mathsf{N}(U,\mathcal{L},T):=\#\{x\in U\mid \mathsf{H}_{\mathcal{L}}(x)\leq T\}
\end{equation*}
satisfies the asymptotic formula
\begin{equation*}
\mathsf{N}(U,\mathcal{L},T)\sim cT^{a(X,L)}(\log T)^{b(X,F,L)-1},
\end{equation*}
as $ T\rightarrow \infty $, where $ c $  is a positive constant, and $ a(X,L), b(X,F,L) $ are certain geometric invariants introduced in \cite{batyrev-manin}. Manin's conjecture for homogeneous spaces has been studied for example in \cite{BT98, BT-general, VecIII, STBT, GMO08, GO11, TT12, GTBT11, ST15}, and it also has been extensively studied for various other classes of varieties, such as Del Pezzo surfaces. We refer the readers to the excellent surveys \cite{t-survey, browning-survey} and the references therein. Besides the distribution of rational points, the problem of counting integral points has also been considered by \cite{CT-additive, CT-toric, BO12, TBT11, TT15, Cho19}.\par
Let $ G $ be the Heisenberg group, that is, the subgroup of the general linear group $ \mathrm{GL}_{3} $ whose elements are the upper triangular matrices of the form
\begin{equation*}
\begin{pmatrix}
1 & x &z\\ 
0& 1 & y\\
0& 0 & 1
\end{pmatrix}.
\end{equation*}
As an algebraic variety $ G $ is isomorphic to a 3-dimensional affine space. A smooth projective variety $ X $ is called a biequivariant compactification of $ G $ if $ G $ is a dense Zariski open subset of $ X $ and $ X $ carries both left and right $ G $-actions, extending the left and right actions of $ G $ on itself. A trivial example of biequivariant compactifications of $ G $ is the projective space $ \mathbb{P}^{3} $: let $ [a:b:c:d]\in \mathbb{P}^{3} $ be a point. Then the left action of $ G $ is
\begin{equation*}
\begin{pmatrix}
1 & x &z\\ 
0& 1 & y\\
0& 0 & 1
\end{pmatrix}\cdot [a:b:c:d]=[a:ax+b:ay+c:az+d+xc],
\end{equation*}
and the right action is 
\begin{equation*}
[a:b:c:d]\cdot\begin{pmatrix}
1 & x &z\\ 
0& 1 & y\\
0& 0 & 1
\end{pmatrix}=[a:ax+b:ay+c:az+by+d].
\end{equation*}
For a general construction one may take $ X $ to be the Zariski closure of an orbit of a projective representation of $ G $, for details see \cite{ST04}.\par
Campana orbifolds and Campana points were introduced by Campana \cite{Cam, Cam11}, and the study of the distribution of Campana points over a number field was initiated in for example \cite{BVV12, VV12, BY19}. Recently, M. Pieropan, A. Smeets, S. Tanimoto and A. V\'{a}rilly-Alvarado \cite{PSTVA19} initiated a systematic quantitative study of Campana points. In their paper, Pieropan et al.~\cite{PSTVA19} studied the distribution of Campana points of bounded height on equivariant compactifications of vector groups and proposed a log version of Manin's conjecture. Pieropan and Schindler \cite{PS20} recently developed the hyperbola method to study the distribution of Campana points on toric varieties over $ \mathbb{Q} $.\par
In this paper we study Campana points on biequivariant compactifications of the Heisenberg group and confirm the log Manin conjecture of Pieropan et al.~\cite{PSTVA19} for this class of varieties over $ \mathbb{Q} $.\par
Let us briefly review the notion of Campana points and the log Manin conjecture studied in Pieropan et al.~ \cite{PSTVA19}. We begin by recalling some basics of Campana orbifolds and Campana points.
\subsection{Campana orbifolds}
The notion of Campana orbifolds was introduced in Campana \cite{Cam, Cam11} and Abramovich \cite{Abra}. Pieropan et al.~\cite[\S 3.2]{PSTVA19} clarified two types of Campana points, and in this paper we shall discuss the Campana points in the sense of \cite[Definition 3.4]{PSTVA19}. Through the paper we let $ F $ be a number field.\par
Let $ X $ be a smooth variety and $ D_{\varepsilon} $ an effective Weil $ \mathbb{Q} $-divisor  on $  X$. A pair $(X, D_{\varepsilon})$ is called a $\textit{Campana orbifold}$ over $ F $ if
\begin{equation*}
D_{\varepsilon}=\sum_{\alpha\in \mathcal{A}}\varepsilon_{\alpha}D_{\alpha},
\end{equation*}
where $ D_{\alpha} $ is a prime divisor and
\begin{equation*}
\varepsilon_{\alpha}\in \mathfrak{W}:=\left\lbrace 1-\dfrac{1}{m}\vline \: m\in \mathbb{Z}_{\geq 1}\right\rbrace \cup \{1\}
\end{equation*}
for all $ \alpha\in \mathcal{A} $, and the support $ D_{\mathrm{red}}=\sum_{\alpha\in \mathcal{A}}D_{\alpha} $ is a divisor with strict normal crossings.\par
One sees that any Campana orbifold $(X, D_{\varepsilon})$ is a $\mathsf{dlt} $  (divisorial log terminal) pair. If moreover $ \varepsilon_{\alpha}\neq 1 $ for all $ \alpha\in \mathcal{A} $, we say that $(X, D_{\varepsilon})$ is $\mathsf{klt} $ (Kawamata log terminal). For the definitions of dlt and klt one should consult \cite{KM98}.\par 
Let $ \Val(F) $ be the set of all places of $ F $. For $ v\in\Val(F) $, we denote the completion of $ F $ at $ v $ by $ F_{v} $. If $ v $ is a nonarchimedean place, we denote the corresponding ring of integers by $ \mathcal{O}_{v} $ with  maximal ideal $ \mathfrak{m}_{v} $ and residue field $ k_{v} $. We denote the ring of adeles of $ F $ by $ \mathbb{A}_{F} $ and fix a finite set $ S $ of places of $  F$ containing all archimedean places.
\begin{defi}[{\cite[\S 3]{PSTVA19}}]
We say $ (\mathcal{X}, \mathcal{D}_{\varepsilon}) $ is a $ \textit{good integral model} $ of $(X, D_{\varepsilon})$ away from $S $ if $ \mathcal{X} $ is a flat, regular and proper model over $ \Spec\, \mathcal{O}_{F,S} $ where $ \mathcal{D}_{\varepsilon}=\sum_{\alpha\in \mathcal{A}}\varepsilon_{\alpha}\mathcal{D}_{\alpha} $ and $ \mathcal{D}_{\alpha} $ is the Zariski closure of $ D_{\alpha} $ in $ \mathcal{X} $.
\end{defi}

\subsection{Campana points}
Let us fix a good integral model $(\mathcal{X}, \mathcal{D}_{\varepsilon})$ for $(X, D_{\varepsilon})$ over $ \Spec\,\mathcal{O}_{F,S} $, and let $ \mathcal{A}_{\varepsilon}=\{\alpha\in \mathcal{A}: \varepsilon_{\alpha}\neq 0\} $ and $ X^{\circ}=X\backslash (\cup_{\alpha\in \mathcal{A}_{\varepsilon}}D_{\alpha}) $. If $ P\in X^{\circ}(F) $ and $ v\not\in S $ then there is an induced point $ \mathcal{P}_{v}\in \mathcal{X}(\mathcal{O}_{v}) $. Let $ \alpha\in \mathcal{A} $ be such that $ \mathcal{P}_{v}\nsubseteq \mathcal{D}_{\alpha}$, we denote the colength of the ideal defined by the pullback of $ \mathcal{D}_{\alpha} $ via $ \mathcal{P}_{v} $ by $ n_{v}(\mathcal{D}_{\alpha},P) $ and call it the intersection multiplicity of $ P $ and $ \mathcal{D}_{\alpha} $ at $ v $.\par
Following \cite{PSTVA19}, a point $ P\in X^{\circ}(F) $ is called a $\textit{Campana $ \mathcal{O}_{F,S} $-point} $ on $(\mathcal{X}, \mathcal{D}_{\varepsilon})$ if for all places $ v\not\in S $,  $ n_{v}(\mathcal{D}_{\alpha},P)=0 $ whenever $ \alpha\in \mathcal{A}_{\varepsilon} $ satisfies $ \varepsilon_{\alpha}=1 $, and 
\begin{equation*}
n_{v}(\mathcal{D}_{\alpha},P)\geq \dfrac{1}{1-\varepsilon_{\alpha}}
\end{equation*}
whenever $ \alpha\in \mathcal{A}_{\varepsilon} $ satisfies $ \varepsilon_{\alpha}<1 $ and  $ n_{v}(\mathcal{D}_{\alpha},P)>0 $.
So when writing $ \varepsilon_{\alpha}=1-\frac{1}{m_{\alpha}} $, $ n_{v}(\mathcal{D}_{\alpha},P)\geq m_{\alpha} $ whenever $ n_{v}(\mathcal{D}_{\alpha},P)>0 $.
We denote by $(\mathcal{X}, \mathcal{D}_{\varepsilon})(\mathcal{O}_{F,S})$ the set of all Campana $ \mathcal{O}_{F,S} $-points on $(\mathcal{X}, \mathcal{D}_{\varepsilon})$.

\subsection{A log Manin conjecture}
Let $ (X,D_{\varepsilon}) $ be a $ \textit{Fano orbifold} $ over a number
field $ F $, that is, $ (X,D_{\varepsilon}) $ is a Campana orbifold with $ X $ projective and $ -(K_{X}+D_{\varepsilon}) $ ample, where $ K_{X} $ is the canonical divisor of $ X $. Assume that $ (X,D_{\varepsilon}) $ is klt. Let
\begin{equation*}
\mathsf{H}_{\mathcal{L}}: X(F)\rightarrow \mathbb{R}_{>0}
\end{equation*}
be the height function associated with an adelically metrized big divisor $ \mathcal{L}=(L,\Vert \cdot \Vert )$ on $ X $. Let $U \subset X(F)$ and $ T>0 $, we denote the counting function by
\begin{equation*}
\mathsf{N}(U,\mathcal{L},T)=\#\{P\in U\vert \mathsf{H}_{\mathcal{L}}(P)\leq T\}.
\end{equation*}
\begin{conj}[Log Manin conjecture]\label{conj}
Suppose that the divisor $ L $ is big and nef and that the set of klt Campana points $ (\mathcal{X}, \mathcal{D}_{\varepsilon})(\mathcal{O}_{F,S}) $ is not thin in the sense of \cite[Definition 3.6]{PSTVA19}. Then there is a thin set $ Z\subset (\mathcal{X}, \mathcal{D}_{\varepsilon})(\mathcal{O}_{F,S}) $ such that as $ T\rightarrow \infty $
\begin{equation*}
\mathsf{N}((\mathcal{X}, \mathcal{D}_{\varepsilon})(\mathcal{O}_{F,S})\backslash Z,\mathcal{L},T)\sim c(F,S,(\mathcal{X}, \mathcal{D}_{\varepsilon}),\mathcal{L},Z)T^{a((X,D_{\varepsilon}),L)}\left(\log T\right)^{b(F,(X,D_{\varepsilon}),L)-1}
\end{equation*}
where $ a((X,D_{\varepsilon}),L),b(F,(X,D_{\varepsilon}),L) $ and $ c(F,S,(\mathcal{X}, \mathcal{D}_{\varepsilon}),\mathcal{L},Z) $ are positive constants described in \cite{PSTVA19}.
\end{conj}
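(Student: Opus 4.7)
The plan is to follow the Chambert-Loir--Tschinkel method of harmonic analysis on the adelic points of $G$, extending the treatment of Shalika--Tschinkel \cite{ST04} for rational points on biequivariant compactifications of the Heisenberg group so as to incorporate the Campana divisibility constraints. First I would parametrize a biequivariant compactification $X$ of $G$ and describe its boundary $X\setminus G = \bigcup_{\alpha\in\mathcal{A}}D_\alpha$ together with a basis of $\mathrm{Pic}(X)_{\mathbb{Q}}$ given by the boundary classes. Writing $L = \sum_\alpha \lambda_\alpha D_\alpha$, I would extend to a multivariable height $\mathsf{H}(\cdot,\mathbf{s})$ with complex parameters $\mathbf{s} = (s_\alpha)_{\alpha\in\mathcal{A}}$, so that the object to analyze is the Campana height zeta function
\[
Z(\mathbf{s}) = \sum_{P\in(\mathcal{X},\mathcal{D}_\varepsilon)(\mathcal{O}_{\mathbb{Q},S})} \mathsf{H}(P,\mathbf{s})^{-1}.
\]

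The next step is to express $Z(\mathbf{s})$ as a sum over $G(\mathbb{Q})\subset G(\mathbb{A}_{\mathbb{Q}})$ of an adelic height $H_{\mathbf{s}}$ multiplied by local indicator functions $\mathbf{1}_{\mathrm{Cpna},v}$ that enforce, at each $v\notin S$, the divisibility $n_v(\mathcal{D}_\alpha,P)=0$ or $n_v(\mathcal{D}_\alpha,P)\geq m_\alpha$. Since $G$ is unipotent, $G(\mathbb{Q})$ is a cocompact lattice in $G(\mathbb{A}_{\mathbb{Q}})$ and Poisson summation applies. The Plancherel decomposition on $G(\mathbb{Q})\backslash G(\mathbb{A}_{\mathbb{Q}})$ splits into the unitary characters of the abelianization $G^{\mathrm{ab}}\cong\mathbb{G}_a^2$ together with a family of Stone--von Neumann representations $\pi_\psi$ parametrized by the non-trivial central characters $\psi\in\widehat{\mathbb{Q}\backslash\mathbb{A}_{\mathbb{Q}}}$, yielding a spectral expression
\[
Z(\mathbf{s}) = \sum_{\chi}\widehat{H}_{\mathbf{s}}(\chi) + \sum_{\psi\neq 1}\mathrm{tr}\,\pi_\psi(H_{\mathbf{s}}).
\]

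The principal computation is the evaluation of the local Fourier transforms of $H_{\mathbf{s},v}\cdot\mathbf{1}_{\mathrm{Cpna},v}$. At each nonarchimedean place the Campana restriction replaces integration over $G(\mathbb{Q}_v)$ by integration over points whose boundary coordinates are either $v$-adic units or lie in $\mathfrak{m}_v^{m_\alpha}$; this is the direct analogue of the local integrals carried out in \cite{PSTVA19} for vector groups, and should evaluate to products of local $L$-factors corrected by explicit geometric series depending on the $m_\alpha$. From these transforms I would derive the meromorphic continuation of $Z(\mathbf{s})$ and identify the rightmost pole along $s_\alpha = a_\alpha$. The order of this pole should equal $b(\mathbb{Q},(X,D_\varepsilon),L)$ and its leading coefficient should reproduce the Tamagawa-type constant predicted in \cite{PSTVA19}; a standard multivariable Tauberian theorem of Landau--Ikehara type then converts this pole structure into the desired asymptotic.

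The main obstacle is controlling the non-abelian spectral contribution: the matrix coefficients of $\pi_\psi$ are infinite-dimensional, and the sum over $\psi$ requires uniform estimates in $\psi$ so that convergence holds in the half-plane reaching the critical pole. Shalika--Tschinkel handled this for rational points through archimedean bounds via Kirillov models combined with nonarchimedean vanishing statements; since the Campana condition only sharpens the cut-off of the integrand, these estimates should transfer. Nevertheless, bookkeeping the modified local integrals, matching the resulting leading constant with the Peyre--Campana prediction of Pieropan et al., and verifying the non-thinness hypothesis on $(\mathcal{X},\mathcal{D}_\varepsilon)(\mathcal{O}_{\mathbb{Q},S})$ will absorb the bulk of the technical work.
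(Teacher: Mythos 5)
Your strategy coincides with the paper's: the height zeta function method via the spectral decomposition of $\mathsf{L}^2(G(\mathbb{Q})\backslash G(\mathbb{A}_{\mathbb{Q}}))$ into the trivial piece, the characters of $G^{\mathrm{ab}}$, and the Stone--von Neumann representations (the paper's $\mathsf{Z}_{0,\varepsilon},\mathsf{Z}_{1,\varepsilon},\mathsf{Z}_{2,\varepsilon}$), followed by local Fourier computations with a Campana indicator $\delta_{\varepsilon,v}$ and a Tauberian theorem. This is exactly how the paper proceeds, following \cite{ST04} and \cite{PSTVA19}.

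That said, two substantive points are under-specified in your sketch and must be addressed for the argument to close. First, the statement you are proving is a general conjecture, and neither the paper nor your proposal establishes it in that generality: the paper proves it only for biequivariant compactifications of the Heisenberg group over $\mathbb{Q}$, under additional hypotheses that are genuinely needed and that do not appear in your plan --- namely, the rigidity (Iitaka dimension zero) of $aL+K_X+D_\varepsilon$, which is the mechanism ensuring that the abelian ($\mathsf{Z}_1$) and non-abelian ($\mathsf{Z}_2$) spectral pieces do not contribute to the leading pole, and Assumption \ref{assu} (from \cite[Assumption 4.7]{ST04}) on the induced compactification $Y$ of the subgroup $\mathrm{U}$, which controls the geometry entering the non-abelian integrals. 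Without the rigidity input, the identification of the pole order with $b(F,(X,D_\varepsilon),L)$ is not automatic: one has to rule out that some $\eta_{\mathbf{a}}$ or some $\psi_a$ produces a pole of equal order, and the paper does this via the divisor-theoretic observation that a competing pole would force $E(f_{\mathbf{a}})$ (or $E(f_a)$) to be supported in the boundary of $aL+K_X+D_\varepsilon$, contradicting rigidity. Second, you leave the exceptional thin set $Z$ unspecified; in the paper it is taken to be $(X\setminus G)\cap (\mathcal{X},\mathcal{D}_\varepsilon)(\mathbb{Z}_S)$, i.e.\ the boundary Campana points, which is what makes the restriction to $G(\mathbb{Q})_\varepsilon$ legitimate. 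These are not cosmetic: they are precisely the places where the harmonic-analysis template must be supplemented to yield the asymptotic in the conjectured form, and your outline should flag them as necessary hypotheses rather than expected byproducts of the local bookkeeping.
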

Pieropan et al.~ \cite[Theorem 1.2]{PSTVA19} proved the conjecture above for equivariant compactifications of vector groups, and they also discussed dlt cases \cite[Theorem 1.4]{PSTVA19}.

\subsection{Main results in this paper}
It is shown in \cite{ST04} that Manin's conjecture is true for biequivariant compactifications of the Heisenberg group. Following the strategy of Pieropan et al.~ \cite{PSTVA19}, we study Campana points on biequivariant compactifications of the Heisenberg group over $ \mathbb{Q} $ and we confirm the above log Manin conjecture for this case. 
\begin{theo}\label{main thm}
Let $ X $ be a smooth projective biequivariant compactification of the Heisenberg group $ G $ over $ \mathbb{Q} $ such that the boundary divisor $D = X \backslash G$ is a strict normal crossings divisor on $ X $. Let $ S $ be a finite set of places of $ \mathbb{Q} $ containing the archimedean place. Assume that $ (X,D_{\varepsilon}) $ is klt and has a good integral model away from $ S $ and assume Assumption \ref{assu}(see in \S \ref{ass}). If $ aL+K_{X}+D_{\varepsilon} $ is rigid (i.e., its Iitaka dimension is 0), then Conjecture \ref{conj} holds for $ (\mathcal{X}, \mathcal{D}_{\varepsilon},\mathcal{L}) $ with the exceptional set $ Z=(X\backslash G)\cap (\mathcal{X}, \mathcal{D}_{\varepsilon})(\mathbb{Z}_{S})$.
\end{theo}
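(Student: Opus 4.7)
The proof will combine the harmonic analytic approach of Shalika and Tschinkel \cite{ST04}, who established the ordinary Manin conjecture for biequivariant compactifications of $G$, with the Campana modifications introduced by Pieropan et al.~\cite{PSTVA19} for equivariant compactifications of vector groups. The plan is to study the Campana height zeta function
\[
\mathsf{Z}(s) \;=\; \sum_{P \in (\mathcal{X}, \mathcal{D}_{\varepsilon})(\mathbb{Z}_{S}) \cap G(\mathbb{Q})} \mathsf{H}_{\mathcal{L}}(P, s)^{-1},
\]
where $s$ runs over a complex neighbourhood of $\Pic(X)_{\mathbb{R}}$ and $\mathsf{H}_{\mathcal{L}}(\cdot, s)$ is the adelic complex power of the height. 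Meromorphically continuing $\mathsf{Z}(s)$ past its rightmost pole and invoking a Tauberian theorem will then yield the asymptotic in Conjecture \ref{conj}; the exceptional thin set $Z = (X \setminus G) \cap (\mathcal{X}, \mathcal{D}_{\varepsilon})(\mathbb{Z}_{S})$ is absorbed automatically by restricting the sum to $G(\mathbb{Q})$.

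The bi-$G$-invariant height extends to a function on the adelic Heisenberg group $G(\mathbb{A}_{\mathbb{Q}})$, and at every place $v \notin S$ I multiply the usual local height factor by the indicator of Campana-admissible $v$-adic points, so that any non-zero intersection multiplicity $n_{v}(\mathcal{D}_{\alpha}, P)$ is forced to be at least $m_{\alpha} = 1/(1-\varepsilon_{\alpha})$. Poisson summation on $G(\mathbb{Q}) \backslash G(\mathbb{A}_{\mathbb{Q}})$ then converts $\mathsf{Z}(s)$ into a spectral sum over the unitary dual of $G$, which splits into one-dimensional characters factoring through the abelianization $G^{\mathrm{ab}} \cong \mathbb{G}_{a}^{2}$, and the infinite-dimensional Stone--von Neumann representations attached to non-trivial characters of the centre. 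The main term will come from the trivial character: its contribution factors as an Euler product of local densities, and stratifying $G(\mathbb{Z}_{v})$ according to the $v$-adic intersection multiplicities with the $\mathcal{D}_{\alpha}$, in the manner of \cite{PSTVA19}, identifies this Euler product with a product of shifted Dedekind zeta factors whose pole at $s = a((X, D_{\varepsilon}), L)$ has order exactly $b(\mathbb{Q}, (X, D_{\varepsilon}), L)$ under the rigidity hypothesis on $aL + K_{X} + D_{\varepsilon}$; the residue then matches the Peyre-type constant predicted in \cite{PSTVA19}, modulo the input from Assumption \ref{assu}.

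The main obstacle will be to control the remaining spectrum so that it contributes only lower-order terms. This requires uniform decay of the local Fourier and operator coefficients in the spectral parameter at every place. At $v \in S$ the classical integration-by-parts argument in Schwartz--Bruhat space from \cite{ST04} carries over, but at $v \notin S$ the Campana test function is merely locally constant rather than smooth, so the standard estimates must be replaced by an explicit evaluation of oscillatory $v$-adic sums on compact open subgroups, together with a uniform vanishing of the local Fourier transform outside a conductor-bounded set of characters. The infinite-dimensional contribution further demands operator-norm bounds on the local Stone--von Neumann representations tested against the height, which is where the bulk of the technical work will lie. Once these spectral bounds are in place, the non-trivial spectrum yields a function holomorphic on $\Re(s) > a - \delta$ for some $\delta > 0$, and a Tauberian theorem of Chambert-Loir--Tschinkel type then delivers the asymptotic in Conjecture \ref{conj} with the predicted leading constant.
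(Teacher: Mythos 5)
Your proposal follows essentially the same route as the paper: height zeta function, spectral decomposition of $\mathsf{L}^{2}(G(\mathbb{Q})\backslash G(\mathbb{A}_{\mathbb{Q}}))$ into the trivial part, the one-dimensional characters of $G^{\mathrm{ab}}$, and the Stone--von Neumann representations; stratification by reduction type at good places to extract shifted local zeta factors; and a Tauberian theorem to close. The identification of the main obstacle (uniform decay in the spectral parameter, and the delicacy of working with the locally constant Campana indicator at finite places) is accurate and matches what the paper labours over in \S\ref{sec1}--\S\ref{ass}.

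There is, however, one claim that is not quite right and should be corrected. You assert that under the rigidity hypothesis ``the non-trivial spectrum yields a function holomorphic on $\Re(s) > a - \delta$.'' This is too strong. After regularisation, each $\mathsf{Z}_{1,\varepsilon}(sL,\eta_{\mathbf{a}})$ carries one shifted Dedekind zeta factor for every $\alpha\in\mathcal{A}^{0}(\mathbf{a})$, and any $\alpha\in\mathcal{A}^{0}(\mathbf{a})\cap\mathcal{A}_{\varepsilon}(L)$ produces a pole of that factor at $s=\overline{a}$. Rigidity does not make $\mathcal{A}^{0}(\mathbf{a})\cap\mathcal{A}_{\varepsilon}(L)$ empty; what it rules out is $\mathcal{A}^{0}(\mathbf{a})\supset\mathcal{A}_{\varepsilon}(L)$, because that inclusion would force $E(f_{\mathbf{a}})\sim\sum_{\alpha}d_{\alpha}(f_{\mathbf{a}})D_{\alpha}$ to be linearly equivalent to an effective divisor whose support sits inside $\operatorname{Supp}(\overline{a}L+K_{X}+D_{\varepsilon})$, contradicting the assumption that $\overline{a}L+K_{X}+D_{\varepsilon}$ has Iitaka dimension $0$. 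Consequently $\mathsf{Z}_{1,\varepsilon}(sL)$ and $\mathsf{Z}_{2,\varepsilon}(sL)$ may still have poles at $s=\overline{a}$, but of order strictly less than $\overline{b}$, which suffices because only the leading Laurent coefficient at $s=\overline{a}$ feeds the Tauberian asymptotic. Your sketch also leaves implicit the positivity of the residue $\overline{c}$, which the paper handles by rewriting $\lim_{s\to\overline{a}}(s-\overline{a})^{\overline{b}}\mathsf{Z}_{0,\varepsilon}(sL)$ as a Tamagawa-type volume of $X^{\circ}(\mathbb{A}_{\mathbb{Q}})_{\varepsilon}$; without that step the constant in Conjecture \ref{conj} is not seen to be positive.
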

Let us introduce some notations. We denote
\begin{equation*}
G(\mathbb{Q})_{\varepsilon}=G(\mathbb{Q})\cap (\mathcal{X}, \cD_{\varepsilon})(\mathbb{Z}_S),
\end{equation*}
and
\begin{equation*}
G(\mathbb{Q}_{v})_{\varepsilon}=G(\mathbb{Q}_{v})\cap (\mathcal{X}, \cD_{\varepsilon})(\mathbb{Z}_{v}).
\end{equation*}
For $ v\not \in S $, let $ \delta_{\varepsilon,v} $ denote the local indicator function detecting whether or not a given point in $ G(\mathbb{Q}_{v}) $ belongs to $ G(\mathbb{Q}_{v})_{\varepsilon} $. If $ v\in S $, set $ \delta_{\varepsilon,v}\equiv 1 $. The global indicator function is thus $ \delta_{\varepsilon}=\prod_{v} \delta_{\varepsilon,v} $.

\begin{theo}\label{main thm2}
Let $ \mathcal{X},\mathcal{D},\varepsilon, S $ be as in the theorem above and let $ L=-\left(K_{X}+D_{\varepsilon}\right) $. Assume that $ (X,D_{\varepsilon}) $ is dlt and has a good integral model away from $ S $ and assume Assumption \ref{assu}(see in \S \ref{ass}). Set
\begin{equation*}
G(\mathbb{Q})_{\varepsilon}=G(\mathbb{Q})\cap (\mathcal{X}, \cD_{\varepsilon})(\mathbb{Z}_S).
\end{equation*}
Then as $ T\rightarrow \infty $, there are positive constants $ b,c $ such that
\begin{equation*}
\mathsf{N}(G(\mathbb{Q})_{\varepsilon},\mathcal{L},T)\sim \dfrac{c}{(b-1)!}T(\log T)^{b-1}.
\end{equation*}
\end{theo}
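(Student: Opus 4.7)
The approach is to adapt the height zeta function / spectral method of Shalika--Tschinkel \cite{ST04} for biequivariant compactifications of the Heisenberg group, incorporating the Campana indicator $\delta_\varepsilon=\prod_v\delta_{\varepsilon,v}$ at every step in the spirit of \cite{PSTVA19}. For $\mathbf{s}$ in an open cone of $\mathrm{Pic}(X)_{\mathbb{C}}$, I would introduce the twisted height zeta function
\[
Z(\mathbf{s},g):=\sum_{\gamma\in G(\mathbb{Q})}H(\mathbf{s},\gamma g)^{-1}\,\delta_\varepsilon(\gamma g),
\]
which descends to a function on $G(\mathbb{Q})\backslash G(\mathbb{A})$ because $\delta_\varepsilon$ is $G(\mathbb{Q})$-invariant and $H(\mathbf{s},\cdot)^{-1}$ is right $\mathbf{K}$-invariant for the standard maximal compact $\mathbf{K}\subset G(\mathbb{A}_f)$. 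Evaluated at $g=e$ and specialised along the ray $\mathbf{s}\mapsto -s(K_X+D_\varepsilon)$, it is the Dirichlet series whose rightmost pole will produce the asymptotic for $\mathsf{N}(G(\mathbb{Q})_\varepsilon,\mathcal{L},T)$ via a Tauberian argument.

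Next, I would invoke the Stone--von Neumann theorem to decompose $L^2(G(\mathbb{Q})\backslash G(\mathbb{A}))$ into the sum of one-dimensional characters factoring through the abelianisation $G^{\mathrm{ab}}\cong\mathbb{G}_a^2$ and the direct integral of infinite-dimensional representations indexed by nontrivial central characters $\psi\in\widehat{Z(G)(\mathbb{Q})\backslash Z(G)(\mathbb{A})}^{\times}$. For the one-dimensional part, Poisson summation on $\mathbb{G}_a(\mathbb{A})^2/\mathbb{G}_a(\mathbb{Q})^2$ expresses the contribution as a sum over characters $\chi$ of the Fourier transforms
\[
\widehat{H\delta_\varepsilon}(\mathbf{s},\chi)=\prod_v\int_{G(\mathbb{Q}_v)}H_v(\mathbf{s},g)^{-1}\,\delta_{\varepsilon,v}(g)\,\overline{\chi(g^{\mathrm{ab}})}\,dg.
\]
At $v\notin S$ the local integral is evaluated by stratifying $G(\mathbb{Q}_v)$ by the valuation vector recording intersection multiplicities with the boundary components, the Campana condition replacing the usual geometric sum $\sum_{n\geq 0}q_v^{-ns_\alpha}$ in the $\alpha$-direction by the restricted sum over $\{0\}\cup\{n\geq m_\alpha\}$, yielding an explicit rational function of the local parameters. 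Archimedean integrals are controlled by standard oscillatory integral bounds.

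Assembling Euler products and invoking Assumption \ref{assu} to ensure uniformity in $\chi$, I would then show that the trivial character contributes a unique pole at $s=1$ of order $b=\mathrm{rk}\,\mathrm{Pic}(X,D_\varepsilon)$, whereas summation of the nontrivial-character Fourier transforms yields a function holomorphic in a slightly larger half-plane thanks to the rapid decay of archimedean Fourier transforms and uniform finite-place estimates; this is the Campana analogue of the corresponding step for vector groups in \cite[\S 5]{PSTVA19}.

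The main obstacle is the contribution of the infinite-dimensional part of the spectrum. Following \cite{ST04}, for each nontrivial central character $\psi$ the contribution reduces via Stone--von Neumann to an oscillatory integral of $H(\mathbf{s},\cdot)^{-1}\delta_\varepsilon(\cdot)$ against $\psi$ along a slice transverse to the centre, and the task is to estimate it uniformly well enough in $\psi$ that integration over central characters produces a function holomorphic to the left of the critical line, contributing no additional pole. The key technical input is integration by parts along the centre direction (exploiting the oscillation of $\psi$) combined with Campana-aware analogues of the local estimates of \cite{ST04}; Assumption \ref{assu} is invoked here once more to guarantee the needed uniformity at places where $\delta_{\varepsilon,v}$ imposes nontrivial divisibility. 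Once $Z(\mathbf{s},e)$ is shown to have a unique pole of order $b$ at $s=1$ on the critical axis, a Delange--Ikehara Tauberian theorem yields
\[
\mathsf{N}(G(\mathbb{Q})_\varepsilon,\mathcal{L},T)\sim\frac{c}{(b-1)!}T(\log T)^{b-1},
\]
with $c>0$ because every local factor in the leading residue retains strictly positive Campana contributions.
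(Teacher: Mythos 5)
Your high-level framework (height zeta function, spectral decomposition à la Shalika--Tschinkel, Campana indicator $\delta_\varepsilon$ inserted in each local integral, Tauberian theorem at the end) matches the paper. However, your proposal misses what is actually distinctive about Theorem~\ref{main thm2}: it is the \emph{dlt} case, where $\mathcal{A}^{\mathrm{nklt}}=\{\alpha:\varepsilon_\alpha=1\}$ is nonempty, and the analysis changes substantially compared to the klt case (Theorem~\ref{main thm}).

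First, you assert that the pole of the height zeta function at $s=1$ has order $b=\mathrm{rk}\,\mathrm{Pic}(X,D_\varepsilon)$. This is not the exponent the paper obtains in the dlt case. Because for $\alpha\in\mathcal{A}^{\mathrm{nklt}}$ the Campana condition forces $n_v(\mathcal{D}_\alpha,P)=0$ at all $v\notin S$, the local factors at $v\notin S$ contribute a zeta factor only over $\mathcal{A}^{\mathrm{klt}}$, while the nklt directions contribute poles only through the places $v\in S$ (where $\delta_{\varepsilon,v}\equiv 1$). The resulting order of the pole is
\begin{equation*}
b'=\#\mathcal{A}^{\mathrm{klt}}+\sum_{v\in S}b(\mathbb{Q}_v,(X,D_{\mathrm{red}}),L),
\end{equation*}
where the second sum is over local $b$-invariants measuring the dimension of configurations of nklt components meeting over $\mathbb{Q}_v$ (Proposition~\ref{propS} and Lemma~\ref{overlemma1}). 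Your proposal never produces the factor $\prod_{v\in S}\zeta_{\mathbb{Q}_v}(s-1)^{-b(\mathbb{Q}_v,(X,D_{\mathrm{red}}),L)}$ nor the resulting hybrid formula for the pole order, which is the essential new geometric input in the dlt setting.

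Second, you claim that summing the nontrivial-character Fourier transforms (both the one-dimensional $\eta_{\mathbf{a}}$-part and the infinite-dimensional $\psi_a$-part) gives a function holomorphic in a slightly larger half-plane, contributing no additional pole at $s=1$. In the dlt case this is false: the pieces $\mathsf{Z}_{1,\varepsilon}(sL)$ and $\mathsf{Z}_{2,\varepsilon}(sL)$ do have poles at $s=1$ (coming from the local $S$-factors attached to the nklt divisors); what one must prove is that their pole order is \emph{strictly less} than $b'$. The paper's Lemmas~\ref{overlemma2} and~\ref{overlemma3} accomplish this by a genuinely geometric argument by contradiction: assuming equal pole order forces $d_\alpha(f_{\mathbf{a}})=0$ on $\mathcal{A}^{\mathrm{klt}}$ and on a maximal nklt intersection configuration $B$, and then moving a general point of $\bigcap_{\beta\in B}D_{v,\beta,\mathrm{red}}$ by $g(tb_1,0,tb_2)$ with $t\to\infty$ produces a limit lying on a boundary divisor with $d_\alpha(f_{\mathbf{a}})>0$, enlarging $B$ and contradicting maximality. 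Your proposal contains no analogue of this step, and without it the Tauberian argument cannot be applied. Finally, you should also account for the klt analogue (Theorem~\ref{main thm}) using the rigidity of $\overline{a}L+K_X+D_\varepsilon$ rather than lumping both theorems under the same cohomological count; the klt and dlt arguments are structurally different in the paper precisely at this point.
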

\begin{rema}
One can achieve Assumption \ref{assu} by applying a resolution, however one cannot reduce to this situation as the set of Campana points is not invariant under birational morphisms. In particular, the coefficient  $ c(F,S,(\mathcal{X}, \mathcal{D}_{\varepsilon}),\mathcal{L},Z) $ in Conjecture \ref{conj} will change in general under birational morphisms. For details and examples see \cite[\S 3.6]{PSTVA19}.
\end{rema}
To prove Theorems \ref{main thm} and \ref{main thm2}, we use the height zeta function method. As we shall see later, the height zeta function is defined to be
\begin{equation*}
\mathsf{Z}_{\varepsilon}(\mathbf{s},g)=\sum_{\gamma\in G(\mathbb{Q})}\delta_{\varepsilon}(\gamma g)\mathsf{H}(\mathbf{s},\gamma g)^{-1},
\end{equation*}
where $ g\in G(\mathbb{A}_{\mathbb{Q}}) $, for the meaning of $ \mathbf{s} $ see Notation \ref{notations} and for the meaning of the height pairing see \S \ref{height pairing}. We will not use this general height zeta function. Instead for our convenience we use the restriction of $ \mathsf{Z}_{\varepsilon}(\mathbf{s},g) $ to the identity $ g=e\in G(\mathbb{A}_{\mathbb{Q}}) $:
\begin{equation*}
\mathsf{Z}_{\varepsilon}(\mathbf{s})=\sum_{\gamma\in G(\mathbb{Q})}\delta_{\varepsilon}(\gamma)\mathsf{H}(\mathbf{s},\gamma )^{-1}.
\end{equation*}
We consider the representation theory of the Heisenberg group in the adelic setting \cite{ST04} and the spectral decomposition of a certain representation space gives:
\begin{equation*}
\mathsf{Z}_{\varepsilon}(\mathbf{s})=\mathsf{Z}_{0,\varepsilon}(\mathbf{s})+\mathsf{Z}_{1,\varepsilon}(\mathbf{s})+\mathsf{Z}_{2,\varepsilon}(\mathbf{s}),
\end{equation*}
where 
\begin{equation*}
\mathsf{Z}_{0,\varepsilon}(\mathbf{s})=\int_{G(\mathbb{A}_{\mathbb{Q}})}\mathsf{H}(\mathbf{s},g)^{-1}\delta_{\varepsilon}(g)\mathrm{d}g,
\end{equation*}
\begin{equation*}
\mathsf{Z}_{1,\varepsilon}(\mathbf{s})=\sum_{\eta}\int_{G(\mathbb{A}_{\mathbb{Q}})}\mathsf{H}(\mathbf{s},g)^{-1}\overline{\eta}(g)\delta_{\varepsilon}(g)\mathrm{d}g,
\end{equation*}
\begin{equation*}
\mathsf{Z}_{2,\varepsilon}(\mathbf{s})=\sum_{\psi}\sum_{\omega^{\psi}}\omega^{\psi}(e)\int_{G(\mathbb{A}_{\mathbb{Q}})}\mathsf{H}(\mathbf{s},g)^{-1}\overline{\omega}^{\psi}(g)\delta_{\varepsilon}(g)\mathrm{d}g.
\end{equation*}
For details of the above spectral decomposition see Lemma \ref{lemma decomposition}. We obtain a meromorphic continuation of the function $ \mathsf{Z}_{\varepsilon}(\mathbf{s}) $ and then apply Tauberian theorem to derive our results. The treatment of $ \mathsf{Z}_{0,\varepsilon}(\mathbf{s}) $ and $ \mathsf{Z}_{1,\varepsilon}(\mathbf{s}) $ is essentially analogous to that of \cite{PSTVA19}. For $ \mathsf{Z}_{2,\varepsilon}(\mathbf{s}) $, we will use the theta distribution and Schwartz-Bruhat functions described in \cite[\S 3.7]{ST04} as a tool to compute and estimate it.\par 
In this paper for simplicity we only consider the Heisenberg group over $ \mathbb{Q} $. We hope in the future to treat general unipotent groups over an arbitrary number field using the orbit method developed by Shalika-Tschinkel \cite{ST15}.
\subsection{Acknowledgement}
I would like to thank my advisor Prof. Sho Tanimoto for valuable discussion on the paper. I am grateful to Prof. Marta Pieropan and Prof. Anthony V\'{a}rilly-Alvarado for their comments on an earlier draft. I thank the referee for detailed and helpful comments. The author is partially supported by China Scholarship Council.

\section{Geometry of biequivariant compactifications of the Heisenberg group}
In this section we recall some basic facts on the geometry of biequivariant compactifications of the Heisenberg group from Shalika and Tschinkel \cite{ST04}. Hereafter we suppose $ F=\mathbb{Q} $. We denote the points of the Heisenberg group $ G $ by
\begin{equation*}
g=g(x,z,y)=\begin{pmatrix}
1 & x &z\\ 
0& 1 & y\\
0& 0 & 1
\end{pmatrix}
\end{equation*}
where $ x,y,z $ are coordinates for a 3-dimensional affine space. Let $ X $ be a smooth projective  biequivariant compactification of $ G $ with boundary $ D=X\backslash G $ consisting of irreducible components $ D=\cup_{\alpha\in \mathcal{A}}D_{\alpha} $, with strict normal crossings. We write the anticanonical divisor of $ X $ as $ -K_{X}=\sum_{\alpha\in \mathcal{A}}\kappa_{\alpha}D_{\alpha} $, denote the cone of effective divisors on $ X $ by $ \Lambda_{\mathrm{eff}}(X) $ and denote by $ \Pic(X) $ the Picard group of $ X $.
\begin{prop}[{\cite[Proposition 1.5]{ST04}}]
With the notations above, we have
\begin{enumerate}
\item \begin{equation*}
\Pic(X)=\oplus_{\alpha\in \mathcal{A}}\mathbb{Z}D_{\alpha},
\end{equation*}
\item \begin{equation*}
\Lambda_{\mathrm{eff}}(X)=\oplus_{\alpha\in \mathcal{A}}\mathbb{R}_{\geq 0}D_{\alpha},
\end{equation*}
\item $ \kappa_{\alpha}\geq 2 $ for all $ \alpha\in \mathcal{A} $.
\end{enumerate}
\end{prop}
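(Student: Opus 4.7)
For part (1), I use the open immersion $G \hookrightarrow X$ with boundary $D = \bigcup_{\alpha}D_\alpha$, which yields the standard localization sequence
$$\bigoplus_{\alpha \in \mathcal{A}} \mathbb{Z} D_\alpha \longrightarrow \Pic(X) \longrightarrow \Pic(G) \longrightarrow 0.$$
Since $G \cong \mathbb{A}^3$ as an algebraic variety, $\Pic(G)=0$, so the left map is surjective. For injectivity, any relation $\sum c_\alpha D_\alpha = \operatorname{div}(f)$ forces $f$ to be a regular nowhere-vanishing function on $G \cong \mathbb{A}^3$, so $f \in \mathbb{Q}^{\times}$ and $\operatorname{div}(f)=0$, whence all $c_\alpha = 0$.

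For part (2), let $E$ be an effective divisor on $X$. Then $H^0(X, \mathcal{O}(E))$ is a finite-dimensional representation of the unipotent algebraic group $G \times G$ under the biequivariant action. By Lie--Kolchin there is a nonzero fixed section $s \in H^0(X, \mathcal{O}(E))$; its divisor $\operatorname{div}(s) \sim E$ is $G \times G$-invariant, hence supported outside the open orbit $G$, i.e.\ on $D$. Therefore $[E]$ is a non-negative integral combination of the $[D_\alpha]$, which (together with the obvious inclusion $\bigoplus_\alpha \mathbb{R}_{\geq 0} D_\alpha \subset \Lambda_{\mathrm{eff}}(X)$) gives (2).

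For part (3), the plan is to pair $-K_X$ with a well-chosen rational orbit curve. Fix $\alpha \in \mathcal{A}$, pick a generic vector $v \in \mathrm{Lie}(G)\setminus\{0\}$ and a translate $(g_1,g_2) \in G \times G$ so that the Zariski closure
$$C := \overline{\{g_1 \exp(tv) g_2 : t \in \mathbb{A}^1\}} \subset X$$
is a smooth rational curve (hence $C \cong \mathbb{P}^1$) meeting the boundary $D$ transversely at a single smooth point of $D_\alpha$ and of no other $D_\beta$. Then $-K_X \cdot C = \sum_\beta \kappa_\beta (D_\beta \cdot C) = \kappa_\alpha$. The biequivariant action supplies three left- and three right-invariant global vector fields on $X$, which jointly span $T_pX$ at every $p \in G$; their projections to $N_{C/X}$ are global sections that generate this rank-$2$ bundle at a generic point of $C$. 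A generically generated vector bundle on $\mathbb{P}^1$ is non-negative, so $\deg \det N_{C/X} \geq 0$. Adjunction then yields
$$\kappa_\alpha = -K_X \cdot C = \deg(-K_{\mathbb{P}^1}) + \deg \det N_{C/X} = 2 + \deg \det N_{C/X} \geq 2.$$

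The main obstacle is the geometric input of part (3): for each boundary divisor $D_\alpha$, one must produce a translated one-parameter subgroup closure meeting $D_\alpha$ (and only $D_\alpha$) transversely at a single smooth point. I expect this from a dimension count on the $G \times G$-orbit stratification of $X$ together with genericity in $v$ and $(g_1,g_2)$, but the argument needs care when $D_\alpha$ lies in the closure of a lower-dimensional orbit stratum or is stabilized in a non-generic way.
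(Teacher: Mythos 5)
The paper does not prove this proposition; it is quoted from \cite[Proposition 1.5]{ST04}, so there is no proof in the paper itself to compare against. I therefore evaluate your proposal on its own and against the argument in the cited reference (which follows Hassett--Tschinkel for vector groups).

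Parts (1) and (2) of your proposal are correct and standard: (1) is the excision sequence for $\operatorname{Pic}$ together with $\operatorname{Pic}(\mathbb{A}^3)=0$, and (2) is the unipotence argument producing a $G\times G$-invariant section whose divisor is supported on the boundary.

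For (3), your overall strategy --- produce a rational curve $C$ meeting only $D_\alpha$, with multiplicity one, and bound $-K_X\cdot C\ge 2$ using $T_{\mathbb{P}^1}\hookrightarrow TX|_C$ together with generic global generation of the normal sheaf by the invariant vector fields --- is the right one and, I believe, matches the reference in spirit. The positivity step is fine. However, the step you flag as the ``main obstacle'' is indeed where the whole content of the proposition lies, and your prescription ``pick a generic vector $v\in\operatorname{Lie}(G)\setminus\{0\}$'' is actually wrong. For the Heisenberg group one has
\[
\exp(tv)=g\bigl(ta,\ tc+\tfrac{t^{2}}{2}ab,\ tb\bigr)\qquad\text{for } v=(a,b,c),
\]
so for a \emph{generic} $v$ (i.e.\ $ab\neq 0$) the orbit closure in $\mathbb{P}^3$ is a smooth conic lying in the plane $\{bx-ay=0\}$ and tangent to the boundary hyperplane at the unique $G\times G$-fixed point $[0:0:0:1]$; translating by $(g_1,g_2)$ does not change this. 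Hence $D\cdot C=2$, so $-K_X\cdot C=2\kappa_\alpha$, and your inequality $-K_X\cdot C\ge 2$ yields only $\kappa_\alpha\ge 1$. To get a line (multiplicity one) you must take $v$ in the degenerate locus $ab=0$, where $\exp(tv)$ is affine-linear in $t$. In general, identifying which one-parameter translates produce a curve meeting a \emph{prescribed} $D_\alpha$ with intersection number exactly one requires a genuine analysis of the biequivariant orbit stratification of $X$ and cannot be dispatched by genericity in $\operatorname{Lie}(G)$; this is precisely the geometric input of \cite[Proposition 1.5]{ST04} that your outline leaves open.
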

\begin{coro}[{\cite[Corollary 1.7]{ST04}}]\label{coro divisor}
Let $ F[G] $ be the coordinate ring of $ G $ over $ F $. The divisor of every non-constant function $ f\in F[G] $ can be written as
\begin{equation*}
\mathrm{div}(f) = E(f)-\sum_{\alpha\in \mathcal{A}}d_{\alpha}(f)D_{\alpha}
\end{equation*}
where $ E(f) $ is the divisor of $ \{f=0\} $ in $ G $ and $ d_{\alpha}(f)\geq 0 $ for all $ \alpha\in \mathcal{A} $. Moreover, there is at least one $ \alpha\in \mathcal{A} $ such that $ d_{\alpha}(f)>0 $.
\end{coro}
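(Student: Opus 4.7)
The plan is to obtain the decomposition of $\mathrm{div}(f)$ as a combination of an effective ``interior'' part and boundary components, by working in $\Pic(X)$ and exploiting the very special description of $\Pic(X)$ and $\Lambda_{\mathrm{eff}}(X)$ provided by the preceding Proposition 1.5.

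First, I would view $f \in F[G]$ as a nonzero rational function on $X$ and consider its principal divisor on $X$. Since $f$ is regular on the open subset $G \subset X$, the function $f$ has no poles on $G$, and in fact the restriction of $\mathrm{div}_X(f)$ to $G$ is precisely $E(f)$. Hence one may write
\begin{equation*}
\mathrm{div}_X(f) \;=\; \overline{E(f)} \;+\; \sum_{\alpha\in \mathcal{A}} m_\alpha D_\alpha,
\end{equation*}
for some integers $m_\alpha \in \mathbb{Z}$, where $\overline{E(f)}$ denotes the Zariski closure of $E(f)$ in $X$ (which carries the same multiplicities as $E(f)$ along each prime component in $G$).

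Next I would determine the class $[\overline{E(f)}]$ in $\Pic(X)$. Because $\overline{E(f)}$ is effective, its class lies in the pseudo-effective cone $\Lambda_{\mathrm{eff}}(X)$. By Proposition 1.5(1)--(2), $\Pic(X) = \bigoplus_{\alpha} \mathbb{Z}\,[D_\alpha]$ and $\Lambda_{\mathrm{eff}}(X) = \bigoplus_{\alpha} \mathbb{R}_{\geq 0}\,[D_\alpha]$, where the $[D_\alpha]$ form a $\mathbb{Z}$-basis of $\Pic(X)$. Consequently one can write
\begin{equation*}
[\overline{E(f)}] \;=\; \sum_{\alpha\in \mathcal{A}} d_\alpha(f)\,[D_\alpha], \qquad d_\alpha(f)\in \mathbb{Z}_{\geq 0},
\end{equation*}
the non-negativity following from effectivity combined with the fact that the $[D_\alpha]$ are both a $\mathbb{Z}$-basis and the extremal rays of $\Lambda_{\mathrm{eff}}(X)$.

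Now I would use that $\mathrm{div}_X(f)$ is principal, hence trivial in $\Pic(X)$. Substituting the expression for $[\overline{E(f)}]$ into $[\mathrm{div}_X(f)] = 0$ and using that the $[D_\alpha]$ are linearly independent, one gets $m_\alpha = -d_\alpha(f)$ for each $\alpha$. Hence
\begin{equation*}
\mathrm{div}(f) \;=\; E(f) \;-\; \sum_{\alpha\in\mathcal{A}} d_\alpha(f)\, D_\alpha
\end{equation*}
with $d_\alpha(f) \geq 0$, where I identify $\overline{E(f)}$ with $E(f)$ via the obvious convention. Finally, for the strict positivity of at least one $d_\alpha(f)$: since $X$ is projective, $H^0(X,\mathcal{O}_X) = F$, so a non-constant $f$ cannot be regular on all of $X$ and must acquire a pole somewhere; being regular on $G$, such a pole must lie on the boundary $D$, forcing some $d_\alpha(f) > 0$.

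The only mildly delicate step is the integrality of the coefficients $d_\alpha(f)$, but this is forced precisely because the $[D_\alpha]$ are simultaneously a $\mathbb{Z}$-basis of $\Pic(X)$ and the generators of $\Lambda_{\mathrm{eff}}(X)$; every other step is essentially bookkeeping. Thus the corollary reduces almost entirely to the structural result already recorded as Proposition 1.5.
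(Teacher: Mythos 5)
Your proof is correct and follows essentially the same route as the source result in Shalika--Tschinkel: write $\mathrm{div}_X(f)$ as the closure of $E(f)$ plus a boundary part, then use the facts that the $[D_\alpha]$ form a $\mathbb{Z}$-basis of $\Pic(X)$ and span the effective cone (Proposition 1.5) to force the boundary coefficients to be nonpositive, and use $H^0(X,\mathcal{O}_X)=F$ to get strict positivity of some $d_\alpha(f)$. One small remark: the integrality of the $d_\alpha(f)$ is already automatic from $\mathrm{div}_X(f)$ being a Weil divisor on a smooth variety (they are the orders of $f$ along the $D_\alpha$), so the $\mathbb{Z}$-basis property is only needed for bookkeeping, while the effective-cone description supplies the sign.
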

\begin{nota}\label{notations}
Let $ \Pic(X)_{\mathbb{C}}:=\Pic(X)\otimes_{\mathbb{Z}}\mathbb{C} $. We introduce coordinates on $ \Pic(X)_{\mathbb{C}} $ using the basis $ (D_{\alpha})_{\alpha\in \mathcal{A}} $: a vector $ \mathbf{s}=(s_{\alpha})_{\alpha\in \mathcal{A}} $ corresponds to $ \sum_{\alpha\in \mathcal{A}}s_{\alpha}D_{\alpha} $.
\end{nota}

\section{Height zeta functions and representation theory of the Heisenberg group}
In this section we recall some basic properties of Height zeta functions and review the representation theory of the  Heisenberg group in the adelic setting. Let $ G $ be the Heisenberg group over $ \mathbb{Q} $ and let $ X $ be a smooth projective biequivariant compactification of $ G $ defined over $ \mathbb{Q} $. This section is based on Shalika-Tschinkel \cite{ST04}.
\subsection{Height functions}\label{height pairing}
Let us consider the decomposition of the boundary into irreducible components:
\begin{equation*}
D=X\backslash G=\bigcup_{\alpha\in \mathcal{A}}D_{\alpha}.
\end{equation*}
We fix a smooth adelic metrization $ (\Vert \cdot \Vert_{v})_{v\in \Val({\mathbb{Q}})} $ for each line bundle $ \mathcal{O}(D_{\alpha}) $. Let $ \mathsf{f}_{\alpha} $ be a global section of $ \mathcal{O}(D_{\alpha}) $ corresponding to $ D_{\alpha} $. For each place $ v $, the local height pairing is defined by
\begin{equation*}
\mathsf{H}_{v}: G({\mathbb{Q}}_{v})\times \Pic (X)_{\mathbb{C}}\rightarrow \mathbb{C}^{\times},\;\left(g_{v}, \sum_{\alpha\in \mathcal{A}}s_{\alpha}D_{\alpha}\right)\mapsto \prod_{\alpha\in \mathcal{A}}\Vert \mathsf{f}_{\alpha}(g_{v}) \Vert_{v}^{-s_{\alpha}},
\end{equation*}
and the global height pairing is
\begin{equation*}
\mathsf{H}=\prod_{v\in \Val({\mathbb{Q}})}\mathsf{H}_{v}: G({\mathbb{A}_{\mathbb{Q}}})\times \Pic (X)_{\mathbb{C}}\rightarrow \mathbb{C}^{\times}.
\end{equation*}
We denote by $ \Val(\mathbb{Q})_{\mathrm{fin}} $ the set of all nonarchimedean places of $ \mathbb{Q} $ and by $ \mathbb{A}_{\mathrm{fin}} $ the restricted product $ \prod'_{v}\mathbb{Q}_{v} $ with respect to $ \mathbb{Z}_{v} $ where $ v\in\Val(\mathbb{Q})_{\mathrm{fin}}  $ and $ \mathbb{Q}_{v} $ is the completion of $ \mathbb{Q} $ at $ v $.
\begin{lemm}[{\cite[Proposition 2.3]{ST04}},{\cite[Lemma 6.1]{PSTVA19}}]\label{lemma good reduction}
With the notations above, the height pairing satisfies the following relation:
\begin{equation*}
\mathsf{H}(g,\mathbf{s}+\mathbf{s}')=\mathsf{H}(g,\mathbf{s})\mathsf{H}(g,\mathbf{s}')
\end{equation*}
for all $ \mathbf{s},\mathbf{s}'\in \Pic (X)_{\mathbb{C}}$, all $ g\in G({\mathbb{A}_{\mathbb{Q}}}) $ and there is a compact open subgroup 
\begin{equation*}
\mathsf{K}=\prod_{v}\mathsf{K}_{v}\subset G(\mathbb{A}_{\mathrm{fin}})
\end{equation*}
such that for all $ v\in \Val(\mathbb{Q})_{\mathrm{fin}} $, one has $ \mathsf{H}_{v}(k_{v}g_{v}k'_{v},\mathbf{s})=\mathsf{H}_{v}(g_{v},\mathbf{s}) $ for all $ \mathbf{s}\in \Pic (X)_{\mathbb{C}}$, $ k_{v},k'_{v}\in \mathsf{K}_{v} $ and $ g_{v}\in G({\mathbb{Q}}_{v}) $.\par 
Moreover, if
\begin{enumerate}
\item there is a smooth projective $ \mathbb{Z}_{v} $-model $ \mathcal{X} $ for $ X $ which comes equipped with both left and right actions of the $ \mathbb{Z}_{v} $-group scheme $ G_{\mathbb{Z}_{v}} $ extending the given actions of $ G $ on $ X $; 
\item the metric $ \Vert \cdot \Vert_{v} $ is induced by the integral model $ (\mathcal{X}, \mathcal{D}) $;
\item the unique linearisation on $ \mathcal{O}(D_{\alpha}) $ extends to $ \mathcal{O}(\mathcal{D}_{\alpha}) $ for every $ \alpha\in \mathcal{A} $,
\end{enumerate}
then we can take $ \mathsf{K}_{v}=G(\mathbb{Z}_{v}) $.\par
In particular we can take
\begin{equation*}
\mathsf{K}=\prod_{p\not \in S'}G(\mathbb{Z}_{p})\cdot \prod_{p\in S'}G(p^{n_{p}}\mathbb{Z}_{p})
\end{equation*}
where $ S' $ is a finite set of primes and $ n_{p} $ are positive integers.
\end{lemm}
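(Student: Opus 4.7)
Multiplicativity is immediate from the definition: with $\mathbf{s}=\sum_\alpha s_\alpha D_\alpha$, each factor $\|\mathsf{f}_\alpha(g_v)\|_v^{-s_\alpha}$ depends on $s_\alpha$ by an exponential law, so $\mathsf{H}_v(g_v,\mathbf{s}+\mathbf{s}')=\mathsf{H}_v(g_v,\mathbf{s})\mathsf{H}_v(g_v,\mathbf{s}')$, and the global statement follows by taking the product over all places.

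The heart of the lemma is the ``in particular'' clause giving $\mathsf{K}_v=G(\mathbb{Z}_v)$ at places of good reduction. Two group-theoretic facts underlie it. First, because $G$ is connected, it acts trivially on the finite set of boundary components under both the left and the right actions, so each $D_\alpha$ is $G\times G$-stable. Second, $G$ is unipotent and hence has no non-trivial characters; combined with the first fact, this forces the unique $G\times G$-linearisation on $\mathcal{O}(D_\alpha)$ to fix the canonical section $\mathsf{f}_\alpha$ on the nose, i.e.\ $(k,k')^*\mathsf{f}_\alpha=\mathsf{f}_\alpha$. Hypotheses (1)--(3) guarantee that the same linearisation extends to the integral line bundle $\mathcal{O}(\mathcal{D}_\alpha)$ and that the metric is induced by the integral model, whence $\|\mathsf{f}_\alpha(k_vg_vk'_v)\|_v=\|\mathsf{f}_\alpha(g_v)\|_v$ for $k_v,k'_v\in G(\mathbb{Z}_v)$, yielding $G(\mathbb{Z}_v)$-biinvariance place by place.

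For the general assertion, I would observe that a smooth adelic metrization satisfies conditions (1)--(3) at all but finitely many places; let $S'$ be this exceptional set. At places outside $S'$ the previous argument gives $\mathsf{K}_p=G(\mathbb{Z}_p)$. At each of the finitely many $p\in S'$, I would choose $\mathsf{K}_p=G(p^{n_p}\mathbb{Z}_p)$ with $n_p$ determined by a compactness-plus-continuity argument: cover $X(\mathbb{Q}_p)$ by finitely many trivialising charts for every $\mathcal{O}(D_\alpha)$, on each of which the local norm is a continuous positive function; uniform continuity on compact pieces, together with the fact that the congruence subgroups $G(p^{n_p}\mathbb{Z}_p)$ shrink to $\{e\}$ as $n_p\to\infty$, produces a single $n_p$ under which each translated point $k_pg_pk'_p$ sits in the same chart with unchanged norm.

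The main obstacle is precisely the uniformity of this last step: continuity of $\mathsf{H}_p$ alone gives only pointwise-dependent neighborhoods, so one must combine compactness of $X(\mathbb{Q}_p)$ with a careful treatment near the boundary, where the height diverges, in order to extract a single open subgroup that works for every $g_p\in G(\mathbb{Q}_p)$ at once. This is the technical core of the proof, and it is where the ``smoothness'' of the adelic metric (rather than mere continuity) is essential, so that one can meaningfully localise the analysis to finitely many trivialising charts and apply uniform continuity on each.
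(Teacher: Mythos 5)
The paper does not prove this lemma; it cites \cite[Proposition 2.3]{ST04} and \cite[Lemma 6.1]{PSTVA19}, and your outline follows exactly the argument in those sources. The three ingredients you identify — multiplicativity by definition; absence of characters on the unipotent group $G$ forcing $\mathsf{f}_\alpha$ to be fixed by the unique $G\times G$-linearisation, which combined with the integral-model hypotheses gives $G(\mathbb{Z}_v)$-biinvariance at good places; and local constancy of a smooth $v$-adic metric plus compactness of $X(\mathbb{Q}_p)$ at the finitely many remaining places — are precisely the content of the cited proofs. One terminological caution: in the nonarchimedean setting the correct notion is local constancy of the metric in trivialising charts, not ``uniform continuity''; it is local constancy, covering $X(\mathbb{Q}_p)$ by finitely many compact open sets on which the trivialised metric is constant and then shrinking $\mathsf{K}_p$ so that the $G\times G$-action preserves each such set, that yields a single $n_p$. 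Your closing paragraph flags this uniformity as an unresolved obstacle, but in fact it is exactly what local constancy buys; once one phrases it that way, the compactness argument closes cleanly.
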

The height zeta function of $ G(\mathbb{Q})_{\varepsilon} $ associated to the height pairing is defined to be 
\begin{equation}
\mathsf{Z}_{\varepsilon}(\mathbf{s},g)=\sum_{\gamma\in G(\mathbb{Q})}\delta_{\varepsilon}(\gamma g)\mathsf{H}(\mathbf{s},\gamma g)^{-1}.
\end{equation}
where $ g\in G(\mathbb{A}_{\mathbb{Q}}) $. As we stated in the Introduction we shall use the following restriction of $ \mathsf{Z}_{\varepsilon}(\mathbf{s},g) $:
\begin{equation}
\mathsf{Z}_{\varepsilon}(\mathbf{s})=\sum_{\gamma\in G(\mathbb{Q})}\delta_{\varepsilon}(\gamma)\mathsf{H}(\mathbf{s},\gamma )^{-1}.
\end{equation}

\subsection{Representation theory of the Heisenberg group}\label{rep of heisen}
Here we recall the representation theory of the Heisenberg group $ G $ from Shalika and Tschinkel \cite[\S 3]{ST04}.\par
First of all we introduce some necessary notations. We denote by $ \mathrm{Z} $ the center of $ G $ and by $ G^{ab}=G/ \mathrm{Z} $ the abelianization of $ G $. We define the subgroups
\begin{equation*}
\mathrm{U}:=\{u\in G\vert u=g(0,z,y)\},
\end{equation*}
and
\begin{equation*}
\mathrm{W}:=\{w\in G\vert w=g(x,0,0)\}.
\end{equation*}
For the compact open subgroup
\begin{equation*}
\mathsf{K}:=\prod_{p\not \in S'}G(\mathbb{Z}_{p})\cdot \prod_{p\in S'}G(p^{n_{p}}\mathbb{Z}_{p}),
\end{equation*}
we put
\begin{equation}\label{nk}
n(\mathsf{K})=\prod_{p\in S'}p^{n_{p}},
\end{equation}
and denote
\begin{equation*}
\mathsf{K}_{\mathrm{Z}}:=\mathsf{K}\cap \mathrm{Z},
\end{equation*}
\begin{equation*}
\mathsf{K}^{ab}:=\mathsf{K}/\mathsf{K}_{\mathrm{Z}}.
\end{equation*}
Let $ v $ be a place of $ \mathbb{Q} $, through the paper we define the local zeta function by
\begin{equation*}
\zeta_{\mathbb{Q}_{v}}(s)=\begin{cases}
s^{-1}& \text{if}\: \mathbb{Q}_{v}=\mathbb{R},\\ \dfrac{1}{1-p^{-s}} & \text{if}\: v=p\: \text{is nonarchimedean}.
\end{cases} 
\end{equation*}
\begin{nota}
Through the paper, for $ \mathbf{s}=(s_{1},\cdots ,s_{n})\in \mathbb{C}^{n} $ and $ c\in \mathbb{R} $, by $ \Re(\mathbf{s})>c $ we mean that $ \Re(s_{i})>c $ for all $ i\in \{1,\cdots ,n\} $. We denote $ \Vert\mathbf{s}\Vert:=\sqrt{\vert s_{1}\vert^{2}+\cdots +\vert s_{n}\vert^{2}} $. For $ c\in \mathbb{R} $ we denote the tube domain
\begin{equation}
\mathsf{T}_{>c}=\{\mathbf{s}\in \Pic(X)_{\mathbb{C}}: \Re(s_{\alpha})>\kappa_{\alpha}-\varepsilon_{\alpha}+c,  \alpha\in \mathcal{A} \}.
\end{equation}
\end{nota}
We denote the Haar measure on $ G(\mathbb{A}_{\mathbb{Q}}) $ by $ \mathrm{d}g=\prod_{p}\mathrm{d}g_{p}\cdot \mathrm{d}g_{\infty} $ where $ \mathrm{d}g_{\infty}=\mathrm{d}x_{\infty}\mathrm{d}y_{\infty}\mathrm{d}z_{\infty} $  and $ \mathrm{d}g_{p}=\mathrm{d}x_{p}\mathrm{d}y_{p}\mathrm{d}z_{p} $ with the normalizations $ \int_{\mathbb{Z}_{p}}\mathrm{d}x_{p}=1, \int_{\mathbb{Z}_{p}}\mathrm{d}y_{p}=1, \int_{\mathbb{Z}_{p}}\mathrm{d}z_{p}=1 $ and $ \mathrm{d}x_{\infty}, \mathrm{d}y_{\infty}, \mathrm{d}z_{\infty} $ are usual Lebesgue measures on $ \mathbb{R} $. We write $ \mathrm{d}u_{p}=\mathrm{d}y_{p}\mathrm{d}z_{p} $ (resp. $ \mathrm{d}u_{\infty},\mathrm{d}u) $ for the Haar measure on $ \mathrm{U}(\mathbb{Q}_{p}) $ (resp. $ \mathrm{U}(\mathbb{R}), \mathrm{U}(\mathbb{A}_{\mathbb{Q}}) $) and $ \mathrm{d}k_{p} $ for the Haar measure on $ \mathsf{K}_{p} $ with the normalization $ \int_{\mathsf{K}_{p}}\mathrm{d}k_{p}=1 $.\par
Let $ \varrho $ be the right regular representation of $ G(\mathbb{A}_{\mathbb{Q}}) $ on the Hilbert space \cite{Howe}
\begin{equation*}
\mathcal{H}:=\mathsf{L}^{2}\left(G(\mathbb{Q})\backslash G(\mathbb{A}_{\mathbb{Q}})\right).
\end{equation*}
By Peter-Weyl theorem there is a decomposition
\begin{equation*}
\mathcal{H}=\oplus \mathcal{H}_{\psi}
\end{equation*}
where
\begin{equation*}
\mathcal{H}_{\psi}=\{\varphi\in \mathcal{H}|\varrho(z)(\varphi)(g)=\psi(z)\varphi(g)\:\text{for all}\: z\in\mathrm{Z}(\mathbb{A}_{\mathbb{Q}}),g\in G(\mathbb{A}_{\mathbb{Q}})\},
\end{equation*}
and $ \psi $ runs over the set of unitary characters of $ \mathrm{Z}(\mathbb{A}_{\mathbb{Q}}) $ which are trivial on $ \mathrm{Z}(\mathbb{Q}) $. For nontrivial $ \psi $, the corresponding representation $ (\varrho_{\psi},\mathcal{H}_{\psi}) $ of $ G(\mathbb{A}_{\mathbb{Q}}) $ is nontrivial, irreducible and unitary. When $ \psi $ is the trivial character, the corresponding representation $ \varrho_{0} $ decomposes  as a direct sum of one dimensional representations $ \varrho_{\eta} $
\begin{equation*}
\mathcal{H}_{0}=\oplus_{\eta} \mathcal{H}_{\eta}
\end{equation*}
where $ \eta $ runs over all unitary characters of the group $ G^{ab}(\mathbb{Q})\backslash G^{ab}(\mathbb{A}_{\mathbb{Q}}) $.\par
We consider $ \eta $ as a function on $ G(\mathbb{A}_{\mathbb{Q}}) $, trivial on the $ \mathrm{Z}(\mathbb{A}_{\mathbb{Q}}) $ cosets as follows. Let $ \psi_{1}=\prod_{p}\psi_{1,p}\cdot \psi_{1,\infty} $ be the Tate character \cite{Tate}, and for $ \mathbf{a}=(a_{1},a_{2})\in \mathbb{Q}\oplus \mathbb{Q} $, consider
the corresponding linear form on
\begin{equation*}
G^{ab}(\mathbb{A}_{\mathbb{Q}})=\mathbb{A}_{\mathbb{Q}}\oplus \mathbb{A}_{\mathbb{Q}} 
\end{equation*}
given by
\begin{equation*}
g(x,z,y)\mapsto a_{1}x+a_{2}y.
\end{equation*}
We denote by $ \eta_{\mathbf{a}} $ the corresponding adelic character
\begin{equation*}
\eta_{\mathbf{a}}: g(x,z,y)\mapsto \psi_{1}(a_{1}x+a_{2}y)
\end{equation*}
of $ G(\mathbb{A}_{\mathbb{Q}}) $. For $ a\in \mathbb{Q} $, we denote by $ \psi_{a} $ the adelic character of $ \mathrm{Z}(\mathbb{A}_{\mathbb{Q}}) $ given by
\begin{equation*}
z\mapsto \psi_{1}(az).
\end{equation*}

\begin{lemm}\label{lemma decomposition}
There is a constant $ \delta>0 $ such that for all $ \mathbf{s}\in \mathsf{T}_{>\delta}$, one has
\begin{equation}
\mathsf{Z}_{\varepsilon}(\mathbf{s})=\mathsf{Z}_{0,\varepsilon}(\mathbf{s})+\mathsf{Z}_{1,\varepsilon}(\mathbf{s})+\mathsf{Z}_{2,\varepsilon}(\mathbf{s}),
\end{equation}
where 
\begin{equation}
\mathsf{Z}_{0,\varepsilon}(\mathbf{s})=\int_{G(\mathbb{A}_{\mathbb{Q}})}\mathsf{H}(\mathbf{s},g)^{-1}\delta_{\varepsilon}(g)\mathrm{d}g,
\end{equation}
\begin{equation}
\mathsf{Z}_{1,\varepsilon}(\mathbf{s})=\sum_{\eta_{\mathbf{a}}}\int_{G(\mathbb{A}_{\mathbb{Q}})}\mathsf{H}(\mathbf{s},g)^{-1}\overline{\eta_{\mathbf{a}}}(g)\delta_{\varepsilon}(g)\mathrm{d}g,
\end{equation}
\begin{equation}
\mathsf{Z}_{2,\varepsilon}(\mathbf{s})=\sum_{\psi_{a}}\sum_{\omega^{\psi_{a}}\in \mathcal{B}(\varrho_{\psi_{a}})}\omega^{\psi_{a}}(e)\int_{G(\mathbb{A}_{\mathbb{Q}})}\mathsf{H}(\mathbf{s},g)^{-1}\overline{\omega}^{\psi_{a}}(g)\delta_{\varepsilon}(g)\mathrm{d}g.
\end{equation}
Here $ \eta_{\mathbf{a}} $ ranges over all nontrivial characters of 
\begin{equation*}
G^{ab}(\mathbb{Q})\cdot \mathsf{K}^{ab}\backslash G^{ab}(\mathbb{A}_{\mathbb{Q}}),
\end{equation*}
$ \psi_{a} $ ranges over all nontrivial characters of 
\begin{equation*}
\mathrm{Z}(\mathbb{Q})\cdot \mathsf{K}_\mathrm{Z}\backslash \mathrm{Z}(\mathbb{A}_{\mathbb{Q}}),
\end{equation*}
$ \overline{\eta_{\mathbf{a}}} $ and $ \overline{\omega} $ are complex conjugates of $ \eta_{\mathbf{a}} $ and $ \omega $ respectively, the set $ \mathcal{B}(\varrho_{\psi_{a}}) $ is a fixed complete orthonormal basis of $ \mathcal{H}_{\psi_{a}}^{\mathsf{K}} $, the $ \mathsf{K} $-invariant subspace of $ \mathcal{H}_{\psi_{a}} $. For details on $ \mathcal{B}(\varrho_{\psi_{a}}) $ see \cite[Lemma 3.13]{ST04}.
\end{lemm}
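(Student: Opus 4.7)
The plan is to apply the adelic Peter--Weyl style spectral expansion of Shalika--Tschinkel \cite[\S 3]{ST04} to the Campana-twisted automorphic kernel
\begin{equation*}
\mathsf{Z}_{\varepsilon}(\mathbf{s}, g) = \sum_{\gamma \in G(\mathbb{Q})} \delta_{\varepsilon}(\gamma g)\, \mathsf{H}(\mathbf{s}, \gamma g)^{-1}
\end{equation*}
and then specialize to $g=e$. The only new ingredient compared with \cite{ST04} is the factor $\delta_{\varepsilon}$, and since $0 \le \delta_{\varepsilon} \le 1$, it will play essentially no role at the analytic level: bounds on $\mathsf{H}(\mathbf{s},\cdot)^{-1}$ from \cite{ST04} dominate every integral we need.

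First I would establish analytic foundations: choose $\delta>0$ large enough so that on $\mathsf{T}_{>\delta}$ the series defining $\mathsf{Z}_{\varepsilon}(\mathbf{s},g)$ converges absolutely, is continuous in $g$, and represents an element of $\mathcal{H} = \mathsf{L}^2(G(\mathbb{Q})\backslash G(\mathbb{A}_{\mathbb{Q}}))$. Both $\mathsf{H}(\mathbf{s},\cdot)^{-1}$ and $\delta_{\varepsilon}$ are right $\mathsf{K}$-invariant: $\mathsf{H}$ by Lemma \ref{lemma good reduction}, and $\delta_{\varepsilon}$ because the integrality conditions defining Campana points are detected on $\mathcal{X}(\mathcal{O}_v)$ and are therefore preserved by $\mathsf{K}_v \subset G(\mathbb{Z}_v)$. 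Hence $\mathsf{Z}_{\varepsilon}(\mathbf{s},\cdot) \in \mathcal{H}^{\mathsf{K}}$.

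Next I would expand $\mathsf{Z}_{\varepsilon}(\mathbf{s},\cdot)$ according to the central-character decomposition $\mathcal{H} = \mathcal{H}_0 \oplus \bigoplus_{\psi \ne 1} \mathcal{H}_{\psi}$ and the refinement $\mathcal{H}_0 = \bigoplus_\eta \mathcal{H}_\eta$. The projection onto each summand is computed by pairing with the appropriate orthonormal basis vector: the abelian characters $\eta_{\mathbf{a}}$ on $\mathcal{H}_0$ and the $\mathsf{K}$-fixed basis $\mathcal{B}(\varrho_{\psi_a})$ of $\mathcal{H}_{\psi_a}^{\mathsf{K}}$ from \cite[Lemma 3.13]{ST04} on $\mathcal{H}_{\psi_a}$. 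The $\mathsf{K}$-invariance forces $\eta_{\mathbf{a}}$ to be trivial on $\mathsf{K}^{ab}$ and $\psi_a$ trivial on $\mathsf{K}_{\mathrm{Z}}$, which is exactly the indexing in the claim. The standard unfolding trick — exchanging the $\gamma$-sum with the integral over $G(\mathbb{Q})\backslash G(\mathbb{A}_{\mathbb{Q}})$ — converts each Fourier coefficient into the adelic integral
\begin{equation*}
\int_{G(\mathbb{A}_{\mathbb{Q}})} \mathsf{H}(\mathbf{s},g)^{-1}\, \overline{\omega}(g)\, \delta_{\varepsilon}(g)\, \mathrm{d}g.
\end{equation*}
Evaluating the spectral series at $g=e$ and noting that each $\eta_{\mathbf{a}}(e) = 1$ while $\omega^{\psi_a}(e)$ is retained explicitly, produces the three displayed pieces $\mathsf{Z}_{0,\varepsilon}+\mathsf{Z}_{1,\varepsilon}+\mathsf{Z}_{2,\varepsilon}$.

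The main obstacle is the analytic justification of the term-by-term rearrangements, in particular the exchange of the $\gamma$-summation, the adelic integration, and the double sum over $\psi_a$ and $\omega^{\psi_a}$. This amounts to finding $\delta>0$ for which the Poisson/Plancherel expansion converges absolutely on $\mathsf{T}_{>\delta}$ uniformly in $g$ in a neighborhood of $e$. In the pure rational-point case this is carried out in \cite[\S 3]{ST04} using decay estimates for the local height integrals; because $\delta_{\varepsilon} \le 1$ pointwise, every such estimate is preserved here, so one may take $\delta$ as in \emph{loc.\ cit.} and the convergence argument transfers verbatim. Evaluating at $g=e$ then gives the desired decomposition of $\mathsf{Z}_{\varepsilon}(\mathbf{s})$.
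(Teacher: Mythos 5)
Your proposal is correct and follows essentially the same route as the paper: the paper also twists the automorphic kernel by $\delta_{\varepsilon}$, invokes the spectral decomposition of $\mathsf{L}^2(G(\mathbb{Q})\backslash G(\mathbb{A}_{\mathbb{Q}}))$ from \cite[Proposition 3.3]{ST04}, and then specializes to $g=e$. What you have done is unpack the citation: you make explicit the right-$\mathsf{K}$-invariance of $\delta_{\varepsilon}$ (which the paper records as Lemma \ref{k-invariant}) and the pointwise bound $\delta_{\varepsilon}\le 1$, both of which are precisely what ensures the Shalika--Tschinkel convergence estimates and rearrangement arguments carry over verbatim, so no new idea is introduced and no step is missing.
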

\begin{proof}
Recall that the height zeta function is 
\begin{equation*}
\mathsf{Z}_{\varepsilon}(\mathbf{s},h)=\sum_{\gamma\in G(\mathbb{Q})}\delta_{\varepsilon}(\gamma h)\mathsf{H}(\mathbf{s},\gamma h)^{-1}.
\end{equation*}
It follows from the proof of \cite[Proposition 3.3]{ST04} that there is a spectral decomposition in the space $ \mathsf{L}^{2}\left(G(\mathbb{Q})\backslash G(\mathbb{A}_{\mathbb{Q}})\right) $:
\begin{equation*}
\mathsf{Z}_{\varepsilon}(\mathbf{s},h)=\mathsf{Z}_{0,\varepsilon}(\mathbf{s},h)+\mathsf{Z}_{1,\varepsilon}(\mathbf{s},h)+\mathsf{Z}_{2,\varepsilon}(\mathbf{s},h),
\end{equation*}
where 
\begin{equation*}
\mathsf{Z}_{0,\varepsilon}(\mathbf{s},h)=\int_{G(\mathbb{A}_{\mathbb{Q}})}\mathsf{H}(\mathbf{s},g)^{-1}\delta_{\varepsilon}(g)\mathrm{d}g,
\end{equation*}
\begin{equation}\label{value of eta}
\mathsf{Z}_{1,\varepsilon}(\mathbf{s},h)=\sum_{\eta_{\mathbf{a}}}\eta_{\mathbf{a}}(h)\int_{G(\mathbb{A}_{\mathbb{Q}})}\mathsf{H}(\mathbf{s},g)^{-1}\overline{\eta_{\mathbf{a}}}(g)\delta_{\varepsilon}(g)\mathrm{d}g,
\end{equation}
\begin{equation*}
\mathsf{Z}_{2,\varepsilon}(\mathbf{s},h)=\sum_{\psi_{a}}\sum_{\omega^{\psi_{a}}\in\mathcal{B}(\varrho_{\psi})}\omega^{\psi_{a}}(h)\int_{G(\mathbb{A}_{\mathbb{Q}})}\mathsf{H}(\mathbf{s},g)^{-1}\overline{\omega}^{\psi_{a}}(g)\delta_{\varepsilon}(g)\mathrm{d}g.
\end{equation*}
Now we restrict $ \mathsf{Z}_{\varepsilon}(\mathbf{s},h) $ to the identity $ h=e $ and the resulting function is 
\begin{equation*}
\mathsf{Z}_{\varepsilon}(\mathbf{s})=\sum_{\gamma\in G(\mathbb{Q})}\delta_{\varepsilon}(\gamma)\mathsf{H}(\mathbf{s},\gamma )^{-1}.
\end{equation*}
In (\ref{value of eta}), we have $ \eta_{\mathbf{a}}(h)=\eta_{\mathbf{a}}(e)=1 $. Thus the lemma follows.
\end{proof}
\begin{nota}\label{eta and a}
We denote by $ \Lambda_{1} $ the lattice of $ G^{ab}(\mathbb{Q}) $ consisting of  $ \mathbf{a} $ such that $ \eta_{\mathbf{a}} $ is trivial on $ \mathsf{K}^{ab} $. We also denote by $ \Lambda_{2} $ the lattice of $ \mathrm{Z}(\mathbb{Q}) $ consisting of $ a $ such that $ \psi_{a} $ is trivial on $ \mathsf{K}_\mathrm{Z} $.
\end{nota}
\begin{lemm}\label{k-invariant}
The height function $ \mathsf{H}(\mathbf{s},g) $ and $ \delta_{\varepsilon} $ are bi-$ \mathsf{K} $-invariant, that is,
\begin{equation*}
\mathsf{H}(\mathbf{s},gk)=\mathsf{H}(\mathbf{s},kg)=\mathsf{H}(\mathbf{s},g),
\end{equation*}
\begin{equation*}
\delta_{\varepsilon}(gk)=\delta_{\varepsilon}(kg)=\delta_{\varepsilon}(g),
\end{equation*}
for all $ g\in G $ and $ k\in \mathsf{K} $.
\end{lemm}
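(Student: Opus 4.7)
The statement has two parts: the bi-$\mathsf{K}$-invariance of $\mathsf{H}(\mathbf{s}, \cdot)$ and of $\delta_\varepsilon$. The plan is to dispatch them separately, each by reducing to a product of local statements.

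For the height function, I would simply invoke Lemma \ref{lemma good reduction}. Since $\mathsf{K} = \prod_v \mathsf{K}_v$ lies entirely in the finite adelic part $G(\mathbb{A}_{\mathrm{fin}})$ (its archimedean component being trivial), the asserted identity $\mathsf{H}(\mathbf{s}, kgk') = \mathsf{H}(\mathbf{s}, g)$ for $k, k' \in \mathsf{K}$ follows by multiplying the local identities $\mathsf{H}_v(k_v g_v k'_v, \mathbf{s}) = \mathsf{H}_v(g_v, \mathbf{s})$ over all finite places (at the archimedean place we simply have $k_\infty = k'_\infty = e$). This is essentially a citation; no new work is required.

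For $\delta_\varepsilon$, the plan is to use the factorization $\delta_\varepsilon = \prod_v \delta_{\varepsilon,v}$ and handle each local factor. For $v \in S$ we have $\delta_{\varepsilon,v} \equiv 1$, which is trivially bi-invariant. For $v \notin S$, the local factor $\delta_{\varepsilon,v}$ is determined by the intersection multiplicities $n_v(\mathcal{D}_\alpha, g_v)$ of the induced section $\mathcal{P}_v \in \mathcal{X}(\mathcal{O}_v)$ with the boundary components $\mathcal{D}_\alpha$ on the good integral model. Since $\mathsf{K}_v$ is contained in $G(\mathbb{Z}_v)$ (either equal to it for $v \notin S'$, or a principal congruence subgroup $G(p^{n_p}\mathbb{Z}_p)$ for $v \in S' \setminus S$), the elements $k_v, k'_v$ act on $\mathcal{X}_{\mathcal{O}_v}$ as $\mathcal{O}_v$-automorphisms via the biequivariant $\mathbb{Z}_v$-structure. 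Therefore the plan reduces to showing these automorphisms preserve each $\mathcal{D}_\alpha$ individually.

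The key geometric input is that the boundary divisors $D_\alpha$ are invariant under the $G \times G$-action on $X$: the action permutes the finitely many boundary components, but $G \times G$ is connected, so it must fix each $D_\alpha$ setwise. Since the integral model is taken so that its boundary extends this $G \times G$-equivariant structure, the corresponding $\mathcal{D}_\alpha$ are $G(\mathbb{Z}_v) \times G(\mathbb{Z}_v)$-invariant. Consequently, left- and right-multiplication by $k_v, k'_v$ on $\mathcal{X}(\mathcal{O}_v)$ preserves the pullback ideal of each $\mathcal{D}_\alpha$, hence preserves $n_v(\mathcal{D}_\alpha, \cdot)$, and therefore preserves the defining inequalities of the Campana condition. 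This yields $\delta_{\varepsilon,v}(k_v g_v k'_v) = \delta_{\varepsilon,v}(g_v)$, and taking the product over $v$ completes the proof. The only mildly subtle point is the invariance of individual $D_\alpha$ under $G \times G$, but this is a standard consequence of connectedness of the acting group.
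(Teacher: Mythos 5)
Your overall plan matches the content behind the paper's one-line citation to \cite{ST04} and \cite[Lemma~6.2]{PSTVA19}: treat the height function by quoting Lemma~\ref{lemma good reduction}, and treat $\delta_\varepsilon$ place by place using the model structure. The height-function half is fine, as is the observation that a connected group $G\times G$ fixes each boundary component $D_\alpha$ setwise. However, the step handling $\delta_{\varepsilon,v}$ at the finitely many places $v\in S'\setminus S$ has a genuine gap.

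You write that for $v\in S'\setminus S$ the elements $k_v,k'_v\in G(p^{n_p}\mathbb{Z}_p)$ ``act on $\mathcal{X}_{\mathcal{O}_v}$ as $\mathcal{O}_v$-automorphisms via the biequivariant $\mathbb{Z}_v$-structure.'' This is precisely what cannot be assumed there. In Lemma~\ref{lemma good reduction}, condition~(1) — the existence of a biequivariant $G_{\mathbb{Z}_v}$-action on a smooth projective model — is one of the hypotheses that may \emph{fail} at a place in $S'$; failure of~(1)--(3) is exactly why one shrinks $\mathsf{K}_v$ from $G(\mathbb{Z}_v)$ to a congruence subgroup. Moreover, the definition of a good integral model over $\mathbb{Z}_S$ in the paper requires only flatness, regularity, properness and that the $\mathcal{D}_\alpha$ are Zariski closures; it does not include biequivariance. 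So for $v\in S'\setminus S$ there need be no $G_{\mathbb{Z}_v}$-action on $\mathcal{X}_{\mathbb{Z}_v}$ at all, and the claim that multiplication by $k_v$ descends to an automorphism of the model is unjustified. Your ``only mildly subtle point'' (fixing each $D_\alpha$ by connectedness) is not the hard part; the hard part is that the group action need not extend integrally.

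The correct way to close this is along the lines of what PSTVA19 actually do: the quantity $n_v(\mathcal{D}_\alpha,\cdot)$ is $-\log_p$ of the \emph{model metric} of $\mathsf{f}_\alpha$. The ratio of the model metric to the chosen adelic metric at $v$ is a continuous, nowhere-vanishing function on the compact space $X(\mathbb{Q}_v)$ valued in $p^{\mathbb{Z}}$, hence locally constant; by compactness it is invariant under left and right translation by a sufficiently small compact open subgroup. Combining this with the bi-$\mathsf{K}_v$-invariance of the chosen metric (from Lemma~\ref{lemma good reduction}) gives bi-$\mathsf{K}_v$-invariance of $n_v(\mathcal{D}_\alpha,\cdot)$, hence of $\delta_{\varepsilon,v}$. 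If you instead insist on the biequivariant-model argument, you must first \emph{enlarge} $S$ or $S'$ so that a biequivariant $\mathbb{Z}_v$-model exists outside $S'$, and then verify compatibility with the fixed good integral model that defines the Campana condition; as written, neither of these is done.
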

\begin{proof}
This follows from \cite{ST04} and \cite[Lemma 6.2]{PSTVA19}.
\end{proof}

We are going to compute
\begin{equation*}
\mathsf{Z}_{\varepsilon}(\mathbf{s})=\mathsf{Z}_{0,\varepsilon}(\mathbf{s})+\mathsf{Z}_{1,\varepsilon}(\mathbf{s})+\mathsf{Z}_{2,\varepsilon}(\mathbf{s}).
\end{equation*}
Recall that $ S $ is a finite set of places of $ \mathbb{Q} $ containing the archimedean place such that there is a good integral model $ (\mathcal{X},\mathcal{D}) $  for $ (X,D) $ over the ring of $ S $-integers $ \mathbb{Z}_S $, and hence $ (\mathcal{X},\mathcal{D}_{\varepsilon}) $ is a good integral model for $ (X,D_{\varepsilon}) $ over  $ \mathbb{Z}_S $. We are going to count the Campana $ \bZ_{S} $-points on $ (\mathcal{X}, \cD_{\varepsilon}) $.

\section{Height integrals I}\label{sec1}
In this section, we study the height integral
\begin{equation*}
\mathsf{Z}_{0,\varepsilon}(\mathbf{s})=\int_{G(\mathbb{A}_{\mathbb{Q}})}\mathsf{H}(\mathbf{s},g)^{-1}\delta_{\varepsilon}(g)\mathrm{d}g. 
\end{equation*}
Our analysis here is similar to that of Pieropan et al.~ \cite{PSTVA19}. Let us first set up some notation. For all places $ v $ of $ \mathbb{Q} $, write
\begin{equation*}
D\otimes_{\mathbb{Q}}\mathbb{Q}_{v}=\bigcup_{\beta\in \mathcal{A}_{v}}D_{v,\beta},
\end{equation*}
\begin{equation*}
D_{\alpha}\otimes_{\mathbb{Q}}\mathbb{Q}_{v}=\bigcup_{\beta\in \mathcal{A}_{v}(\alpha)}D_{v,\beta},
\end{equation*}
for the decompositions into irreducible components.\par
Let $ \beta\in \mathcal{A}_{v} $. We denote by $ \mathbb{Q}_{v,\beta} $ the field of definition for one of the geometric irreducible components of $ D_{v,\beta} $  and denote the extension degree $ [\mathbb{Q}_{v,\beta}:\mathbb{Q}_{v}] $ by $ f_{v,\beta} $ .\par
For any subset $ B\subseteq \mathcal{A}_{v} $, define
\begin{equation*}
D_{v,B}:=\bigcap_{\beta\in B}D_{v,\beta},\qquad D_{v,B}^{\circ}:=D_{v,B}\backslash \bigcup_{B\subsetneq B'\subset \mathcal{A}_{v}}\left(\bigcap_{\beta\in B'}D_{v,\beta}\right)
\end{equation*}
where we set that $ D_{v,\varnothing}=X_{\mathbb{Q}_{v}} $ and $ D_{v,\varnothing}^{\circ}=G_{\mathbb{Q}_{v}} $. For $ v\not \in S $, we denote by $ \mathcal{D}_{v,B} $ the Zariski closure of $D_{v,B} $ in $ \mathcal{X}\otimes_{\mathbb{Z}_{S}}\mathbb{Z}_{v} $. We define $ \mathcal{D}_{v,B}^{\circ} $ analogously.\par 
We denote the restricted product $ \prod'_{v\not\in S}\mathbb{Q}_{v} $ by $ \mathbb{A}^{S} $.\par 
Recall that we fix $ S $ being a finite set of places of $ F $ containing all archimedean places and recall from the definition of Campana points that
\begin{equation*}
m_{\alpha}=\dfrac{1}{1-\varepsilon_{\alpha}}.
\end{equation*}
Also recall that we write the anticanonical divisor of $ X $ as $ -K_{X}=\sum_{\alpha\in \mathcal{A}}\kappa_{\alpha}D_{\alpha} $. 
\subsection{Places not in $ S $}
Assume that $ p\not\in S $. We shall consider the integral
\begin{equation}
\int_{G(\mathbb{A}^{S})}\mathsf{H}(\mathbf{s},g)^{-1}\delta_{\varepsilon}(g)\mathrm{d}g=\prod_{p\not\in S}\int_{G(\mathbb{Q}_{p})}\mathsf{H}_{p}(\mathbf{s},g_{p})^{-1}\delta_{\varepsilon,p}(g_{p})\mathrm{d}g_{p},
\end{equation}
and we denote
\begin{equation*}
\mathsf{Z}_{0,\varepsilon,p}(\mathbf{s}):=\int_{G(\mathbb{Q}_{p})}\mathsf{H}_{p}(\mathbf{s},g_{p})^{-1}\delta_{\varepsilon,p}(g_{p})\mathrm{d}g_{p}.
\end{equation*}
\subsubsection{Places of good reduction}\label{sec2}
We assume the integral model $ \mathcal{X} $ has good reduction at $ p $ and the metrics at $ p $ are induced by our integral model. 
\begin{prop}\label{alphabeta}
we have
\begin{equation}\label{eq2.2}
\mathsf{Z}_{0,\varepsilon,p}(\mathbf{s})=\sum_{B\subset \mathcal{A}_{p}}\dfrac{\#\mathcal{D}_{p,B}^{\circ}(\mathbb{F}_{p})}{p^{3-\# B}}\prod_{\beta\in B}\left(1-\dfrac{1}{p}\right)\dfrac{p^{-m_{\alpha(\beta)}(s_{\alpha(\beta)}-\kappa_{\alpha(\beta)}+1)}}{1-p^{-(s_{\alpha(\beta)}-\kappa_{\alpha(\beta)}+1)}}
\end{equation}
where we interpret the term $ p^{-m_{\alpha(\beta)}(s_{\alpha(\beta)}-\kappa_{\alpha(\beta)}+1)}=0 $ if $ \varepsilon_{\alpha(\beta)}=1 $.
\end{prop}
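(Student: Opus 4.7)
The plan is to carry out a standard $p$-adic integration on the smooth integral model, stratifying the domain by the reduction map. Since $\mathcal{X}$ has good reduction at $p$ and $X$ is projective, we identify $G(\mathbb{Q}_p) \subset X(\mathbb{Q}_p) = \mathcal{X}(\mathbb{Z}_p)$, and partition
\begin{equation*}
\mathcal{X}(\mathbb{Z}_p) = \bigsqcup_{B \subset \mathcal{A}_p}\ \bigsqcup_{\bar{x} \in \mathcal{D}_{p,B}^{\circ}(\mathbb{F}_p)} \mathrm{Red}^{-1}(\bar{x}),
\end{equation*}
where $\mathrm{Red}\colon \mathcal{X}(\mathbb{Z}_p)\to\mathcal{X}(\mathbb{F}_p)$ is the reduction map. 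Points in $G(\mathbb{Q}_p)$ reducing to $\mathcal{D}_{p,B}^{\circ}(\mathbb{F}_p)$ are, up to a measure-zero subset, exactly the points with strict intersection multiplicity along $\mathcal{D}_{v,\beta}$ for $\beta\in B$ and trivial intersection with components outside $B$. Since all integrands depend only on these valuations, it suffices to compute the contribution of each stratum and sum.

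Fix $B\subset\mathcal{A}_p$ and $\bar{x}\in\mathcal{D}_{p,B}^{\circ}(\mathbb{F}_p)$, and let $k=\#B$. By the strict normal crossings hypothesis together with smoothness of $\mathcal{X}$ at $\bar{x}$, there exist regular parameters $t_{\beta}$ ($\beta\in B$) cutting out $\mathcal{D}_{v,\beta}$ and complementary parameters $u_1,\ldots,u_{3-k}$ forming a system of étale coordinates on a Zariski neighborhood of $\bar{x}$. The $p$-adic analytic chart identifies $\mathrm{Red}^{-1}(\bar{x})$ (up to a measure-zero set) with $(p\mathbb{Z}_p)^k \times (\bar{u} + p\mathbb{Z}_p)^{3-k}$. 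In these coordinates, the local equation of $\mathcal{D}_{\alpha}$ at $\bar{x}$ is (a unit times) $\prod_{\beta\in B\cap\mathcal{A}_v(\alpha)} t_{\beta}$, so the integral model metric gives $\|\mathsf{f}_{\alpha}(g_p)\|_p = \prod_{\beta\in B\cap\mathcal{A}_v(\alpha)} |t_{\beta}|_p$, whence
\begin{equation*}
\mathsf{H}_p(\mathbf{s},g_p)^{-1} = \prod_{\beta\in B} |t_{\beta}|_p^{s_{\alpha(\beta)}}.
\end{equation*}
Since the left-invariant top form $\omega$ on $G$ extends to a rational section of $K_X$ with $\mathrm{div}(\omega) = -\sum_{\alpha}\kappa_{\alpha}D_{\alpha}$, the Haar measure takes the form
\begin{equation*}
\mathrm{d}g_p = \prod_{\beta\in B}|t_{\beta}|_p^{-\kappa_{\alpha(\beta)}}\,|dt_{\beta}|_p \cdot \prod_{j=1}^{3-k}|du_j|_p
\end{equation*}
on this chart.

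It remains to carry out the integration. The non-boundary directions each contribute $\int_{\bar{u}_j+p\mathbb{Z}_p}|du_j|_p = p^{-1}$, yielding the overall factor $p^{-(3-k)}$. For each boundary direction $\beta\in B$, the Campana indicator $\delta_{\varepsilon,p}$ imposes $v_p(t_\beta)\geq m_{\alpha(\beta)}$ when $\varepsilon_{\alpha(\beta)}<1$, and forbids $v_p(t_\beta)>0$ entirely when $\varepsilon_{\alpha(\beta)}=1$ (consistent with the stated convention $p^{-m_{\alpha(\beta)}(\cdots)}=0$ in that case). Breaking up $\{v_p(t_\beta)=n\}$ which has measure $p^{-n}(1-1/p)$ and where $|t_\beta|^{s_{\alpha(\beta)}-\kappa_{\alpha(\beta)}}=p^{-n(s_{\alpha(\beta)}-\kappa_{\alpha(\beta)})}$ gives the geometric sum
\begin{equation*}
\sum_{n\geq m_{\alpha(\beta)}} \left(1-\tfrac{1}{p}\right) p^{-n(s_{\alpha(\beta)}-\kappa_{\alpha(\beta)}+1)} = \left(1-\tfrac{1}{p}\right)\frac{p^{-m_{\alpha(\beta)}(s_{\alpha(\beta)}-\kappa_{\alpha(\beta)}+1)}}{1-p^{-(s_{\alpha(\beta)}-\kappa_{\alpha(\beta)}+1)}},
\end{equation*}
convergent provided $\Re(s_{\alpha(\beta)})>\kappa_{\alpha(\beta)}-1$, which is implied by the tube domain condition. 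Multiplying these together, summing over $\bar{x}\in\mathcal{D}_{p,B}^{\circ}(\mathbb{F}_p)$ to produce the factor $\#\mathcal{D}_{p,B}^{\circ}(\mathbb{F}_p)$, and then summing over $B\subset\mathcal{A}_p$, yields the claimed formula. The main subtlety is the bookkeeping that matches the pole order $\kappa_{\alpha(\beta)}$ (from $\omega$) with the exponent $s_{\alpha(\beta)}$ (from the metric) so as to produce the shift $s-\kappa+1$ in the denominator, and ensuring the local parameters chosen are compatible across the translation of indices from $\alpha\in\mathcal{A}$ to $\beta\in\mathcal{A}_v(\alpha)$; everything else is a routine geometric series evaluation.
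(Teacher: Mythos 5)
Your proposal is correct and follows essentially the same route as the paper's proof: stratify $G(\mathbb{Q}_p)=\mathcal{X}(\mathbb{Z}_p)$ by the reduction map, use normal crossings coordinates on each fiber $\rho_p^{-1}(\bar x)$, expand the Haar measure via $\mathrm{div}(\omega)=-\sum_\alpha\kappa_\alpha D_\alpha$ (equivalently, the paper writes the integrand as $\mathsf{H}_p(\mathbf{s}-\kappa,\cdot)^{-1}$ against the Tamagawa measure), impose the Campana constraint $v_p(t_\beta)\ge m_{\alpha(\beta)}$ on each boundary coordinate, and evaluate the resulting geometric series. The two write-ups differ only in bookkeeping conventions (you unpack the measure in coordinates, the paper keeps $\mathrm{d}\tau$ and shifts the height exponent), not in substance.
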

\begin{proof}
The proof is analogous to that of \cite[Theorem 7.1]{PSTVA19}. We set $ \mathbf{\kappa}=(\kappa_{\alpha})_{\alpha\in \mathcal{A}} $ and let $ \mathrm{d}\tau $ denote the Tamagawa measure. Consider the reduction map
\begin{equation*}
\rho_{p}: G(\mathbb{Q}_{p})\rightarrow \mathcal{X}(\mathbb{F}_{p}),
\end{equation*}
then by stratification we have
\begin{equation*}
\mathsf{Z}_{0,\varepsilon,p}(\mathbf{s})=\sum_{B\subset \mathcal{A}_{p}}\sum_{y\in \mathcal{D}_{p,B}^{\circ}(\mathbb{F}_{p})}\int_{\rho_{p}^{-1}(y)}\mathsf{H}_{p}(\mathbf{s}-\mathbf{\kappa},g_{p})^{-1}\delta_{\varepsilon,p}(g_{p})\mathrm{d}\tau .
\end{equation*}
We now compute the inner integral
\begin{equation}\label{eqC}
\int_{\rho_{p}^{-1}(y)}\mathsf{H}_{p}(\mathbf{s}-\mathbf{\kappa},g_{p})^{-1}\delta_{\varepsilon,p}(g_{p})\mathrm{d}\tau.
\end{equation}
If $ B=\varnothing $, then we have
\begin{equation*}
\mathsf{H}_{p}(\mathbf{s}-\mathbf{\kappa},g_{p})=\delta_{\varepsilon,p}(g_{p})=1,
\end{equation*}
so (\ref{eqC}) evaluates to $ \frac{1}{p^{3}} $.\par
If $ B\neq \varnothing $, we write $ B=\{\beta_{1},\cdots ,\beta_{\ell}\}(\ell\leq 3) $ and every $ \beta_{i} $ lies above a unique $ \alpha_{i}:=\alpha(\beta_{i})\in \mathcal{A} $. In a neighborhood  of $ \rho_{p}^{-1}(y) $,  there exist analytic local coordinates $ (z_{1},z_{2},z_{3}) $ on $ \rho_{p}^{-1}(y) $ inducing a measure-preserving analytic isomorphism $ \rho_{p}^{-1}(y)\cong \mathfrak{m}_{p}^{3} $. The integral (\ref{eqC}) then can be rewritten as
\begin{equation*}
\int_{\rho_{p}^{-1}(y)}\mathsf{H}_{p}(\mathbf{s}-\mathbf{\kappa},g_{p})^{-1}\delta_{\varepsilon,p}(g_{p})\mathrm{d}\tau  =\int_{\mathfrak{m}_{p}^{3}}\prod_{i=1}^{\ell}(|z_{i}|_{p}^{s_{\alpha_{i}}-\kappa_{\alpha_{i}}}\delta_{\varepsilon,p}(z_{i})\mathrm{d}z_{i})\prod_{i>\ell}\mathrm{d}z_{i}
\end{equation*}
where by definition $ \delta_{\varepsilon,p}(z_{i})=1 $ is equivalent to
\begin{equation*}
\varepsilon_{\alpha_{i}}\neq 1 \quad \mathrm{and}\quad v_{p}(z_{i})\geq m_{i}=\dfrac{1}{1-\varepsilon_{\alpha_{i}}}
\end{equation*}
where $ v_{p}(z_{i}) $ is the $ p $-adic valuation of $ z_{i} $.\par
Thus if $ \Re(s_{\alpha_{i}})-\kappa_{\alpha_{i}}+1>0 $ for all $ i\in \{1,\cdots ,\ell\} $, we have
\begin{eqnarray*}
\int_{\rho_{p}^{-1}(y)}\mathsf{H}_{p}(\mathbf{s}-\mathbf{\kappa},g_{p})^{-1}\delta_{\varepsilon,p}(g_{p})\mathrm{d}\tau  &=&\dfrac{1}{p^{3-\ell}}\prod_{i=1}^{\ell}\sum_{j=m_{i}}^{\infty}p^{-j(s_{\alpha_{i}}-\kappa_{\alpha_{i}})}\Vol(p^{j}\mathbb{Z}_{p}^{\times})\\
&=&\dfrac{1}{p^{3-\ell}}\prod_{i=1}^{\ell}\sum_{j=m_{i}}^{\infty}p^{-j(s_{\alpha_{i}}-\kappa_{\alpha_{i}})}p^{-j}\left(1-\dfrac{1}{p}\right)\\ 
&=&\dfrac{1}{p^{3-\ell}}\prod_{i=1}^{\ell}\left(1-\dfrac{1}{p}\right)\dfrac{p^{-m_{i}(s_{\alpha_{i}}-\kappa_{\alpha_{i}}+1)}}{1-p^{-(s_{\alpha_{i}}-\kappa_{\alpha_{i}}+1)}}.
\end{eqnarray*}
Summing over different subsets of $ \mathcal{A}_{p} $, we have
\begin{equation}
\mathsf{Z}_{0,\varepsilon,p}(\mathbf{s})=\sum_{B\subset \mathcal{A}_{p}}\dfrac{\#\mathcal{D}_{p,B}^{\circ}(\mathbb{F}_{p})}{p^{3-\# B}}\prod_{\beta\in B}\left(1-\dfrac{1}{p}\right)\dfrac{p^{-m_{\alpha(\beta)}(s_{\alpha(\beta)}-\kappa_{\alpha(\beta)}+1)}}{1-p^{-(s_{\alpha(\beta)}-\kappa_{\alpha(\beta)}+1)}},
\end{equation}
where we interpret the term $ p^{-m_{\alpha(\beta)}(s_{\alpha(\beta)}-\kappa_{\alpha(\beta)}+1)}=0 $ if $ \varepsilon_{\alpha(\beta)}=1 $.
\end{proof}

\subsubsection{Places of bad reduction}
Again let $ p\not \in S $. We assume the integral model $ \mathcal{X} $ has bad reduction at $ p $ or the metrics at $ p $ are not induced by our integral model.
\begin{prop}\label{propNN}
The function $ \mathsf{Z}_{0,\varepsilon,p}(\mathbf{s}) $ is holomorphic when $ \Re(s_{\alpha})>\kappa_{\alpha}-1 $ for  $ \alpha\in \mathcal{A} $ with $\varepsilon_{\alpha}<1 $.
\end{prop}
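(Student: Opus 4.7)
The plan is to reduce the bad-reduction case to the good-reduction calculation of Proposition \ref{alphabeta} by passing to a resolution of the integral model over $\mathrm{Spec}\,\mathbb{Z}_p$. By Hironaka's embedded resolution applied to the pair $(\mathcal{X}_{\mathbb{Z}_p},\mathcal{D}_{\mathbb{Z}_p}\cup\mathcal{X}_{\mathbb{F}_p})$, one can find a smooth projective $\mathbb{Z}_p$-scheme $\widetilde{\mathcal{X}}$ with a birational morphism $\pi\colon\widetilde{\mathcal{X}}\to\mathcal{X}_{\mathbb{Z}_p}$ that is an isomorphism on generic fibers and such that $\pi^{-1}(\mathcal{D})_{\mathrm{red}}\cup\widetilde{\mathcal{X}}_{\mathbb{F}_p}$ has strict normal crossings. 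Since any two smooth adelic metrizations of the same line bundle differ by a continuous, nowhere-vanishing, bounded function on the compact space $X(\mathbb{Q}_p)$, one may freely replace $\mathsf{H}_p$ by the height function $\widetilde{\mathsf{H}}_p$ induced by $\widetilde{\mathcal{X}}$: this only alters the integrand by a bounded multiplicative factor, which does not affect holomorphy.

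Next, I would apply the stratification argument of Proposition \ref{alphabeta} verbatim to $\widetilde{\mathcal{X}}$. Write $\pi^{*}\mathcal{D}_\alpha=\sum_\beta m_{\alpha,\beta}\widetilde{\mathcal{E}}_\beta$, where $\widetilde{\mathcal{E}}_\beta$ ranges over the irreducible components of $\pi^{-1}(\mathcal{D})_{\mathrm{red}}\otimes\mathbb{F}_p$. Under the reduction map $\widetilde{\rho}_p\colon G(\mathbb{Q}_p)\to\widetilde{\mathcal{X}}(\mathbb{F}_p)$ associated with the smooth model, the integral decomposes as a finite sum indexed by subsets $B\subset\{\widetilde{\mathcal{E}}_\beta\}$. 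On each stratum, choosing local analytic coordinates so that the components in $B$ are cut out by the vanishing of a subset of the coordinates, the local contribution factors as a product of one-variable $p$-adic integrals
\begin{equation*}
\int_{U_\beta}|z_\beta|_p^{L_\beta(\mathbf{s})-1}\,\mathrm{d}z_\beta,\qquad L_\beta(\mathbf{s})=\sum_{\alpha}m_{\alpha,\beta}(s_\alpha-\kappa_\alpha)+1,
\end{equation*}
where $U_\beta\subset\mathfrak{m}_p$ is cut out by the Campana condition on the original $\mathcal{D}_\alpha$. This condition automatically excludes any stratum meeting a component $\widetilde{\mathcal{E}}_\beta$ with $m_{\alpha,\beta}>0$ for some $\alpha$ satisfying $\varepsilon_\alpha=1$, and restricts $v_p(z_\beta)\geq\max_{\alpha:\varepsilon_\alpha<1,\,m_{\alpha,\beta}>0}\lceil m_\alpha/m_{\alpha,\beta}\rceil$ on the surviving strata.

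The main obstacle is to verify that every such factor is holomorphic throughout the region $\Re(s_\alpha)>\kappa_\alpha-1$ for $\alpha$ with $\varepsilon_\alpha<1$. For components $\widetilde{\mathcal{E}}_\beta$ that are strict transforms of some $\mathcal{D}_\alpha$ with $\varepsilon_\alpha<1$, one has $m_{\alpha,\beta}=1$ and the factor reduces to the geometric series appearing in Proposition \ref{alphabeta}, which converges whenever $\Re(s_\alpha)>\kappa_\alpha-1$. For the exceptional components $\widetilde{\mathcal{E}}_\beta$ that survive the Campana cut-offs, every strictly positive multiplicity $m_{\alpha,\beta}$ corresponds to some $\alpha$ with $\varepsilon_\alpha<1$, and the positivity of $\Re L_\beta(\mathbf{s})$ in the stated region follows from the klt hypothesis on $(X,D_\varepsilon)$ (which forces the log discrepancies of $\pi$ to be strictly greater than $-1$) combined with the bound $\kappa_\alpha\geq 2$ coming from the first proposition of Section 2. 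Since there are only finitely many strata, summing the resulting holomorphic factors gives the desired holomorphy of $\mathsf{Z}_{0,\varepsilon,p}(\mathbf{s})$.
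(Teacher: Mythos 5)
The paper's own proof is a one-line reduction: cite the Igusa-integral convergence lemma (Lemma~4.1 of the ``volume'' reference) with test function $\Phi=\delta_{\varepsilon,p}$. The inputs that make that lemma apply here are that $(X_{\mathbb{Q}_p},D_{\mathbb{Q}_p})$ is already strict normal crossings (inherited from $(X,D)$ over $\mathbb{Q}$), so local analytic coordinates in which the $D_\alpha$ are coordinate hyperplanes exist on $X(\mathbb{Q}_p)$ without resolving anything; that $|\delta_{\varepsilon,p}|\le 1$; and that $\delta_{\varepsilon,p}$ vanishes on a $p$-adic neighborhood of $D_\alpha(\mathbb{Q}_p)$ whenever $\varepsilon_\alpha=1$, which is why no constraint on $s_\alpha$ appears for those $\alpha$.

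Your route through a $\mathbb{Z}_p$-resolution $\pi\colon\widetilde{\mathcal{X}}\to\mathcal{X}_{\mathbb{Z}_p}$ is a genuinely different strategy, but as written it has a gap. The proposition makes no klt assumption --- it feeds through Corollary~\ref{propO} into the dlt case of Theorem~\ref{main thm2} as well --- so the appeal in your final paragraph to ``the klt hypothesis on $(X,D_\varepsilon)$'' is not available. Even if it were, klt of the $\mathbb{Q}$-pair $(X,D_\varepsilon)$ controls discrepancies only of divisors extracted in resolutions over $\mathbb{Q}$; since $(X_{\mathbb{Q}_p},D_{\mathbb{Q}_p})$ is already SNC, all the exceptional divisors $\widetilde{\mathcal{E}}_\beta$ of $\pi$ are \emph{vertical}, i.e.\ supported in the special fibre, and their discrepancies over $\mathcal{X}_{\mathbb{Z}_p}$ are not governed by any hypothesis in the paper. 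Independently, your formula $L_\beta(\mathbf{s})=\sum_\alpha m_{\alpha,\beta}(s_\alpha-\kappa_\alpha)+1$ omits the contribution of the relative canonical $K_{\widetilde{\mathcal{X}}/\mathcal{X}}$ to the pullback of the density $\mathsf{H}_p(\mathbf{s}-\kappa,g_p)^{-1}\,\mathrm{d}\tau$, so the threshold $\Re L_\beta>0$ is not the correct convergence condition on these strata. In fact the verticality observation shows that these strata do not generate one-variable $p$-adic integrals at all: a $\mathbb{Z}_p$-point of the regular model $\widetilde{\mathcal{X}}$ can only reduce to a smooth point of the special fibre, lying on a single vertical component of multiplicity one, whose local equation is a unit times $p$ --- a constant, not a variable of integration --- so the vertical exceptional factors contribute fixed holomorphic powers of $|p|$ and the only convergence to verify is for the horizontal strict-transform coordinates, which is exactly the good-reduction computation. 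Your argument does not establish this and the appeal to klt plus $\kappa_\alpha\geq2$ does not supply a valid substitute.
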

\begin{proof}
We have
\begin{equation*}
\mathsf{Z}_{0,\varepsilon,p}(\mathbf{s})=\int_{G(\mathbb{Q}_{p})}\mathsf{H}_{p}(\mathbf{s},g_{p})^{-1}\delta_{\varepsilon,p}(g_{p})\mathrm{d}g_{p}=\int_{G(\mathbb{Q}_{p})}\prod_{\alpha\in \mathcal{A}}\Vert \mathsf{f}_{\alpha}(g_{p}) \Vert_{p}^{s_{\alpha}}\delta_{\varepsilon,p}(g_{p})\mathrm{d}g_{p}.
\end{equation*}
The proposition then follows from \cite[Lemma 4.1]{volume} by taking $ \Phi=\delta_{\varepsilon,p} $.
\end{proof}

\subsection{Places in $ S $}
Let $ v\in S $. Then $ \delta_{\varepsilon,v}\equiv 1 $ by definition. 
\begin{prop}\label{propN}
\begin{enumerate}
\item The function
\begin{equation*}
\mathsf{Z}_{0,\varepsilon,v}(\mathbf{s}):=\int_{G(\mathbb{Q}_{v})}\mathsf{H}_{v}(\mathbf{s},g_{v})^{-1}\delta_{\varepsilon,v}(g_{v})\mathrm{d}g_{v}
\end{equation*}
is holomorphic when $ \Re(s_{\alpha})>\kappa_{\alpha}-1 $ for all $ \alpha\in \mathcal{A} $.
\item Suppose $ L=\sum_{\alpha\in \mathcal{A}}\lambda_{\alpha}D_{\alpha} $ is a big divisor on $ X $ and let
\begin{equation*}
a:=\tilde{a}((X,D_{\mathrm{red}}),L),\quad b:=b(\mathbb{Q}_{v},(X,D_{\mathrm{red}}),L)
\end{equation*}
be as defined in \cite[\S 4]{PSTVA19}. Then there is a constant $ \delta>0 $ such that the function
\begin{equation*}
s\mapsto \zeta_{\mathbb{Q}_{v}}(s-a)^{-b}\mathsf{Z}_{0,\varepsilon,v}(sL)
\end{equation*}
admits a holomorphic continuation to the domain  $ \Re(s)>a-\delta $ and the function $ s\mapsto \mathsf{Z}_{0,\varepsilon,v}(sL) $ has a pole at $ s=a $ of order $ b $.
\end{enumerate}
\end{prop}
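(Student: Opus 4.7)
The plan is to exploit the fact that $\delta_{\varepsilon,v}\equiv 1$ for $v\in S$ by definition, so that $\mathsf{Z}_{0,\varepsilon,v}(\mathbf{s})$ coincides with the purely geometric local height integral
\[
\int_{G(\mathbb{Q}_v)}\prod_{\alpha\in\mathcal{A}}\Vert\mathsf{f}_\alpha(g_v)\Vert_v^{s_\alpha}\,\mathrm{d}g_v.
\]
This is exactly the type of object analyzed in \cite{ST04} and, in much greater generality, in \cite[\S 4]{PSTVA19}. Both parts should therefore reduce to those results, with the Campana condition playing no active role at $v\in S$.

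For part (1), I would cover the compact space $X(\mathbb{Q}_v)$ by finitely many $v$-adic analytic charts compatible with the strict normal crossings boundary $D$. On a chart meeting the stratum $D_{v,B}^{\circ}$, pick analytic coordinates $z_1,z_2,z_3$ so that each $D_{v,\beta_i}$ with $\beta_i\in B$ is cut out by $z_i=0$. The metrized sections $\mathsf{f}_\alpha$ then equal $|z_i|_v$ up to continuous non-vanishing factors, and the Haar measure agrees with Lebesgue measure up to such a factor as well. A partition of unity then expresses $\mathsf{Z}_{0,\varepsilon,v}(\mathbf{s})$ as a finite sum whose building blocks are products of one-dimensional integrals $\int|z|_v^{s_\alpha-\kappa_\alpha}\,\mathrm{d}z$, each absolutely convergent and holomorphic in $\mathbf{s}$ precisely when $\Re(s_\alpha)>\kappa_\alpha-1$.

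For part (2), I specialize $s_\alpha=s\lambda_\alpha$ in the local description above. Each stratum $D_{v,B}^{\circ}$ then contributes a product over $\beta\in B$ of $\zeta_{\mathbb{Q}_v}$-type factors of the shape $(s\lambda_{\alpha(\beta)}-\kappa_{\alpha(\beta)}+1)^{-1}$, up to holomorphic perturbations. The rightmost pole sits at $s=\max_\alpha(\kappa_\alpha-1)/\lambda_\alpha$, which by the definitions in \cite[\S 4]{PSTVA19} is exactly $\tilde{a}((X,D_{\mathrm{red}}),L)=a$, and its order equals the number of $\alpha$ attaining the maximum, namely $b$. Dividing by $\zeta_{\mathbb{Q}_v}(s-a)^{b}$ then cancels this pole and yields a holomorphic continuation to $\Re(s)>a-\delta$ for sufficiently small $\delta>0$. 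The main thing to verify is that contributions from different strata do not conspire to lower the leading pole order below $b$; this non-cancellation is precisely what the local asymptotic analysis of Chambert-Loir--Tschinkel, packaged in \cite[\S 4]{PSTVA19}, guarantees. Since the Campana indicator is trivially $1$ at places in $S$, no additional subtlety beyond that reference is needed.
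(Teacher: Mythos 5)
Your proposal is correct and matches the paper's approach. The paper's proof simply cites Shalika--Tschinkel (Lemma~4.1 of that paper) for part~(1) and Chambert-Loir--Tschinkel's Igusa-integral results (Proposition~4.3 and Corollary~4.4 of their paper, cited in the text as ``volume'') for part~(2), both of which proceed exactly by the local-coordinate and boundary-stratification analysis you sketch, with the Campana indicator trivialized to $1$ at $v\in S$. One small imprecision worth flagging: the local $b$-invariant $b(\mathbb{Q}_v,(X,D_{\mathrm{red}}),L)$ is not literally the number of $\alpha$ attaining the maximum ratio $(\kappa_\alpha-1)/\lambda_\alpha$, but rather the maximal cardinality of a subset of such $\alpha$ whose common intersection stratum has a $\mathbb{Q}_v$-point --- a refinement that your deferred reference to the definitions in the Pieropan--Smeets--Tanimoto--V\'arilly-Alvarado paper, \S 4, does properly account for.
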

\begin{proof}
\begin{enumerate}
\item The assertion follows from \cite[Lemma 4.1]{ST04}.
\item This follows from  \cite[Proposition 4.3]{volume} by taking $ \Phi=\delta_{\varepsilon,v}\equiv 1 $. See also \cite[Corollary 4.4]{volume}.
\end{enumerate}
\end{proof}

\subsection{Euler products}
Let $ \alpha\in \mathcal{A} $. We denote the field of definition for one of the geometrically irreducible components of $ D_{\alpha} $ by $ F_{\alpha} $.

\begin{prop}\label{propG}
Let $ p\not\in S $ and our integral model $ \mathcal{X} $ has good reduction at $ p $. Recall the decomposition
\begin{equation*}
D_{\alpha}\otimes_{\mathbb{Q}}\mathbb{Q}_{p}=\bigcup_{\beta\in \mathcal{A}_{p}(\alpha)}D_{p,\beta}
\end{equation*}
of $ D_{\alpha}\otimes_{\mathbb{Q}}\mathbb{Q}_{p} $ into irreducible components.
\begin{enumerate}
\item The function
\begin{equation*}
\mathbf{s}\mapsto \prod_{\alpha\in \mathcal{A}}\prod_{\beta\in \mathcal{A}_{p}(\alpha)}\zeta_{\mathbb{Q}_{p,\beta}}(m_{\alpha}(s_{\alpha}-\kappa_{\alpha}+1))^{-1}\mathsf{Z}_{0,\varepsilon,p}(\mathbf{s})
\end{equation*}
is holomorphic on $ \mathsf{T}_{>-\delta} $ for $\delta>0$ sufficiently small. If $ \varepsilon_{\alpha}=1 $, we take $ \zeta_{\mathbb{Q}_{p,\beta}}(m_{\alpha}(s_{\alpha}-\kappa_{\alpha}+1))^{-1}=1 $.
\item Let $\delta>0$ be sufficiently small, then there is a constant $ \delta'>0 $ such that
\begin{equation*}
\prod_{\alpha\in \mathcal{A}}\prod_{\beta\in \mathcal{A}_{p}(\alpha)}\zeta_{\mathbb{Q}_{p,\beta}}(m_{\alpha}(s_{\alpha}-\kappa_{\alpha}+1))^{-1}\mathsf{Z}_{0,\varepsilon,p}(\mathbf{s})=1+O(p^{-(1+\delta')})
\end{equation*}
for $\mathbf{s}\in \mathsf{T}_{>-\delta} $.
\end{enumerate}
\end{prop}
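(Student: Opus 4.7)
The plan is to exploit the explicit formula for $\mathsf{Z}_{0,\varepsilon,p}(\mathbf{s})$ produced by Proposition~\ref{alphabeta} and to match it order-by-order in $1/p$ against the expansion of $\prod_{\alpha,\beta}\zeta_{\mathbb{Q}_{p,\beta}}(m_\alpha(s_\alpha - \kappa_\alpha + 1))^{-1} = \prod_{\alpha,\beta}(1 - p^{-f_{p,\beta}m_\alpha(s_\alpha - \kappa_\alpha + 1)})$. Since $G \cong \mathbb{A}^{3}$ as a variety, $\#G(\mathbb{F}_{p}) = p^{3}$, so the $B = \varnothing$ term of Proposition~\ref{alphabeta} contributes exactly $1$.

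\textbf{Part (1).} First I would fix $0 < \delta < \min_{\alpha : \varepsilon_\alpha < 1} 1/m_{\alpha}$. Then on $\mathsf{T}_{>-\delta}$ one has $\Re(s_\alpha - \kappa_\alpha + 1) > 1/m_{\alpha} - \delta > 0$ for every $\alpha$ with $\varepsilon_\alpha < 1$, so each denominator $1 - p^{-(s_\alpha - \kappa_\alpha + 1)}$ appearing in the formula is uniformly bounded away from zero, while the $\alpha$ with $\varepsilon_\alpha = 1$ do not contribute (the numerator vanishes by convention). Thus $\mathsf{Z}_{0,\varepsilon,p}(\mathbf{s})$ is holomorphic on $\mathsf{T}_{>-\delta}$, and because each zeta-inverse factor $1 - p^{-f_{p,\beta}m_\alpha(s_\alpha - \kappa_\alpha + 1)}$ is entire, the full product remains holomorphic there.

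\textbf{Part (2).} Next I would estimate each summand of Proposition~\ref{alphabeta} uniformly on $\mathsf{T}_{>-\delta}$. The Lang--Weil bound yields $\#\mathcal{D}_{p,B}^{\circ}(\mathbb{F}_{p}) = p^{3-|B|}(1 + O(p^{-1/2}))$ when every $\beta \in B$ satisfies $f_{p,\beta} = 1$, and $\#\mathcal{D}_{p,B}^{\circ}(\mathbb{F}_{p}) = O(p^{3-|B|-1})$ otherwise, since the $\mathbb{F}_{p}$-points of a Galois-non-trivial stratum must lie in the lower-dimensional intersection of Galois-conjugate geometric components. On $\mathsf{T}_{>-\delta}$ one has $|p^{-m_{\alpha(\beta)}(s_{\alpha(\beta)} - \kappa_{\alpha(\beta)}+1)}| \leq p^{-1 + m_{\alpha(\beta)}\delta}$, so the contributions with $|B| \geq 2$, together with those having $|B| = 1$ but $f_{p,\beta} > 1$, are each $O(p^{-2 + O(\delta)})$. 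Expanding the zeta-inverse product as $1 - \sum_{\alpha, \beta} p^{-f_{p,\beta} m_\alpha (s_\alpha - \kappa_\alpha + 1)} + O(p^{-2 + O(\delta)})$ and multiplying by $\mathsf{Z}_{0,\varepsilon,p}(\mathbf{s})$, the $|B| = 1$ terms with $f_{p,\beta} = 1$ cancel the linear terms from the zeta product up to a factor of $\frac{\#\mathcal{D}_{p,\beta}^{\circ}(\mathbb{F}_{p})}{p^{2}}(1 - 1/p)(1 - p^{-(s_\alpha - \kappa_\alpha + 1)})^{-1} - 1 = O(p^{-1/2})$. For $\delta$ chosen small enough, every residual contribution is then $O(p^{-(1+\delta')})$ for some $\delta' > 0$.

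\textbf{Main obstacle.} The delicate step is the bookkeeping for Part (2): verifying the refined Lang--Weil-type bounds for $\#\mathcal{D}_{p,B}^{\circ}(\mathbb{F}_{p})$ when not all $\beta \in B$ have $f_{p,\beta} = 1$ (where Galois descent shrinks the point count), and tracking the truncation of the zeta-product expansion so that the cancellation of the principal $O(p^{-1})$ terms genuinely leaves a remainder of size $O(p^{-(1+\delta')})$ uniformly on $\mathsf{T}_{>-\delta}$.
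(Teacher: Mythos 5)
Your proof is correct and follows essentially the same route as the paper: plug in the explicit local formula from Proposition~\ref{alphabeta}, observe the $B=\varnothing$ term gives exactly $1$, use Lang--Weil to bound the $|B|\geq 1$ strata, and match the linear $|B|=1$, $f_{p,\beta}=1$ terms against the expansion of the zeta-inverse factors. You are in fact a little more explicit than the paper about why strata with $f_{p,\beta}>1$ contribute only $O(p^{3-|B|-1})$ $\mathbb{F}_p$-points (forced onto the intersection of Galois-conjugate geometric components), a step the paper's proof leaves implicit when it restricts the surviving linear terms to $f_{p,\beta}=1$.
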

\begin{proof}
The proof is analogous to that of \cite[Proposition 7.4]{PSTVA19}. By Proposition \ref{alphabeta}, we have
\begin{equation*}
\mathsf{Z}_{0,\varepsilon,p}(\mathbf{s})=\sum_{B\subset \mathcal{A}_{p}}\dfrac{\#\mathcal{D}_{p,B}^{\circ}(\mathbb{F}_{p})}{p^{3-\# B}}\prod_{\beta\in B}\left(1-\dfrac{1}{p}\right)\dfrac{p^{-m_{\alpha(\beta)}(s_{\alpha(\beta)}-\kappa_{\alpha(\beta)}+1)}}{1-p^{-(s_{\alpha(\beta)}-\kappa_{\alpha(\beta)}+1)}}.
\end{equation*}
\begin{enumerate}
\item  If $ B=\varnothing $, then $ \#\mathcal{D}_{p,B}^{\circ}(\mathbb{F}_{p})=\#G(\mathbb{F}_{p})=p^{3} $ and thus 
\begin{equation*}
\dfrac{\#\mathcal{D}_{p,B}^{\circ}(\mathbb{F}_{p})}{p^{3-\# B}}\prod_{\beta\in B}\left(1-\dfrac{1}{p}\right)\dfrac{p^{-m_{\alpha(\beta)}(s_{\alpha(\beta)}-\kappa_{\alpha(\beta)}+1)}}{1-p^{-(s_{\alpha(\beta)}-\kappa_{\alpha(\beta)}+1)}}=\dfrac{p^{3}}{p^{3}}=1.
\end{equation*}
\item  If $ B=\{\beta\} $, then $ B $ lies above a unique $ \alpha(\beta)\in \mathcal{A} $. If $ \mathcal{D}_{p,B}^{\circ}(\mathbb{F}_{p})=\varnothing $ or $ \varepsilon_{\alpha(\beta)}=1 $, then
\begin{equation*}
\dfrac{\#\mathcal{D}_{p,B}^{\circ}(\mathbb{F}_{p})}{p^{3-\# B}}\prod_{\beta\in B}\left(1-\dfrac{1}{p}\right)\dfrac{p^{-m_{\alpha(\beta)}(s_{\alpha(\beta)}-\kappa_{\alpha(\beta)}+1)}}{1-p^{-(s_{\alpha(\beta)}-\kappa_{\alpha(\beta)}+1)}}=0.
\end{equation*}
If $ \mathcal{D}_{p,B}^{\circ}(\mathbb{F}_{p})\neq \varnothing $ and $ \varepsilon_{\alpha(\beta)}\neq 1 $, then 
\begin{equation*}
\#\mathcal{D}_{p,B}^{\circ}(\mathbb{F}_{p})=p^{2}+O(p^{2-\delta_{1}})
\end{equation*}
for some $ \delta_{1}>0 $. Therefore when $ \delta_{1} $ is sufficiently small and $\mathbf{s}\in \mathsf{T}_{>-\delta} $, we have
\begin{equation*}
\dfrac{\#\mathcal{D}_{p,B}^{\circ}(\mathbb{F}_{p})}{p^{3-\# B}}\prod_{\beta\in B}\left(1-\dfrac{1}{p}\right)\dfrac{p^{-m_{\alpha(\beta)}(s_{\alpha(\beta)}-\kappa_{\alpha(\beta)}+1)}}{1-p^{-(s_{\alpha(\beta)}-\kappa_{\alpha(\beta)}+1)}}=p^{-m_{\alpha(\beta)}(s_{\alpha(\beta)}-\kappa_{\alpha(\beta)}+1)}\left(1+O(p^{-\delta_{2}})\right)
\end{equation*}
for some $ \delta_{2}>0 $. On the other hand when $\mathbf{s}\in \mathsf{T}_{>-\delta} $, we have
\begin{equation*}
\left| p^{-m_{\alpha(\beta)}(s_{\alpha(\beta)}-\kappa_{\alpha(\beta)}+1)}\right| \leq p^{-(1-m_{\alpha(\beta)}\delta)}.
\end{equation*}
So if $  \delta$ is sufficiently small,
\begin{equation*}
p^{-m_{\alpha(\beta)}(s_{\alpha(\beta)}-\kappa_{\alpha(\beta)}+1)}\left(1+O(p^{-\delta_{2}})\right)=p^{-m_{\alpha(\beta)}(s_{\alpha(\beta)}-\kappa_{\alpha(\beta)}+1)}+O(p^{-(1+\delta')})
\end{equation*}
for some $ \delta'>0 $.
\item  If $ \#B\geq 2 $, then $ \#\mathcal{D}_{p,B}^{\circ}(\mathbb{F}_{p})=O(p^{3-\#B}) $. Moreover when $\mathbf{s}\in \mathsf{T}_{>-\delta} $,
\begin{equation*}
\left| \left(1-\dfrac{1}{p}\right)\dfrac{p^{-m_{\alpha(\beta)}(s_{\alpha(\beta)}-\kappa_{\alpha(\beta)}+1)}}{1-p^{-(s_{\alpha(\beta)}-\kappa_{\alpha(\beta)}+1)}}\right| \leq p^{-(1-m\delta)}
\end{equation*}
for some $ m>0 $. Thus
\begin{equation*}
\left| \prod_{\beta\in B}\left(1-\dfrac{1}{p}\right)\dfrac{p^{-m_{\alpha(\beta)}(s_{\alpha(\beta)}-\kappa_{\alpha(\beta)}+1)}}{1-p^{-(s_{\alpha(\beta)}-\kappa_{\alpha(\beta)}+1)}}\right| \leq p^{-2(1-m\delta)}=O(p^{-(1+\delta')})
\end{equation*}
for $\mathbf{s}\in \mathsf{T}_{>-\delta} $ and sufficiently small $ \delta>0 $, where $ \delta' $ is as above.
\end{enumerate}
From the above analysis we have for $\mathbf{s}\in \mathsf{T}_{>-\delta} $ and sufficiently small $ \delta>0 $,
\begin{equation*}
\mathsf{Z}_{0,\varepsilon,p}(\mathbf{s})=1+\sum_{\alpha\in \mathcal{A}}\sum_{\beta\in \mathcal{A}_{p}(\alpha)\atop f_{p,\beta}=1}p^{-m_{\alpha}(s_{\alpha}-\kappa_{\alpha}+1)}+O(p^{-(1+\delta')})
\end{equation*}
where $ f_{p,\beta}=[\mathbb{Q}_{p,\beta}:\mathbb{Q}_{p}] $ and thus
\begin{equation*}
\prod_{\alpha\in \mathcal{A}}\prod_{\beta\in \mathcal{A}_{p}(\alpha)}\left(1-p^{-f_{p,\beta}m_{\alpha}(s_{\alpha}-\kappa_{\alpha}+1)}\right)\mathsf{Z}_{0,\varepsilon,p}(\mathbf{s})=1+O(p^{-(1+\delta')}),
\end{equation*}
which implies the proposition.
\end{proof}

\begin{coro}\label{propO}
The function
\begin{equation*}
\mathbf{s}\mapsto \left(\prod_{\alpha\in \mathcal{A}}\zeta_{F_{\alpha}}(m_{\alpha}(s_{\alpha}-\kappa_{\alpha}+1))^{-1}\right)\prod_{p\not\in S}\mathsf{Z}_{0,\varepsilon,p}(\mathbf{s})
\end{equation*}
is holomorphic on the domain $ \mathsf{T}_{>-\delta'} $.
\end{coro}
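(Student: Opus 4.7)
The plan is to combine the local identities of Proposition \ref{propG} with the Euler product expansion of each Dedekind zeta function $\zeta_{F_\alpha}$, reducing the claim to the holomorphy of (i) a finite product of entire local Euler factors supported at bad primes, (ii) an absolutely convergent infinite product over primes of good reduction, and (iii) finitely many local height integrals at bad primes. Concretely, partition $\{p \notin S\}$ into the set $T_{\mathrm{good}}$ of primes where $\mathcal{X}$ has good reduction and the metric is induced by the integral model, and its finite complement $T_{\mathrm{bad}}$.

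For $p \in T_{\mathrm{good}}$, Proposition \ref{propG} tells us that
\[
h_p(\mathbf{s}) := \prod_{\alpha \in \mathcal{A}}\prod_{\beta \in \mathcal{A}_p(\alpha)} \zeta_{\mathbb{Q}_{p,\beta}}\bigl(m_\alpha(s_\alpha - \kappa_\alpha + 1)\bigr)^{-1}\, \mathsf{Z}_{0,\varepsilon,p}(\mathbf{s})
\]
is holomorphic on $\mathsf{T}_{>-\delta}$ and equals $1 + O(p^{-(1+\delta')})$ there, so $\prod_{p \in T_{\mathrm{good}}} h_p(\mathbf{s})$ converges absolutely to a holomorphic function on $\mathsf{T}_{>-\delta'}$ (after possibly shrinking $\delta'$). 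Moreover, at each good prime $p$ the geometric irreducible components of $D_\alpha \otimes_{\mathbb{Q}} \mathbb{Q}_p$ correspond bijectively to the primes of $F_\alpha$ above $p$, with matching residue degrees $f_{p,\beta}$; hence $\prod_{\beta \in \mathcal{A}_p(\alpha)} \zeta_{\mathbb{Q}_{p,\beta}}(s)$ is precisely the local Euler factor of $\zeta_{F_\alpha}(s)$ at $p$. Consequently,
\[
\Bigl(\prod_\alpha \zeta_{F_\alpha}\bigl(m_\alpha(s_\alpha - \kappa_\alpha + 1)\bigr)^{-1}\Bigr) \prod_{p \in T_{\mathrm{good}}}\prod_{\alpha,\beta} \zeta_{\mathbb{Q}_{p,\beta}}\bigl(m_\alpha(s_\alpha - \kappa_\alpha + 1)\bigr)
\]
collapses to a finite product of expressions $(1 - N(\mathfrak{p})^{-m_\alpha(s_\alpha - \kappa_\alpha + 1)})$ over primes $\mathfrak{p}$ of $F_\alpha$ lying above $S \cup T_{\mathrm{bad}}$, each of which is entire in $\mathbf{s}$.

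Finally, the remaining finite product $\prod_{p \in T_{\mathrm{bad}}} \mathsf{Z}_{0,\varepsilon,p}(\mathbf{s})$ is holomorphic on $\mathsf{T}_{>-\delta'}$ by Proposition \ref{propNN}, provided $\delta' < 1 - \varepsilon_\alpha$ for every $\alpha$ with $\varepsilon_\alpha < 1$. Multiplying the three pieces together yields the corollary. The main technical point is the identification, at good reduction primes, of $\mathcal{A}_p(\alpha)$ with the set of primes of $F_\alpha$ above $p$ (and of $\mathbb{Q}_{p,\beta}$ with the corresponding completion); this is exactly what good reduction of the integral model is designed to guarantee, and it is what allows the scattered local Euler factors $\prod_\beta \zeta_{\mathbb{Q}_{p,\beta}}$ to reassemble globally into $\zeta_{F_\alpha}$.
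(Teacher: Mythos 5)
Your argument is correct and is essentially the paper's own proof, which combines Propositions \ref{propNN} and \ref{propG} with the decomposition $F_{\alpha}\otimes_{\mathbb{Q}}\mathbb{Q}_{p}\cong \prod_{\beta\in\mathcal{A}_{p}(\alpha)}\mathbb{Q}_{p,\beta}$ to match the local Euler factors of $\zeta_{F_\alpha}$ with the $\zeta_{\mathbb{Q}_{p,\beta}}$ appearing in Proposition \ref{propG}. One small inaccuracy: the identification of $\mathcal{A}_p(\alpha)$ with the primes of $F_\alpha$ above $p$ (i.e.\ $F_\alpha\otimes_{\mathbb{Q}}\mathbb{Q}_p\cong\prod_\beta\mathbb{Q}_{p,\beta}$) is a purely algebraic fact about the $\mathbb{Q}_p$-irreducible components of $D_\alpha\otimes_{\mathbb{Q}}\mathbb{Q}_p$ and holds for every $p$, not just primes of good reduction --- what good reduction buys you is the explicit evaluation of $\mathsf{Z}_{0,\varepsilon,p}$ in Propositions \ref{alphabeta} and \ref{propG}, not the Euler-factor matching.
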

\begin{proof}
This follows from Propositions \ref{propNN}, \ref{propG} and the fact that
\begin{equation*}
F_{\alpha}\otimes_{\mathbb{Q}}\mathbb{Q}_{p}\cong \prod_{\beta\in \mathcal{A}_{p}(\alpha)}\mathbb{Q}_{p,\beta}
\end{equation*}
for all $ \alpha\in \mathcal{A} $.
\end{proof}

\section{Height integrals II}
In this section, we study the height integral from Lemma \ref{lemma decomposition}:
\begin{equation*}
\int_{G(\mathbb{A}_{\mathbb{Q}})}\mathsf{H}(\mathbf{s},g)^{-1}\overline{\eta_{\mathbf{a}}}(g)\delta_{\varepsilon}(g)\mathrm{d}g=\prod_{v\in\Val(\mathbb{Q})}\int_{G(\mathbb{Q}_{v})}\mathsf{H}_{v}(\mathbf{s},g_{v})^{-1}\overline{\eta_{\mathbf{a}}}(g_{v})\delta_{\varepsilon,v}(g_{v})\mathrm{d}g_{v}.
\end{equation*}
where $ \mathbf{a}\in \Lambda_{1} $ (see Notation \ref{eta and a} for $ \Lambda_{1} $). Recall that in \S \ref{rep of heisen}, for $ \mathbf{a}=(a_{1},a_{2})\in \mathbb{Q}\oplus \mathbb{Q} $, we denote by $ \eta_{\mathbf{a}} $ the corresponding automorphic character of $ G(\mathbb{A}_{\mathbb{Q}}) $:
\begin{equation*}
\eta_{\mathbf{a}}: g(x,z,y)\mapsto \psi_{1}(a_{1}x+a_{2}y),
\end{equation*}
where $ \psi_{1} $ is the Tate character \cite{Tate}.\par
The analysis in this section is analogous to that of \cite[\S 8]{PSTVA19}. For each nonzero $ \mathbf{a}=(a_{1}, a_{2})\in \mathbb{Q}\oplus \mathbb{Q} $, we denote the linear functional
\begin{equation*}
(x,y)\mapsto a_{1}x+a_{2}y
\end{equation*}
by $ f_{\mathbf{a}} $. Recall from Corollary \ref{coro divisor} that
\begin{equation*}
\mathrm{div}(f_{\mathbf{a}})=E(f_{\mathbf{a}})-\sum_{\alpha\in \mathcal{A}}d_{\alpha}(f_{\mathbf{a}})D_{\alpha}
\end{equation*}
with $ d_{\alpha}(f_{\mathbf{a}})\geq 0 $ for all $ \alpha\in \mathcal{A} $ and at least one $ \alpha $ such that $ d_{\alpha}(f_{\mathbf{a}})> 0 $. Define
\begin{equation*}
\mathcal{A}^{0}(\mathbf{a}):=\{\alpha\in \mathcal{A}|d_{\alpha}(f_{\mathbf{a}})=0\},
\end{equation*}
\begin{equation*}
\mathcal{A}^{\geq 1}(\mathbf{a}):=\{\alpha\in \mathcal{A}|d_{\alpha}(f_{\mathbf{a}})\geq 1\}.
\end{equation*}
For every place $ v\in \Val(\mathbb{Q}) $ we define
\begin{equation*}
H_{v}(\mathbf{a})=\max\{|a_{1}|_{v},|a_{2}|_{v}\}
\end{equation*}
and for every nonarchimedean place $ p $ define
\begin{equation*}
j_{p}(\mathbf{a})=\min\{v_{p}(a_{1}),v_{p}(a_{2})\}
\end{equation*}
where $ v_{p}(a_{1}),v_{p}(a_{2}) $ are $ p $-adic valuations of $ a_{1} $ and $ a_{2} $ respectively. We denote
\begin{equation*}
H_{\mathrm{fin}}(\mathbf{a})=\prod_{p\in\Val(\mathbb{Q})_{\mathrm{fin}}}H_{p}(\mathbf{a}).
\end{equation*}
Let $ \infty $ denote the infinite place of $ \mathbb{Q} $. Then we have
\begin{equation}\label{eqD}
H_{\infty}(\mathbf{a})\gg H_{\mathrm{fin}}(\mathbf{a})^{-1}.
\end{equation}
We denote
\begin{equation*}
\mathsf{Z}_{1,\varepsilon,v}(\mathbf{s},\eta_{\mathbf{a}}):=\int_{G(\mathbb{Q}_{v})}\mathsf{H}_{v}(\mathbf{s},g_{v})^{-1}\overline{\eta_{\mathbf{a}}}(g_{v})\delta_{\varepsilon,v}(g_{v})\mathrm{d}g_{v}.
\end{equation*}
\subsection{Places not in $ S $}
Let $ p\not \in S $.
\subsubsection{Places of good reduction}
We assume the integral model $ \mathcal{X} $ has good reduction at $ p $ and the metrics at $ p $ are induced by our integral model. There are two cases, $ j_{p}(\mathbf{a})=0 $ or $ j_{p}(\mathbf{a})\neq 0 $. We first assume $ j_{p}(\mathbf{a})=0 $. 
\begin{prop}\label{propH}
\begin{enumerate}
\item There exists $\delta>0$, independent of $ \mathbf{a} $ such that the function
\begin{equation*}
\mathbf{s}\mapsto \prod_{\alpha\in \mathcal{A}^{0}(\mathbf{a})}\prod_{\beta\in \mathcal{A}_{p}(\alpha)}\zeta_{\mathbb{Q}_{p,\beta}}(m_{\alpha}(s_{\alpha}-\kappa_{\alpha}+1))^{-1}\mathsf{Z}_{1,\varepsilon,p}(\mathbf{s},\eta_{\mathbf{a}})
\end{equation*}
is holomorphic on $ \mathsf{T}_{>-\delta} $. If $ \varepsilon_{\alpha}=1 $, we take $ \zeta_{\mathbb{Q}_{p,\beta}}(m_{\alpha}(s_{\alpha}-\kappa_{\alpha}+1))^{-1}=1 $.
\item There exists $\delta'>0$, independent of $ \mathbf{a} $ such that
\begin{equation*}
\prod_{\alpha\in \mathcal{A}^{0}(\mathbf{a})}\prod_{\beta\in \mathcal{A}_{p}(\alpha)}\zeta_{\mathbb{Q}_{p,\beta}}(m_{\alpha}(s_{\alpha}-\kappa_{\alpha}+1))^{-1}\mathsf{Z}_{1,\varepsilon,p}(\mathbf{s},\eta_{\mathbf{a}})=1+O(p^{-(1+\delta')}),
\end{equation*}
for $\mathbf{s}\in \mathsf{T}_{>-\delta} $.
\end{enumerate}
\end{prop}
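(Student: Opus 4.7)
The plan is to adapt the stratification argument from the proofs of Proposition \ref{alphabeta} and Proposition \ref{propG}, carrying along the extra twisting character $\overline{\eta_{\mathbf{a}}}$. Using the reduction map $\rho_p: G(\mathbb{Q}_p)\rightarrow\mathcal{X}(\mathbb{F}_p)$ and passing to the Tamagawa measure, I would write
\begin{equation*}
\mathsf{Z}_{1,\varepsilon,p}(\mathbf{s},\eta_{\mathbf{a}})
=\sum_{B\subset\mathcal{A}_p}\sum_{y\in\mathcal{D}_{p,B}^{\circ}(\mathbb{F}_p)}
\int_{\rho_p^{-1}(y)}\mathsf{H}_p(\mathbf{s}-\mathbf{\kappa},g_p)^{-1}
\overline{\eta_{\mathbf{a}}}(g_p)\delta_{\varepsilon,p}(g_p)\,\mathrm{d}\tau.
\end{equation*}
On each ball $\rho_p^{-1}(y)\cong\mathfrak{m}_p^3$ with $B=\{\beta_1,\ldots,\beta_\ell\}$, choose analytic local coordinates $z_1,z_2,z_3$ with $D_{p,\beta_i}=\{z_i=0\}$; by Corollary \ref{coro divisor}, the linear form $f_{\mathbf{a}}$ pulls back to $u(z)\prod_{i=1}^{\ell}z_i^{-d_{\alpha_i}(f_{\mathbf{a}})}$ for a local unit $u$, where $\alpha_i:=\alpha(\beta_i)$.

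The hypothesis $j_p(\mathbf{a})=0$ forces $\mathbf{a}\in\mathbb{Z}_p^2$, and since $\psi_{1,p}$ is trivial on $\mathbb{Z}_p$, the character $\overline{\eta_{\mathbf{a}}}(g_p)=\psi_{1,p}(-f_{\mathbf{a}}(g_p))$ is identically $1$ whenever $f_{\mathbf{a}}$ is $p$-integrally valued on $\rho_p^{-1}(y)$, which happens precisely when every $\alpha_i$ lies in $\mathcal{A}^0(\mathbf{a})$ (in particular for $B=\varnothing$, giving the value $1$). On such strata the inner integral reduces to the one in Proposition \ref{alphabeta}, and the combinatorial bookkeeping of Proposition \ref{propG} then contributes exactly the $\mathcal{A}^0(\mathbf{a})$-portion of the expected Euler factor, up to the required error.

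For the remaining strata---those $B$ containing some $\beta$ with $\alpha(\beta)\in\mathcal{A}^{\geq 1}(\mathbf{a})$---the character $\overline{\eta_{\mathbf{a}}}$ is genuinely oscillatory along $z_i$ with $d_{\alpha_i}(f_{\mathbf{a}})\geq 1$, and the inner integral becomes
\begin{equation*}
\int_{\mathfrak{m}_p^3}\prod_{i=1}^{\ell}|z_i|_p^{s_{\alpha_i}-\kappa_{\alpha_i}}
\delta_{\varepsilon,p}(z_i)\,\psi_{1,p}\!\left(-u(z)\prod_{i=1}^{\ell}z_i^{-d_{\alpha_i}(f_{\mathbf{a}})}\right)\prod_{i=1}^{3}\mathrm{d}z_i.
\end{equation*}
The plan is to slice the integration along a coordinate $z_i$ with strictly positive exponent and exploit orthogonality of $\psi_{1,p}$ on cosets modulo a suitable power of $p$ to extract cancellation; combined with the stratum-count bound $\#\mathcal{D}_{p,B}^{\circ}(\mathbb{F}_p)=O(p^{3-\#B})$ and the support restriction imposed by $\delta_{\varepsilon,p}$, this should yield a total contribution of $O(p^{-(1+\delta')})$ uniformly in $\mathbf{a}$ and in $\mathbf{s}\in\mathsf{T}_{>-\delta}$, for some $\delta,\delta'>0$. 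Establishing this uniform oscillatory estimate---where uniformity in $\mathbf{a}$ is crucial for later global summation but here only a size bound is required---is the main technical obstacle, and I would model it on the analysis in \cite[\S 8]{PSTVA19}. Assembling the two cases gives
\begin{equation*}
\mathsf{Z}_{1,\varepsilon,p}(\mathbf{s},\eta_{\mathbf{a}})
=1+\sum_{\alpha\in\mathcal{A}^0(\mathbf{a})}\sum_{\beta\in\mathcal{A}_p(\alpha),\, f_{p,\beta}=1}p^{-m_\alpha(s_\alpha-\kappa_\alpha+1)}+O(p^{-(1+\delta')}),
\end{equation*}
and multiplying by $\prod_{\alpha\in\mathcal{A}^0(\mathbf{a})}\prod_{\beta\in\mathcal{A}_p(\alpha)}\bigl(1-p^{-f_{p,\beta}m_\alpha(s_\alpha-\kappa_\alpha+1)}\bigr)$ produces $1+O(p^{-(1+\delta')})$ on $\mathsf{T}_{>-\delta}$, establishing (1) and (2) simultaneously.
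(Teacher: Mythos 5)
Your overall architecture matches the paper's: stratify via the reduction map, observe that the character is trivial on the strata where every boundary index lies in $\mathcal{A}^0(\mathbf{a})$ (so that Proposition \ref{propG}'s bookkeeping applies), and use oscillation of $\psi_{1,p}$ to suppress the strata meeting $\mathcal{A}^{\geq 1}(\mathbf{a})$. The closing manipulation (identifying the sum $1+\sum_{\alpha\in\mathcal{A}^0(\mathbf{a})}\sum_{f_{p,\beta}=1}p^{-m_\alpha(s_\alpha-\kappa_\alpha+1)}+O(p^{-(1+\delta')})$ and multiplying by the inverse Euler factor) is also how the paper finishes.

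The gap is the claim that $f_{\mathbf{a}}$ pulls back on $\rho_p^{-1}(y)$ to $u(z)\prod_{i}z_i^{-d_{\alpha_i}(f_{\mathbf{a}})}$ with $u$ a \emph{local unit}. That is true only when $y$ avoids the reduction of the zero divisor $E(f_{\mathbf{a}})$. When $y\in E(f_{\mathbf{a}})(\mathbb{F}_p)\cap\mathcal{D}_{p,B}^{\circ}(\mathbb{F}_p)$, the factor $u(z)$ vanishes somewhere on the ball, so $u(z)\prod z_i^{-d_i}$ can be $p$-integral on a subset of positive measure, $\psi_{1,p}$ is trivial there, and the orthogonality argument gives no cancellation. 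Your plan, which runs the oscillatory estimate on all of $\mathcal{D}_{p,B}^{\circ}(\mathbb{F}_p)$ against the crude count $O(p^{3-\#B})$, would then leave a contribution of size roughly $p^{-(1-m_\alpha\delta)}$ from the problematic balls — not absorbable into $O(p^{-(1+\delta')})$. The paper's proof explicitly splits $y$ into the two cases $y\notin E(f_{\mathbf{a}})(\mathbb{F}_p)$ (where $u$ is a unit and the explicit local computation via \cite[Lemma 5.4]{PSTVA19} gives $O(p^{-(3+\delta_2)})$ per ball) and $y\in E(f_{\mathbf{a}})(\mathbb{F}_p)$ (where it drops the character and uses only the Campana support restriction, getting $O(p^{-(2+\delta_3)})$ per ball), then closes with the Lang--Weil estimates $\#(\mathcal{D}_{p,B}^{\circ}\setminus E(f_{\mathbf{a}}))(\mathbb{F}_p)=O(p^2)$ and $\#(\mathcal{D}_{p,B}^{\circ}\cap E(f_{\mathbf{a}}))(\mathbb{F}_p)=O(p)$, the second of which is what rescues the case your argument misses. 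Without this case split and the codimension saving on $E(f_{\mathbf{a}})$, the bound does not close.
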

\begin{proof}
The proof is analogous to that of \cite[Proposition 8.1]{PSTVA19}. As before by stratification we have
\begin{equation*}
\mathsf{Z}_{1,\varepsilon,p}(\mathbf{s},\eta_{\mathbf{a}})=\sum_{B\subset \mathcal{A}_{p}}\sum_{y\in \mathcal{D}_{p,B}^{\circ}(\mathbb{F}_{p})}\int_{\rho_{p}^{-1}(y)}\mathsf{H}_{p}(\mathbf{s}-\mathbf{\kappa},g_{p})^{-1}\delta_{\varepsilon,p}(g_{p})\overline{\eta_{\mathbf{a}}}(g_{p})\mathrm{d}\tau .
\end{equation*}
\begin{enumerate}
\item If $ B=\varnothing $, then the inner sum is 1.
\item If $ B=\{\beta\} $, then $ \beta $ lies above a unique $ \alpha(\beta)\in \mathcal{A} $.  There are two cases, $ \alpha(\beta)\in \mathcal{A}^{0}(\mathbf{a}) $ or $ \alpha(\beta)\in \mathcal{A}^{\geq 1}(\mathbf{a}) $.\par 
If $ \alpha(\beta)\in \mathcal{A}^{0}(\mathbf{a}) $, then the character $ \overline{\eta_{\mathbf{a}}} $ is trivial on $ \rho_{p}^{-1}(\mathcal{D}_{p,B}^{\circ}(\mathbb{F}_{p})) $, the situation is same as that in Proposition \ref{propG}. Thus we have for sufficiently small $ \delta>0 $, the inner summation is
\begin{equation*}
p^{-m_{\alpha(\beta)}(s_{\alpha(\beta)}-\kappa_{\alpha(\beta)}+1)}\left(1+O(p^{-\delta_{1}})\right)
\end{equation*}
for some $ \delta_{1}>0 $.\par
If $ \alpha(\beta)\in \mathcal{A}^{\geq 1}(\mathbf{a}) $, we set $ d:=d_{\alpha(\beta)}(f_{\mathbf{a}}) $, there are two cases. If $ y\not \in E(f_{\mathbf{a}})(\mathbb{F}_{p}) $, then by \cite[Lemma 5.4]{PSTVA19}, we have for sufficiently small $ \delta>0 $,
\begin{equation*}
\int_{\rho_{p}^{-1}(y)}\mathsf{H}_{p}(\mathbf{s}-\mathbf{\kappa},g_{p})^{-1}\delta_{\varepsilon,p}(g_{p})\overline{\eta_{\mathbf{a}}}(g_{p})\mathrm{d}\tau
\end{equation*}
\begin{eqnarray*}
&=&\dfrac{1}{p^{2}}\int_{\mathfrak{m}_{p}}|x|_{p}^{s_{\alpha(\beta)}-\kappa_{\alpha(\beta)}}\mathsf{1}_{\mathfrak{m}_{p}^{m_{\alpha(\beta)}}}(x_{p})\overline{\eta_{\mathbf{a}}}\left(\dfrac{1}{x_{p}^{d}}\right) \mathrm{d}x_{p}\\
&=&\dfrac{1}{p^{2}}\sum_{\ell=m_{\alpha(\beta)}}^{+\infty}p^{-\ell(s_{\alpha(\beta)}-\kappa_{\alpha(\beta)}+1)}\int_{\mathbb{Z}_{p}^{\times}}\overline{\eta_{\mathbf{a}}}(p^{-d\ell}x_{p}^{d})\mathrm{d}x_{p}\\
&=&\begin{cases}
-\dfrac{1}{p^{3}}p^{-(s_{\alpha(\beta)}-\kappa_{\alpha(\beta)}+1)},& \mathrm{if}\; d=m_{\alpha(\beta)}=1
\\ 0, & \text{otherwise}
\end{cases} \\
&=& O(p^{-(3+\delta_{2})})
\end{eqnarray*}
for some $ \delta_{2}>0 $.\par 
If $ y \in E(f_{\mathbf{a}})(\mathbb{F}_{p}) $, we have for sufficiently small $ \delta>0 $, 
\begin{equation*}
\left| \int_{\rho_{p}^{-1}(y)}\mathsf{H}_{p}(\mathbf{s}-\mathbf{\kappa},g_{p})^{-1}\delta_{\varepsilon,p}(g_{p})\overline{\eta_{\mathbf{a}}}(g_{p})\mathrm{d}\tau \right|
\end{equation*}
\begin{eqnarray*}
&\leq & \int_{\rho_{p}^{-1}(y)}\mathsf{H}_{p}(\Re(\mathbf{s})-\mathbf{\kappa},g_{p})^{-1}\delta_{\varepsilon,p}(g_{p})\mathrm{d}\tau\\
&=& O(p^{-(2+\delta_{3})})
\end{eqnarray*}
for some $ \delta_{3}>0 $.\par
By the Lang-Weil estimates \cite{Lawe}:
\begin{equation*}
\#\left(\mathcal{D}_{p,B}^{\circ}\backslash E(f_{\mathbf{a}})\right)(\mathbb{F}_{p})=O(p^{2}),\quad \# \left(\mathcal{D}_{p,B}^{\circ}\cap E(f_{\mathbf{a}})\right)(\mathbb{F}_{p})=O(p),
\end{equation*}
we have for sufficiently small $ \delta>0 $,
\begin{equation*}
\sum_{y\in \mathcal{D}_{p,B}^{\circ}(\mathbb{F}_{p})}\int_{\rho_{p}^{-1}(y)}\mathsf{H}_{p}(\mathbf{s}-\mathbf{\kappa},g_{p})^{-1}\delta_{\varepsilon,p}(g_{p})\overline{\eta_{\mathbf{a}}}(g_{p})\mathrm{d}\tau =O(p^{-(1+\delta_{4})})
\end{equation*}
for some $ \delta_{4}>0 $.
\item If $ \#B\geq 2 $, then as in the proof of Proposition \ref{propG}, we have for $ \delta>0 $ sufficiently small,
\begin{equation*}
\sum_{y\in \mathcal{D}_{p,B}^{\circ}(\mathbb{F}_{p})}\int_{\rho_{p}^{-1}(y)}\mathsf{H}_{p}(\mathbf{s}-\mathbf{\kappa},g_{p})^{-1}\delta_{\varepsilon,p}(g_{p})\overline{\eta_{\mathbf{a}}}(g_{p})\mathrm{d}\tau =O(p^{-(1+\delta_{5})})
\end{equation*}
for some $ \delta_{5}>0 $.
\end{enumerate} 
The proposition then follows from the estimates above.
\end{proof}
Next we assume $ j_{p}(\mathbf{a})\neq 0 $.
\begin{prop}\label{propI}
Let $ p $ be a nonarchimedean place such that $ p\not \in S $ and $ j_{p}(\mathbf{a})\neq 0 $. Then there is a $ \delta>0 $, independent of $ \mathbf{a} $, such that the following function
\begin{equation*}
\mathbf{s}\mapsto \prod_{\alpha\in \mathcal{A}^{0}(\mathbf{a})}\prod_{\beta\in \mathcal{A}_{p}(\alpha)}\zeta_{\mathbb{Q}_{p,\beta}}(m_{\alpha}(s_{\alpha}-\kappa_{\alpha}+1))^{-1}\mathsf{Z}_{1,\varepsilon,p}(\mathbf{s},\eta_{\mathbf{a}})
\end{equation*}
is holomorphic on $ \mathsf{T}_{>-\delta} $ and moreover there is a $\delta'>0$, independent of $ \mathbf{a} $ such that we  have
\begin{equation*}
\left| \prod_{\alpha\in \mathcal{A}^{0}(\mathbf{a})}\prod_{\beta\in \mathcal{A}_{p}(\alpha)}\zeta_{\mathbb{Q}_{p,\beta}}(m_{\alpha}(s_{\alpha}-\kappa_{\alpha}+1))^{-1}\mathsf{Z}_{1,\varepsilon,p}(\mathbf{s},\eta_{\mathbf{a}})\right| \ll \left(1+H_{p}(\mathbf{a})^{-1}\right)^{\delta'}.
\end{equation*}
\end{prop}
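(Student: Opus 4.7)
The plan is to adapt the stratification argument from Proposition \ref{propH}, now tracking how the nontrivial ramification of $\eta_{\mathbf{a}}$ at $p$ (encoded by $j_p(\mathbf{a})\neq 0$) contributes to each stratum of the reduction map $\rho_p : G(\mathbb{Q}_p) \to \mathcal{X}(\mathbb{F}_p)$. As before, write
\begin{equation*}
\mathsf{Z}_{1,\varepsilon,p}(\mathbf{s},\eta_{\mathbf{a}})=\sum_{B\subset \mathcal{A}_{p}}\sum_{y\in \mathcal{D}_{p,B}^{\circ}(\mathbb{F}_{p})}\int_{\rho_{p}^{-1}(y)}\mathsf{H}_{p}(\mathbf{s}-\mathbf{\kappa},g_{p})^{-1}\delta_{\varepsilon,p}(g_{p})\overline{\eta_{\mathbf{a}}}(g_{p})\mathrm{d}\tau,
\end{equation*}
and analyze the contribution of each subset $B$, using that $|\overline{\eta_{\mathbf{a}}}|=1$ to borrow the absolute bounds already established.

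For $B=\varnothing$, the inner contribution is $\int_{G(\mathbb{Z}_p)}\overline{\eta_{\mathbf{a}}}(g_p)\,dg_p$, which by orthogonality of $\psi_{1,p}$ vanishes as soon as one of $a_1,a_2\in \mathbb{Q}_p\setminus \mathbb{Z}_p$; when both lie in $\mathbb{Z}_p$ it equals $1$. For $|B|=1$ with $\alpha(\beta)\in \mathcal{A}^0(\mathbf{a})$, the divisor $f_{\mathbf{a}}$ is regular along $\mathcal{D}_{p,\beta}$, so in the analytic local coordinates of Proposition \ref{propH} the character $\overline{\eta_{\mathbf{a}}}$ becomes an additive character of $\mathbb{Z}_p$ in the non-boundary variables, whose conductor is controlled by $p^{-j_p(\mathbf{a})}$; integrating yields either the same main term as in Proposition \ref{propH} up to an $O(p^{-\delta_1})$ error, or $0$, depending on whether the character is trivial on the relevant $\mathbb{Z}_p$-subgroup. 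For $|B|=1$ with $\alpha(\beta)\in \mathcal{A}^{\geq 1}(\mathbf{a})$, the function $f_{\mathbf{a}}$ has a pole of order $d=d_{\alpha(\beta)}(f_{\mathbf{a}})\geq 1$ along $\mathcal{D}_{p,\beta}$; applying the same reduction as in the proof of Proposition \ref{propH} via \cite[Lemma 5.4]{PSTVA19}, one is reduced to an oscillatory one-variable integral involving $\overline{\eta_{\mathbf{a}}}(x_p^{-d})$, and a substitution to absorb $j_p(\mathbf{a})$ produces an extra factor bounded by a power of $H_p(\mathbf{a})^{-1}=p^{j_p(\mathbf{a})}$. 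For $|B|\geq 2$, the trivial absolute bound from Proposition \ref{propG} already gives $O(p^{-(1+\delta')})$ uniformly in $\mathbf{a}$.

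Summing the stratum-by-stratum estimates, using the Lang--Weil bounds $\#(\mathcal{D}_{p,B}^{\circ}\setminus E(f_{\mathbf{a}}))(\mathbb{F}_p)=O(p^{2})$ and $\#(\mathcal{D}_{p,B}^{\circ}\cap E(f_{\mathbf{a}}))(\mathbb{F}_p)=O(p)$ as in Proposition \ref{propH}, isolates the main contribution as the product of local zeta factors over $\alpha\in \mathcal{A}^0(\mathbf{a})$. Dividing by this product and collecting the remaining factors of $p^{j_p(\mathbf{a})}$ (possibly of mixed sign according to whether $j_p(\mathbf{a})>0$ or $<0$) gives the stated holomorphy on $\mathsf{T}_{>-\delta}$ and a bound of the shape $(1+H_p(\mathbf{a})^{-1})^{\delta'}$, with $\delta,\delta'>0$ chosen small and independent of both $p$ and $\mathbf{a}$.

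The main obstacle is the uniformity of $\delta,\delta'$ in $\mathbf{a}$: on each stratum the power of $p^{j_p(\mathbf{a})}$ produced by the oscillatory integral must be absorbed into $(1+H_p(\mathbf{a})^{-1})^{\delta'}$ without spoiling convergence of the global sum over $\mathbf{a}\in \Lambda_1$ that will eventually be estimated using \eqref{eqD}. A careful bookkeeping of exponents, ensuring $\delta'$ is small enough that the product of local factors at bad $\mathbf{a}$-places is dominated by $H_\infty(\mathbf{a})^{-N}$ for arbitrarily large $N$ via Schwartz decay on the archimedean side, is the delicate step; the rest of the estimates are formally parallel to those of Proposition \ref{propH} and \cite[Proposition 8.2]{PSTVA19}.
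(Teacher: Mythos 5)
Your proposal follows essentially the same route as the paper's proof: stratify via the reduction map, treat the three types of strata ($B=\varnothing$, $|B|=1$ with $\alpha(\beta)$ in $\mathcal{A}^0(\mathbf{a})$ or $\mathcal{A}^{\geq 1}(\mathbf{a})$, and $|B|\geq 2$), invoke \cite[Lemma 5.4]{PSTVA19} to reduce the $\mathcal{A}^{\geq 1}$ contribution to a one-variable oscillatory integral, and use the Lang--Weil point counts to sum the stratum estimates. The paper does exactly this, obtaining for the nontrivial oscillatory integral a bound $O(|j_p(\mathbf{a})|/p^2)$ before summing over $y\notin E(f_{\mathbf{a}})(\mathbb{F}_p)$, which is then absorbed into $(1+H_p(\mathbf{a})^{-1})^{\delta'}$; your ``extra factor bounded by a power of $H_p(\mathbf{a})^{-1}$'' is a slightly looser but equally adequate way to record that contribution. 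One small inaccuracy: under the hypotheses of the proposition ($p\notin S$, good reduction, metrics from the integral model) one has $\mathsf{K}_p=G(\mathbb{Z}_p)$, so $\mathbf{a}\in\Lambda_1$ forces $a_1,a_2\in\mathbb{Z}_p$ and hence $j_p(\mathbf{a})\geq 0$; the hypothesis $j_p(\mathbf{a})\neq 0$ then gives $j_p(\mathbf{a})>0$ only, so the ``mixed sign'' case you hedge against does not occur, and your remark on the $B=\varnothing$ integral possibly vanishing by orthogonality is vacuous in this regime. The uniformity-in-$\mathbf{a}$ issue you flag as delicate is handled in the paper by the upper semicontinuity of $\mathbf{a}\mapsto d_\alpha(f_{\mathbf{a}})$ (via \cite{CT-additive}), which you could cite explicitly rather than leaving to bookkeeping.
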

\begin{proof}
The proof is analogous to that of \cite[Proposition 8.2]{PSTVA19}. As before by stratification we have
\begin{equation*}
\mathsf{Z}_{1,\varepsilon,p}(\mathbf{s},\eta_{\mathbf{a}})=\sum_{B\subset \mathcal{A}_{p}}\sum_{y\in \mathcal{D}_{p,B}^{\circ}(\mathbb{F}_{p})}\int_{\rho_{p}^{-1}(y)}\mathsf{H}_{p}(\mathbf{s}-\mathbf{\kappa},g_{p})^{-1}\delta_{\varepsilon,p}(g_{p})\overline{\eta_{\mathbf{a}}}(g_{p})\mathrm{d}\tau .
\end{equation*}
\begin{enumerate}
\item If $ B=\varnothing $, then the inner sum  equals to a constant.
\item If $ B=\{\beta\} $, then $ \beta $ lies above a unique $ \alpha(\beta)\in \mathcal{A} $. There are two cases, $ \alpha(\beta)\in \mathcal{A}^{0}(\mathbf{a}) $ or $ \alpha(\beta)\in \mathcal{A}^{\geq 1}(\mathbf{a}) $.\par 
If $ \alpha(\beta)\in \mathcal{A}^{0}(\mathbf{a}) $, then as in the proof of Proposition \ref{propH}, for sufficiently small $ \delta>0 $, the inner sum is 
\begin{equation*}
p^{-m_{\alpha(\beta)}(s_{\alpha(\beta)}-\kappa_{\alpha(\beta)}+1)}\left(c+O(p^{-\delta_{1}})\right)
\end{equation*}
for some constants $ c $ and $ \delta_{1}>0 $.\par
If $ \alpha(\beta)\in \mathcal{A}^{\geq 1}(\mathbf{a}) $, we set $ d:=d_{\alpha(\beta)}(f_{\mathbf{a}}) $, there are two cases. If $ y\not \in E(f_{\mathbf{a}})(\mathbb{F}_{p}) $, then by \cite[Lemma 5.4]{PSTVA19}, we have for sufficiently small $ \delta>0 $,
\begin{equation*}
\int_{\rho_{p}^{-1}(y)}\mathsf{H}_{p}(\mathbf{s}-\mathbf{\kappa},g_{p})^{-1}\delta_{\varepsilon,p}(g_{p})\overline{\eta_{\mathbf{a}}}(g_{p})\mathrm{d}\tau
\end{equation*}
\begin{eqnarray*}
&=&\dfrac{1}{p^{2}}\int_{\mathfrak{m}_{p}}|x|_{p}^{s_{\alpha(\beta)}-\kappa_{\alpha(\beta)}}\mathsf{1}_{\mathfrak{m}_{p}^{m_{\alpha(\beta)}}}(x_{p})\overline{\eta_{\mathbf{a}}}\left(\dfrac{p^{j_{p}(\mathbf{a})}}{x_{p}^{d}}\right) \mathrm{d}x_{p}\\
&=&\dfrac{1}{p^{2}}\sum_{\ell=m_{\alpha(\beta)}}^{+\infty}p^{-\ell(s_{\alpha(\beta)}-\kappa_{\alpha(\beta)}+1)}\int_{\mathbb{Z}_{p}^{\times}}\overline{\eta_{\mathbf{a}}}(p^{-d\ell +j_{p}(\mathbf{a})}x_{p}^{d})\mathrm{d}x_{p}\\
&=& O\left(\dfrac{|j_{p}(\mathbf{a})|}{p^{2}}\right).
\end{eqnarray*}
The implied constant can be taken independent of $ \mathbf{a} $ since there are only finitely many choices for $ d_{\alpha}(f_{\mathbf{a}}) $ by \cite[Before Lemma 3.4.1]{CT-additive}, which says that the functions $ \mathbf{a}\mapsto d_{\alpha}(f_{\mathbf{a}}) $ on $ G\backslash \{0\} $ are upper semicontinuous and thus they do not decrease under specialization.\par 
If $ y \in E(f_{\mathbf{a}})(\mathbb{F}_{p}) $, for $ \delta>0 $ sufficiently small we have
\begin{equation*}
\left| \int_{\rho_{p}^{-1}(y)}\mathsf{H}_{p}(\mathbf{s}-\mathbf{\kappa},g_{p})^{-1}\delta_{\varepsilon,p}(g_{p})\overline{\eta_{\mathbf{a}}}(g_{p})\mathrm{d}\tau \right|
\end{equation*}
\begin{eqnarray*}
&\leq & \int_{\rho_{p}^{-1}(y)}\mathsf{H}_{p}(\Re(\mathbf{s})-\mathbf{\kappa},g_{p})^{-1}\delta_{\varepsilon,p}(g_{p})\mathrm{d}\tau\\
&=& O(p^{-(2+\delta')})
\end{eqnarray*}
for some $ \delta'>0 $. Thus using the Lang-Weil estimates \cite{Lawe}:
\begin{equation*}
\#\left(\mathcal{D}_{p,B}^{\circ}\backslash E(f_{\mathbf{a}})\right)(\mathbb{F}_{p})=O(p^{2}),\quad \# \left(\mathcal{D}_{p,B}^{\circ}\cap E(f_{\mathbf{a}})\right)(\mathbb{F}_{p})=O(p),
\end{equation*}
we have
\begin{equation*}
\sum_{y\in \mathcal{D}_{p,B}^{\circ}(\mathbb{F}_{p})}\int_{\rho_{p}^{-1}(y)}\mathsf{H}_{p}(\mathbf{s}-\mathbf{\kappa},g_{p})^{-1}\delta_{\varepsilon,p}(g_{p})\overline{\eta_{\mathbf{a}}}(g_{p})\mathrm{d}\tau =O(|j_{p}(\mathbf{a})|).
\end{equation*}
\item If $ \#B\geq 2 $, then as in the proof of Proposition \ref{propG}, we have
\begin{equation*}
\sum_{y\in \mathcal{D}_{p,B}^{\circ}(\mathbb{F}_{p})}\int_{\rho_{p}^{-1}(y)}\mathsf{H}_{p}(\mathbf{s}-\mathbf{\kappa},g_{p})^{-1}\delta_{\varepsilon,p}(g_{p})\overline{\eta_{\mathbf{a}}}(g_{p})\mathrm{d}\tau =O(p^{-(1+\delta'')})
\end{equation*}
for some $ \delta''>0 $.
\end{enumerate}
The proposition then follows from the estimates above.
\end{proof}

\subsubsection{Places of bad reduction}
Again let $ p\not \in S $. We assume the integral model $ \mathcal{X} $ has bad reduction at $ p $ or the metrics at $ p $ are not induced by our integral model.
\begin{prop}\label{propJJ}
The height function $ \mathsf{Z}_{1,\varepsilon,p}(\mathbf{s},\eta_{\mathbf{a}}) $ is holomorphic in $ \mathbf{s} $ when $ \Re(s_{\alpha})>\kappa_{\alpha}-1 $ for $ \alpha\in \mathcal{A}^{0}(\mathbf{a}) $ with $ \varepsilon_{\alpha}<1 $. For  $ \delta>0 $ there are $ \delta'>0 $ and $ C_{\delta}>0 $ such that
\begin{equation}\label{z1}
\left| \mathsf{Z}_{1,\varepsilon,p}(\mathbf{s},\eta_{\mathbf{a}})\right| < C_{\delta}\left(1+H_{\infty}(\mathbf{a})\right)^{\delta'}
\end{equation}
when $ \Re(s_{\alpha})>\kappa_{\alpha}-1+\delta $ for $ \alpha\in \mathcal{A}^{0}(\mathbf{a}) $ with $\varepsilon_{\alpha}<1 $.
\end{prop}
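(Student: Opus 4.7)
The plan is to mirror the proofs of Propositions \ref{propH} and \ref{propI}, but to accommodate the possibility that the integral model $\mathcal{X}$ is not smooth at $p$ or that the metric is not induced by the model. The first step will be to replace $\mathcal{X} \otimes \mathbb{Z}_p$ by a log resolution $\widetilde{\mathcal{X}} \to \mathcal{X} \otimes \mathbb{Z}_p$; this exists because $D$ is already strict normal crossings over $\mathbb{Q}$, and it yields a smooth proper $\mathbb{Z}_p$-model on which the boundary (including exceptional divisors) is strict normal crossings. After this reduction the stratification arguments of \S\ref{sec2} will apply verbatim, at the cost of enlarging the index set $\mathcal{A}_p$ to include the new exceptional components.

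For the holomorphicity claim, I would stratify $G(\mathbb{Q}_p)$ via the reduction map and analyze the contribution from each stratum in local analytic coordinates $(z_1, z_2, z_3)$ in which the relevant boundary components become coordinate hyperplanes. The integrand then factors into one-dimensional pieces. When $z_i$ corresponds to a component $D_\alpha$ with $\varepsilon_\alpha = 1$, the indicator $\delta_{\varepsilon,p}$ forces the integrand to vanish near $\{z_i = 0\}$, so no constraint on $s_\alpha$ is needed. When $\alpha \in \mathcal{A}^0(\mathbf{a})$ with $\varepsilon_\alpha < 1$, the character $\overline{\eta_{\mathbf{a}}}$ is a locally constant unit in $z_i$ and the standard geometric series converges precisely for $\Re(s_\alpha) > \kappa_\alpha - 1$. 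Finally, when $\alpha \in \mathcal{A}^{\geq 1}(\mathbf{a})$ with $\varepsilon_\alpha < 1$, one has $\overline{\eta_{\mathbf{a}}}(g_p) = \overline{\eta_{\mathbf{a}}}(u \, z_i^{-d_\alpha(f_{\mathbf{a}})})$ for some unit $u$, and the oscillation computation identical to the one in the proof of Proposition \ref{propH} (case $\alpha(\beta) \in \mathcal{A}^{\geq 1}(\mathbf{a})$) shows that only finitely many annular contributions $p^\ell \mathbb{Z}_p^\times$ survive, yielding convergence for every $s_\alpha \in \mathbb{C}$.

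For the uniform bound, I would track how each surviving annular term depends on $\mathbf{a}$. The oscillation integrals $\int_{\mathbb{Z}_p^\times} \overline{\eta_{\mathbf{a}}}(p^{-d\ell} x^d)\, \mathrm{d}x$ vanish outside a range of $\ell$ controlled by the $p$-adic conductor of $\eta_{\mathbf{a}}$, essentially $|j_p(\mathbf{a})|$, so the total contribution is bounded by $O(|j_p(\mathbf{a})|)$ times a power of $p$, exactly as in Proposition \ref{propI}. Combined with the finiteness of the set of values $d_\alpha(f_{\mathbf{a}})$ by upper semicontinuity \cite{CT-additive}, and with the product-formula inequality $H_\infty(\mathbf{a}) \gg H_{\mathrm{fin}}(\mathbf{a})^{-1}$ from (\ref{eqD}), we obtain the claimed polynomial bound $C_\delta (1 + H_\infty(\mathbf{a}))^{\delta'}$.

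The main obstacle will be verifying uniformity in $\mathbf{a}$ across all the new exceptional components produced by the log resolution: one must ensure that the resolution is chosen independently of $\mathbf{a}$, so that the partition of $\mathcal{A}_p$ into $\mathcal{A}^0(\mathbf{a})$ and $\mathcal{A}^{\geq 1}(\mathbf{a})$ (and their analogs for exceptional divisors) can be controlled for all $\mathbf{a}$ simultaneously. This should be handled by the upper semicontinuity of the functions $\mathbf{a} \mapsto d_\alpha(f_{\mathbf{a}})$ cited from \cite{CT-additive}, which guarantees only finitely many cases to consider.
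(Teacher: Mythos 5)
Your proposal takes a genuinely different route from the paper, which handles both assertions by a two-line citation: the holomorphicity is deduced from \cite[Lemma 4.1]{volume} (equivalently \cite[Lemma 8.2]{VecIII}) applied with $\Phi = \delta_{\varepsilon,p}$, and the estimate \eqref{z1} is taken directly from \cite[Proposition 8.3]{PSTVA19}, with the observation that the proof there is purely about $p$-adic oscillatory integrals on affine spaces and does not use the vector-group structure. You instead propose a self-contained argument by first passing to a log resolution $\widetilde{\mathcal{X}} \to \mathcal{X}\otimes\mathbb{Z}_p$ and then re-running the stratification of \S\ref{sec2}. The paper's route is shorter but relies on external black boxes; yours would be more explicit, but as written it has a gap.

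The gap is in the claim that after the log resolution ``the stratification arguments of \S\ref{sec2} will apply verbatim, at the cost of enlarging the index set $\mathcal{A}_p$.'' Since $X_{\mathbb{Q}_p}$ is already smooth, the resolution is an isomorphism on the generic fiber and all new exceptional divisors $E_i$ are \emph{vertical}, i.e.\ supported on the special fiber. These are not new independent boundary components: they do not introduce new variables $s_\alpha$ in the height pairing. Instead, the pullback $\pi^*\mathcal{D}_\alpha = \widetilde{\mathcal{D}}_\alpha + \sum_i e_{\alpha,i}E_i$ gives each $E_i$ an exponent $\sum_\alpha e_{\alpha,i}s_\alpha$ which is a fixed linear combination of the original variables, so on a stratum meeting several $E_i$ the height no longer factors as a product of one-variable pieces with independent exponents. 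More seriously, the Campana indicator $\delta_{\varepsilon,p}$ is defined on the original model $\mathcal{X}$; pulled back to $\widetilde{\mathcal{X}}$, the single inequality $n_p(\mathcal{D}_\alpha,P)\ge m_\alpha$ becomes the coupled linear inequality $n_p(\widetilde{\mathcal{D}}_\alpha,P)+\sum_i e_{\alpha,i}\,n_p(E_i,P)\ge m_\alpha$, which does not decompose as a conjunction of conditions ``$v_p(z_i)\ge m_i$'' in the local coordinates. Your statement that ``$\delta_{\varepsilon,p}$ forces the integrand to vanish near $\{z_i=0\}$ when $\varepsilon_\alpha=1$'' therefore needs to be justified for this mixed pullback, and the ``oscillation computation identical to propH'' is no longer literally identical, since the character may be written in terms of several of the new vertical coordinates at once. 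You correctly flag the need for uniformity in $\mathbf{a}$, but that is a secondary issue; the resolution-dependent reformulation of $\delta_{\varepsilon,p}$ and the vertical exponents are the steps that actually need a new argument. Citing the $\Phi$-uniform volume estimates as the paper does sidesteps precisely these complications.
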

\begin{proof}
The first assertion follows analogous arguments as in the proof of \cite[Lemma 4.1]{volume} or \cite[Lemma 8.2]{VecIII}. As for the estimate (\ref{z1}) we can take it from \cite[Proposition 8.3]{PSTVA19} since the proof of \cite[Proposition 8.3]{PSTVA19} concerns the affine spaces and the  group action is not involved.
\end{proof}

\subsection{Places in $S$}
For the case $ v\in S $, $ \delta_{\varepsilon,v}\equiv 1 $ by definition. 
\begin{prop}\label{propJ}
Let $ v\in S $. Then we have
\begin{enumerate}
\item \label{II-I} The function
\begin{equation*}
\mathbf{s}\mapsto \mathsf{Z}_{1,\varepsilon,v}(\mathbf{s},\eta_{\mathbf{a}})
\end{equation*}
is holomorphic in the domain $ \Re(s_{\alpha})>\kappa_{\alpha}-1+\delta $ for all $ \alpha\in \mathcal{A} $ and any $ \delta>0 $. For any $ n>0 $, there exists $ m_{n}>0 $ such that 
\begin{equation}\label{5.4.1pro}
\left| \prod_{v\in S}\mathsf{Z}_{1,\varepsilon,v}(\mathbf{s},\eta_{\mathbf{a}})\right| \ll_{n}\dfrac{(1+|\mathbf{s}|)^{m_{n}}}{(1+H_{\infty}(\mathbf{a}))^{n}}
\end{equation}
in the above domain.
\item If $ L=\sum_{\alpha\in \mathcal{A}}\lambda_{\alpha}D_{\alpha} $ is a big divisor on $ X $, let
\begin{equation*}
a:=\tilde{a}((X,D_{\mathrm{red}}),L),\quad b:=b(\mathbb{Q}_{v},(X,D_{\mathrm{red}}),L,f_{\mathbf{a}})
\end{equation*}
be as defined in \cite[\S 4]{PSTVA19}. Then there is a constant $ \delta>0 $ such that the function
\begin{equation*}
s\mapsto \zeta_{\mathbb{Q}_{v}}(s-a)^{-b}\mathsf{Z}_{1,\varepsilon,v}(sL,\eta_{\mathbf{a}})
\end{equation*}
admits a holomorphic continuation to the domain  $ \Re(s)>a-\delta $. Moreover for any $ n>0 $ there exists $ m_{n}>0 $ such that 
\begin{equation}\label{5.4.2pro}
\left| \prod_{v\in S}\zeta_{\mathbb{Q}_{v}}(s-a)^{-b}\mathsf{Z}_{1,\varepsilon,v}(sL,\eta_{\mathbf{a}})\right| \ll_{n}\dfrac{(1+|s|)^{m_{n}}}{(1+H_{\infty}(\mathbf{a}))^{n}}
\end{equation}
in the above domain.
\end{enumerate}
\end{prop}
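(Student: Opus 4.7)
The plan is to leverage the fact that at places $v\in S$ the Campana indicator $\delta_{\varepsilon,v}\equiv 1$, so the local height integral $\mathsf{Z}_{1,\varepsilon,v}(\mathbf{s},\eta_{\mathbf{a}})$ reduces to the ordinary integral studied in Pieropan et al.~\cite[\S 8]{PSTVA19} in the vector‐group setting. Since the integration variable $g_{v}\in G(\mathbb{Q}_{v})$ and the height factor $\mathsf{H}_{v}(\mathbf{s},g_{v})^{-1}=\prod_{\alpha}\Vert \mathsf{f}_{\alpha}(g_{v})\Vert_{v}^{s_{\alpha}}$ depend only on the underlying affine structure of $G$ (not on its group law), while $\overline{\eta_{\mathbf{a}}}(g_{v})=\overline{\psi_{1,v}(a_{1}x_{v}+a_{2}y_{v})}$ is a character of the abelianization, the analysis reduces formally to the integrals treated there.

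First I would establish part (1). Holomorphicity in the tube $\Re(s_{\alpha})>\kappa_{\alpha}-1+\delta$ follows from \cite[Lemma 4.1]{ST04}: decompose $X$ into analytic charts adapted to the strict normal crossings divisor $D=\cup D_{\alpha}$, and on each chart express $\mathsf{H}_{v}(\mathbf{s}-\boldsymbol{\kappa},g_{v})^{-1}\mathrm{d}g_{v}$ as a product $\prod_{i}\vert z_{i}\vert_{v}^{s_{\alpha_{i}}-\kappa_{\alpha_{i}}}\mathrm{d}z_{i}$ times a smooth factor with compact support; absolute convergence then reduces to the one‐variable Tate integrals $\int_{|z|_{v}\leq 1}|z|_{v}^{s-\kappa}\mathrm{d}z$, which converge for $\Re(s)>\kappa-1$. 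For the decay estimate (\ref{5.4.1pro}), I would exploit oscillation of the character $\overline{\eta_{\mathbf{a}}}$: at the archimedean place, repeated integration by parts in the variables $(x_{\infty},y_{\infty})$ against the character $\psi_{1,\infty}(a_{1}x_{\infty}+a_{2}y_{\infty})$ gains one factor $(1+H_{\infty}(\mathbf{a}))^{-1}$ per derivative, at the cost of polynomial growth $(1+|\mathbf{s}|)^{m_{n}}$ coming from differentiating $\mathsf{H}_{\infty}(\mathbf{s},g_{\infty})^{-1}$; at finite places $v\in S$, the character has bounded conductor and one obtains a similar bound using (\ref{eqD}) to control the dependence on $\mathbf{a}$. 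Iterating $n$ times yields (\ref{5.4.1pro}).

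For part (2), I would specialize $\mathbf{s}=sL$ and analyze the pole of $s\mapsto \mathsf{Z}_{1,\varepsilon,v}(sL,\eta_{\mathbf{a}})$ at $s=a=\tilde a((X,D_{\mathrm{red}}),L)$. The contribution to the polar part comes only from those strata $D_{v,B}^{\circ}$ that are extremal for $L$ in the sense of \cite[\S 4]{PSTVA19} and on which the linear form $f_{\mathbf{a}}$ does not vanish identically; the number of such components realizing the maximal order of pole is precisely $b(\mathbb{Q}_{v},(X,D_{\mathrm{red}}),L,f_{\mathbf{a}})$. By the same chart‐by‐chart reduction as above, each such stratum contributes a Tate factor $\zeta_{\mathbb{Q}_{v}}(s-a)$, so dividing by $\zeta_{\mathbb{Q}_{v}}(s-a)^{b}$ produces a function holomorphic in a strip $\Re(s)>a-\delta$. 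The estimate (\ref{5.4.2pro}) then follows by the same integration‐by‐parts procedure, uniformly in $s$.

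The main obstacle I anticipate is the uniformity in $\mathbf{s}$ (or $s$) of the integration‐by‐parts bounds: one must verify that derivatives of $\mathsf{H}_{v}(\mathbf{s},g_{v})^{-1}$ in the archimedean coordinates grow at most polynomially in $|\mathbf{s}|$ uniformly over compact subsets of the tube, and that this growth survives the summation/integration over the stratification. This is essentially the content of \cite[Proposition 8.3 and 8.4]{PSTVA19}, and since $\delta_{\varepsilon,v}\equiv 1$ for $v\in S$ the argument there applies verbatim with no new Campana input; the only mild point is keeping track of the definition of $b$ in the $f_{\mathbf{a}}$-dependent setting, which is already handled in \cite[\S 4]{PSTVA19}.
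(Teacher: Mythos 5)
Your treatment of part (1) is essentially the paper's: since $\delta_{\varepsilon,v}\equiv 1$ at $v\in S$, the local integral is the ordinary height integral twisted by the character, and the paper simply cites \cite[Proposition 8.1]{VecIII} (noting the argument lives on the underlying affine space and is indifferent to the group law). Your chart-by-chart Tate-integral description and the integration-by-parts sketch for (\ref{5.4.1pro}) are fine substitutes for that citation.

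Part (2) has a genuine gap. You correctly observe that the pole order should drop to $b(\mathbb{Q}_v,(X,D_{\mathrm{red}}),L,f_{\mathbf{a}})$, but your explanation of \emph{which} strata contribute, and \emph{why} the others are suppressed, is not right as stated and misses the mechanism. The criterion is not whether $f_{\mathbf{a}}$ ``does not vanish identically'' on a stratum: what matters is the pole order $d_{\alpha}(f_{\mathbf{a}})$ of the rational function $f_{\mathbf{a}}$ along each boundary divisor $D_\alpha$. Divisors with $d_{\alpha}(f_{\mathbf{a}})=0$ still produce $\zeta_{\mathbb{Q}_v}(s-a)$-type factors, exactly as in the untwisted case; it is the divisors with $d_{\alpha}(f_{\mathbf{a}})>0$ whose contribution is damped, and they are damped because near such a divisor the phase $f_{\mathbf{a}}$ blows up, so the character $\overline{\eta_{\mathbf{a}}}=\overline{\psi_{1,v}\circ f_{\mathbf{a}}}$ oscillates rapidly. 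Making this rigorous forces a further step your sketch omits: $f_{\mathbf{a}}$ is only a rational function on $X$, with indeterminacies along the boundary, so one must first resolve the indeterminacies of the family $f_{\mathbf{a}}$ (via \cite[Lemma 3.4.1]{CT-additive}, producing $\pi:Y\to P\times X$) in order to write the local integral in the oscillatory normal form of \cite[\S 3.4]{CT-additive}. Only then can one integrate out a variable $x_{\alpha_0}$ with $d_{\alpha_0}>0$ using \cite[Proposition 2.4.1]{CT-additive} to gain absolute convergence past $\Re(s)=a$, which is what actually ``insures'' the holomorphic continuation; this is how the paper proves (2). Without the resolution step and the oscillatory-integral estimate, your ``each such stratum contributes a Tate factor'' line would naively produce a pole of order equal to the full $b$-invariant $b(\mathbb{Q}_v,(X,D_{\mathrm{red}}),L)$, which is too large. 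Your reference to \cite[Propositions 8.3, 8.4]{PSTVA19} is also not where this is handled; the paper's proof of (2) is modeled on \cite[Proposition 3.4.4]{CT-additive} and adopts the uniform estimate (\ref{5.4.2pro}) from \cite[Proposition 3.4.4, Lemma 3.5.2]{CT-additive}.
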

\begin{proof}
\begin{enumerate}
\item Since $ \delta_{\varepsilon,v}\equiv 1 $,
\begin{equation*}
\mathsf{Z}_{1,\varepsilon,v}(\mathbf{s},\eta_{\mathbf{a}})=\int_{G(\mathbb{Q}_{v})}\mathsf{H}_{v}(\mathbf{s},g_{v})^{-1}\overline{\eta_{\mathbf{a}}}(g_{v})\mathrm{d}g_{v},
\end{equation*}
then (\ref{II-I}) follows from \cite[Proposition 8.1]{VecIII}. Note that although \cite[Proposition 8.1]{VecIII} is stated for vector groups, it is also applicable to the Heisenberg group since the proof of it concerns the affine spaces and the group action is not involved.
\item The proof is analogous to that of \cite[Proposition 3.4.4]{CT-additive} and we shall see as \cite[Proposition 3.4.4]{CT-additive}, the holomorphic continuation is insured by the positivity $ d_{\alpha}(f_{\mathbf{a}})>0 $ for some $ \alpha\in \mathcal{A} $.\par 
There is a stratification of $ X $ by locally closed subvarieties
\begin{equation*}
D_{A}:=\bigcap_{\alpha\in A}D_{\alpha},\qquad D_{A}^{\circ}:=D_{A}\backslash \bigcup_{A\subsetneq A'}D_{A'}
\end{equation*}
where $ A\subset \mathcal{A} $.\par
Recall that the divisor of $ f_{\mathbf{a}} $ is
\begin{equation*}
\mathrm{div}(f_{\mathbf{a}})=E(f_{\mathbf{a}})-\sum_{\alpha\in \mathcal{A}}d_{\alpha}(f_{\mathbf{a}})D_{\alpha},
\end{equation*}
with $ d_{\alpha}(f_{\mathbf{a}})\geq 0 $ and there is at least one $ \alpha $ such that $ d_{\alpha}(f_{\mathbf{a}})> 0 $. The function $ f_{\mathbf{a}} $ may have indeterminacies, so we apply a resolution of indeterminacies by \cite[Lemma 3.4.1]{CT-additive} and write $ \pi: Y\rightarrow P\times X $ for the resolution where $ P $ is a fixed strata of the decomposition $ \mathbb{P}^{2}=\sqcup_{i}P_{i} $ of $ \mathbb{P}^{2} $ into locally closed subsets. The coefficients $ d_{\alpha}(f_{\mathbf{a}}) $ are constant on $ P $ and we denote them by $ d_{\alpha} $. Now the divisor 
of $ f_{\mathbf{a}} $ can be written as
\begin{equation*}
\mathrm{div}(f_{\mathbf{a}})=H_{\mathbf{a}}-\sum_{\alpha\in A}d_{\alpha}D_{\alpha}'-\sum_{\beta\in B}e_{\beta}E_{\beta}
\end{equation*}
where the divisor $ H_{\mathbf{a}} $ is the strict transform of $ E(f_{\mathbf{a}}) $ in $ G $, $ D_{\alpha}' $ is the strict transform of $ D_{\alpha} $ and the divisors $ E_{\beta} $ are the exceptional divisors of the resolution $ \pi $.\par
Let us consider local coordinates $ \left((x_{\alpha})_{\alpha\in A},(y_{\beta})_{\beta\in B}\right) $ for a point $ \mathbf{x}\in Y  $, where $ x_{\alpha} $ is a local equation of $ D_{\alpha}' $ for each $ \alpha $ and $ y_{\beta} $ is a local equation of $ E_{\beta} $ for each $ \beta $. We extend $ \left((x_{\alpha})_{\alpha\in A},(y_{\beta})_{\beta\in B}\right) $ to a system of local coordinates $ \left((x_{\alpha})_{\alpha\in A},(y_{\beta})_{\beta\in B},(z_{\gamma})_{\gamma\in C}\right) $ in a neighborhood $ \mathcal{U}=P\times \mathcal{U}_{0} $ of $ \mathbf{x} $ in $ Y(\mathbb{Q}_{v}) $, here $ \mathcal{U}_{0} $ is a subset of $ X(\mathbb{Q}_{v}) $. From above construction, the function
\begin{equation*}
u_{\mathbf{a}}:\,x\mapsto \prod_{\alpha\in A}x_{\alpha}^{d_{\alpha}}\prod_{\beta\in B}y_{\beta}^{e_{\beta}}f_{\mathbf{a}}
\end{equation*}
is regular on $ \mathcal{U} $ and homogeneous of degree one in $ \mathbf{a} $. Thus there is an upper bound
\begin{equation*}
\vert u_{\mathbf{a}}(\mathbf{x})\vert\ll\Vert\mathbf{a}\Vert
\end{equation*}
for $ \mathbf{x}\in\mathcal{U}_{0} $. Applying a partition of unity, the integral $ \mathsf{Z}_{1,\varepsilon,v}(\mathbf{s},\eta_{\mathbf{a}}) $ now takes the form of the summation of
\begin{equation}\label{3.4.4}
\int_{\mathcal{U}_{0}}\prod_{\alpha\in A}|x_{\alpha}|^{\lambda_{\alpha}s-\kappa_{\alpha}}\prod_{\beta\in B}|y_{\beta}|^{\lambda_{\beta}(s)}\overline{\psi_{1}}\left(u_{\mathbf{a}}\prod_{\alpha\in A}x_{\alpha}^{-d_{\alpha}}\prod_{\beta\in B}y_{\beta}^{-e_{\beta}} \right)\prod_{\alpha\in A}\mathrm{d}x_{\alpha}\prod_{\beta\in B}\mathrm{d}y_{\beta}\prod_{\gamma\in C}\mathrm{d}z_{\gamma},
\end{equation}
where $ \lambda_{\beta}(s)\geq 0 $ for each $ \beta\in B $ (See \cite[\S 3.4]{CT-additive}).\par 
Note that $ u_{\mathbf{a}} $ does not vanish on $ \mathcal{U} $ because of the resolution of indeterminacies of $ f_{\mathbf{a}} $. We now show the holomorphic continuation. We first integrate with respect to a variable $ x_{\alpha_{0}} $ such that $ d_{\alpha_{0}}>0 $. Applying \cite[Proposition 2.4.1]{CT-additive}, the integral (\ref{3.4.4}) then can be written as
\begin{equation*}
\int_{\mathcal{U}_{0}}\prod_{\alpha\in A\atop \alpha\neq\alpha_{0}}|x_{\alpha}|^{\lambda_{\alpha}s-\kappa_{\alpha}}\prod_{\beta\in B}|y_{\beta}|^{\lambda_{\beta}(s)}F(\mathbf{x}',\mathbf{a})\prod_{\alpha\in A\atop \alpha\neq\alpha_{0}}\mathrm{d}x_{\alpha}\prod_{\beta\in B}\mathrm{d}y_{\beta}\prod_{\gamma\in C}\mathrm{d}z_{\gamma}
\end{equation*}
where $ \mathbf{x}'=\left((x_{\alpha})_{\alpha\neq\alpha_{0}},(y_{\beta})_{\beta\in B},(z_{\gamma})_{\gamma\in C}\right) $ and
\begin{eqnarray*}
\vert F(\mathbf{x}',\mathbf{a})\vert &\ll & \left(\Vert \mathbf{a}\Vert\prod_{\alpha\in A\atop \alpha\neq\alpha_{0}}|x_{\alpha}|^{-d_{\alpha}}\prod_{\beta\in B}|y_{\beta}|^{-e_{\beta}}\right)^{-1/d_{\alpha_{0}}}\\
& \ll &\Vert \mathbf{a}\Vert^{-1/d_{\alpha_{0}}}\prod_{\alpha\in A\atop \alpha\neq\alpha_{0}}|x_{\alpha}|^{d_{\alpha}/d_{\alpha_{0}}}\prod_{\beta\in B}|y_{\beta}|^{e_{\beta}/d_{\alpha_{0}}}.
\end{eqnarray*}
Since there is at least one $ \alpha $ such that $ d_{\alpha}> 0 $, the positive exponent $ d_{\alpha}/d_{\alpha_{0}} $ of $ |x_{\alpha}| $ insures absolute convergence of the integral (\ref{3.4.4}) in a neighborhood of $ a $, the assertion on holomorphic continuation is proved. We adopt the estimate (\ref{5.4.2pro}) from \cite[Proposition 3.4.4, Lemma 3.5.2]{CT-additive}.
\end{enumerate} 
\end{proof}

\subsection{Euler products}
Finally we consider the Euler product from Lemma \ref{lemma decomposition}:
\begin{equation*}
\mathsf{Z}_{1,\varepsilon}(\mathbf{s},\eta_{\mathbf{a}})=\prod_{v\in \Val(\mathbb{Q})}\mathsf{Z}_{1,\varepsilon,v}(\mathbf{s},\eta_{\mathbf{a}}),
\end{equation*}
where $ \mathbf{a}\in \Lambda_{1} $. For every $ \alpha\in \mathcal{A} $ we set
\begin{equation*}
\zeta_{F_{\alpha},S}(s)=\prod_{v\not \in S}\prod_{\beta\in \mathcal{A}_{v}(\alpha)}\zeta_{\mathbb{Q}_{v},\beta}(s).
\end{equation*}
\begin{prop}\label{propP}
Assume that $ (X,D_{\varepsilon}) $ is klt. Then there is a constant $ \delta>0 $, independent of $ \mathbf{a} $, such that the function
\begin{equation*}
\mathbf{s}\mapsto \left(\prod_{\alpha\in \mathcal{A}^{0}(\mathbf{a})}\zeta_{F_{\alpha},S}(m_{\alpha}(s_{\alpha}-\kappa_{\alpha}+1))\right)^{-1} \mathsf{Z}_{1,\varepsilon}(\mathbf{s},\eta_{\mathbf{a}})
\end{equation*}
is holomorphic on $ \mathsf{T}_{>-\delta} $. \par 
Moreover, for any $ n>0 $ there exists $ m_{n}>0 $ such that  
\begin{equation*}\label{estimate3}
\left| \left(\prod_{\alpha\in \mathcal{A}^{0}(\mathbf{a})}\zeta_{F_{\alpha},S}(m_{\alpha}(s_{\alpha}-\kappa_{\alpha}+1))\right)^{-1} \mathsf{Z}_{1,\varepsilon}(\mathbf{s},\eta_{\mathbf{a}})\right| \ll_{n} \dfrac{(1+|\mathbf{s}|)^{m_{n}}}{(1+H_{\infty}(\mathbf{a}))^{n}}.
\end{equation*}
\end{prop}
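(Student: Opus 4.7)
The plan is to mimic the Euler product argument used for $\mathsf{Z}_{0,\varepsilon}$ in Corollary \ref{propO}, but now tracking the extra character $\overline{\eta_{\mathbf{a}}}$ and showing that its contribution to the growth in $\mathbf{a}$ can be absorbed into a polynomial factor, which is then beaten by the rapid decay coming from the archimedean factor. Factor the global integral as
\begin{equation*}
\mathsf{Z}_{1,\varepsilon}(\mathbf{s},\eta_{\mathbf{a}})=\prod_{v\in S}\mathsf{Z}_{1,\varepsilon,v}(\mathbf{s},\eta_{\mathbf{a}})\cdot\prod_{p\notin S,\,j_{p}(\mathbf{a})=0}\mathsf{Z}_{1,\varepsilon,p}(\mathbf{s},\eta_{\mathbf{a}})\cdot\prod_{p\notin S,\,j_{p}(\mathbf{a})\neq 0}\mathsf{Z}_{1,\varepsilon,p}(\mathbf{s},\eta_{\mathbf{a}}),
\end{equation*}
and within each product separate the finitely many primes of bad reduction (or where the metric is not induced by the model) from the places of good reduction.

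For the places of good reduction with $j_{p}(\mathbf{a})=0$, Proposition \ref{propH} gives that after multiplying by $\prod_{\alpha\in\mathcal{A}^{0}(\mathbf{a})}\prod_{\beta\in\mathcal{A}_{p}(\alpha)}\zeta_{\mathbb{Q}_{p,\beta}}(m_{\alpha}(s_{\alpha}-\kappa_{\alpha}+1))^{-1}$ the local factor equals $1+O(p^{-(1+\delta')})$ uniformly in $\mathbf{a}$ on $\mathsf{T}_{>-\delta}$; hence the partial Euler product over these primes converges absolutely and defines a holomorphic function on $\mathsf{T}_{>-\delta}$. The klt hypothesis is used precisely here, since it guarantees $\varepsilon_{\alpha}<1$ and $m_{\alpha}<\infty$, so that the auxiliary zeta factors make sense. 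Using $F_{\alpha}\otimes_{\mathbb{Q}}\mathbb{Q}_{p}\cong\prod_{\beta\in\mathcal{A}_{p}(\alpha)}\mathbb{Q}_{p,\beta}$, the assembled zeta factors combine to $\zeta_{F_{\alpha},S}(m_{\alpha}(s_{\alpha}-\kappa_{\alpha}+1))$ away from $S$, which is the prefactor in the statement.

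The remaining primes are handled by the estimates already in place. For the finitely many $p\notin S$ of bad reduction, Proposition \ref{propJJ} gives holomorphy on $\mathsf{T}_{>-\delta}$ and a bound of the form $(1+H_{\infty}(\mathbf{a}))^{\delta'}$. For $p\notin S$ with $j_{p}(\mathbf{a})\neq 0$, Proposition \ref{propI} gives, after multiplying by the same local zeta correction, a bound $(1+H_{p}(\mathbf{a})^{-1})^{\delta'}$; since there are only finitely many such primes, their product is bounded by $\prod_{p}(1+H_{p}(\mathbf{a})^{-1})^{\delta'}\ll H_{\mathrm{fin}}(\mathbf{a})^{-\delta''}\ll H_{\infty}(\mathbf{a})^{\delta''}$ by (\ref{eqD}). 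For $v\in S$, Proposition \ref{propJ}(\ref{II-I}) is holomorphic on the same domain and supplies the crucial rapid decay $(1+|\mathbf{s}|)^{m_{n}}(1+H_{\infty}(\mathbf{a}))^{-n}$ for every $n$.

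Multiplying these pieces together gives holomorphy on $\mathsf{T}_{>-\delta}$, and the bound follows by absorbing all the polynomial-in-$H_{\infty}(\mathbf{a})$ factors from the nonarchimedean places into the archimedean rapid decay factor by choosing $n$ large. The main technical nuisance to guard against is that the constants in the various local estimates must be uniform in $\mathbf{a}$; for the finite bad-reduction primes and the primes with $j_{p}(\mathbf{a})\neq 0$ this uniformity was supplied in Propositions \ref{propJJ} and \ref{propI} (using that there are only finitely many possible values of $d_{\alpha}(f_{\mathbf{a}})$ by upper semicontinuity), so the pieces fit together cleanly.
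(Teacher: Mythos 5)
Your proof is correct and takes essentially the same approach as the paper, whose proof consists of the single line ``This follows from Propositions~\ref{propH}, \ref{propI}, \ref{propJJ} and \ref{propJ}, together with the estimate~(\ref{eqD})''; you have just spelled out how the local pieces assemble into the global Euler product and how the polynomial growth in $H_\infty(\mathbf{a})$ from the finite places is absorbed by the rapid decay from $S$. One small clarification on attribution: the klt hypothesis is not really needed to make the auxiliary zeta factors ``make sense'' (the convention $\zeta^{-1}=1$ when $\varepsilon_\alpha=1$ already handles that in Propositions~\ref{propH} and~\ref{propI}); its substantive role is to guarantee that $\mathsf{T}_{>-\delta}$, defined by $\Re(s_\alpha)>\kappa_\alpha-\varepsilon_\alpha-\delta$, lies inside the domain $\Re(s_\alpha)>\kappa_\alpha-1+\delta'$ of holomorphy in Proposition~\ref{propJ}(1), which fails whenever some $\varepsilon_\alpha=1$.
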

\begin{proof}
This follows from Propositions \ref{propH}, \ref{propI}, \ref{propJJ} and \ref{propJ}, together with the estimate (\ref{eqD}).
\end{proof}

\section{Height integrals III}\label{ass}
In this section, we study the height integral $ \mathsf{Z}_{2,\varepsilon}(\mathbf{s}) $ from Lemma \ref{lemma decomposition}.\par
Let $ Y\subset X $ be the induced  compactification of $ \mathrm{U}\subset G $, i.e., the Zariski closure of $ \mathrm{U} $ in $ X $, and we denote by $\mathcal{Y}\, (\mathrm{resp}.\: \mathcal{D}^{\mathcal{Y}}) $ the Zariski closure of $Y\,(\mathrm{resp}.\: D^{Y}) $ in $ \mathcal{X} $ where $D^{Y}:= Y\backslash \mathrm{U} $.
\begin{assu}[{\cite[Assumption 4.7]{ST04}}] \label{assu}
We assume that the boundary $D^{Y} $ is a strict normal crossing divisor whose components are intersections of the boundary components of $  X$ with $ Y $:
\begin{equation*}
Y\backslash \mathrm{U}=\bigcup_{\alpha\in \mathcal{A}^{Y}}D_{\alpha}^{Y}
\end{equation*}
where $ \mathcal{A}^{Y}\subseteq \mathcal{A} $  and $ D_{\alpha}^{Y}= D_{\alpha}\cap Y $ for all $ \alpha\in \mathcal{A}^{Y} $. 
\end{assu} 
We have for the anticanonical divisor of $ Y $
\begin{equation*}
-K_{Y}=\sum_{\alpha\in \mathcal{A}^{Y}}\kappa_{\alpha}^{Y}D_{\alpha}^{Y},
\end{equation*}
with
\begin{equation}\label{4}
\kappa_{\alpha}^{Y}\leq \kappa_{\alpha}
\end{equation}
for all $ \alpha\in \mathcal{A}^{Y}  $ \cite[p.22]{ST04}. For nonzero $ a\in \mathbb{Q} $ denote by $ f_{a} $ the linear form on $ G $
\begin{equation*}
z\mapsto a\cdot z.
\end{equation*}
The linear form $ f_{a} $ defines an automorphic character $ \psi_{a} $ of $ \mathrm{U}(\mathbb{A}_{\mathbb{Q}})/ \mathrm{U}(\mathbb{Q}) $ by
\begin{equation*}
\psi_{a}\left(g(0,z,y)\right)=\psi_{1}(az)
\end{equation*}
where $ \psi_{1} $ is the Tate character \cite{Tate}. Write the divisor of $ f_{a} $ as
\begin{equation*}
\mathrm{div}(f_{a})=E(f_{a})-\sum_{\alpha\in \mathcal{A}^{Y}}d_{\alpha}(f_{a})D_{\alpha}^{Y}
\end{equation*}
where $ E(f_{a}) $ is the hyperplane along which $ f_{a} $ vanishing in $ Y $. Define
\begin{equation*}
\mathcal{A}^{0}:=\{\alpha\in \mathcal{A}^{Y}|d_{\alpha}(f_{a})=0\},
\end{equation*}
\begin{equation*}
\mathcal{A}^{\geq 1}:=\{\alpha\in \mathcal{A}^{Y}|d_{\alpha}(f_{a})\geq 1\},
\end{equation*}
note that the set $ \mathcal{A}^{0} $ is a proper subset of $ \mathcal{A} $ by \cite[p.22]{ST04}. For $\psi_{a} $ with $a\in \frac{1}{n(\mathsf{K})}\mathbb{Z}$ (see (\ref{nk}) for $ n(\mathsf{K}) $), we define $ S_{\psi_{a}} $ the set of places by 
\begin{equation}\label{definition of psi}
S_{\psi_{a}}:=\{\text{prime}\: p:  p| n(\mathsf{K})a\}\cup S.
\end{equation}
For every place $ v\in \Val(\mathbb{Q}) $ we denote by $ |a|_{v} $ the $ v $-adic absolute value of $ a $ and denote
\begin{equation*}
|a|_{\mathrm{fin}}=\prod_{p\in\Val(\mathbb{Q})_{\mathrm{fin}}}|a|_{p},
\end{equation*}
then by the product formula we have
\begin{equation}\label{eqE}
|a|_{\infty}=|a|_{\mathrm{fin}}^{-1}.
\end{equation}
We are going to analyse the integral from Lemma \ref{lemma decomposition}
\begin{equation}
\int_{G(\mathbb{A}_{\mathbb{Q}})}\mathsf{H}(\mathbf{s},g)^{-1}\overline{\omega}^{\psi_{a}}(g)\delta_{\varepsilon}(g)\mathrm{d}g.
\end{equation}
Fix a nonzero $ a\in \mathbb{Q} $, we write the character $ \psi_{a}=\prod\limits_{v\in \Val(\mathbb{Q})}\psi_{a,v} $. For a place $ p\not \in S_{\psi_{a}} $, recall from \cite[Definition 3.14]{ST04} that the normalized spherical function $ f_{p} $ on $ G(\mathbb{Q}_{p}) $ is defined to be
\begin{equation*}
f_{p}(g_{p})=\langle \pi_{\psi_{a,p}}(g_{p})e_{p}, e_{p}\rangle,
\end{equation*}
where $ \langle \cdot,\cdot \rangle $ is the standard inner product on $ \mathsf{L}^{2}(\mathbb{Q}_{p}) $, $ \pi_{\psi_{a,p}} $ is the representation of $ G(\mathbb{Q}_{p}) $ induced by a nontrivial character $ \psi_{a,p} $ of $ \mathbb{Q}_{p} $ and $ e_{p}\in V_{p}^{\mathsf{K}_{p}} $ is a normalized vector of the $ \mathsf{K}_{p} $-invariant subspace of a representation space $ V_{p} $ for $ \pi_{\psi_{a,p}} $, for $ \mathsf{K}_{p} $ see Lemma \ref{lemma good reduction}.\par
We denote
\begin{equation*}
\mathbb{A}^{S_{\psi_{a}}}={\prod}'_{v\not\in S_{\psi_{a}}}\mathbb{Q}_{v},\qquad \mathbb{A}_{S_{\psi_{a}}}=\prod_{v\in S_{\psi_{a}}}\mathbb{Q}_{v},
\end{equation*}
where $ \prod' $ means the restricted product. \par
There are decompositions
\begin{equation*}
G(\mathbb{A})=G(\mathbb{A}^{S_{\psi_{a}}})\cdot G(\mathbb{A}_{S_{\psi_{a}}})={\prod}'_{v\not \in S_{\psi_{a}}}G(\mathbb{Q}_{v})\cdot \prod_{v \in S_{\psi_{a}}}G(\mathbb{Q}_{v})
\end{equation*}
and for $ g\in G(\mathbb{A}) $,
\begin{equation*}
g=g^{S_{\psi_{a}}}\cdot g_{S_{\psi_{a}}}={\prod}_{v\not \in S_{\psi_{a}}}g_{v}\cdot \prod_{v\in S_{\psi_{a}}}g_{v}.
\end{equation*}
From \cite[Lemma 3.15, Corollary 3.16]{ST04} we have
\begin{equation}
\int_{G(\mathbb{A}_{\mathbb{Q}})}\mathsf{H}(\mathbf{s},g)^{-1}\overline{\omega}^{\psi_{a}}(g)\delta_{\varepsilon}(g)\mathrm{d}g
\end{equation}
\begin{equation*}
=\left(\prod_{p\not \in S_{\psi_{a}}}\int_{G(\mathbb{Q}_{p})}\mathsf{H}_{p}(\mathbf{s},g_{p})^{-1}f_{p}(g_{p})\delta_{\varepsilon,p}(g_{p})\mathrm{d}g_{p}\right) \int_{G(\mathbb{A}_{S_{\psi_{a}}})}\mathsf{H}(\mathbf{s},g_{S_{\psi_{a}}})^{-1}\overline{\omega_{S_{\psi_{a}}}}^{\psi_{a}}(g_{S_{\psi_{a}}})\delta_{\varepsilon}(g_{S_{\psi_{a}}})\mathrm{d}g_{S_{\psi_{a}}},
\end{equation*}
where $ \omega_{S_{\psi_{a}}} $ is the restriction of $ \omega $ to $ G(\mathbb{A}_{S_{\psi_{a}}}) $.\par 
As in the cases for $ \mathsf{Z}_{0,\varepsilon}(\mathbf{s},g) $ and $ \mathsf{Z}_{1,\varepsilon}(\mathbf{s},g) $, we write
\begin{equation*}
D^{Y}\otimes_{\mathbb{Q}}\mathbb{Q}_{v}=\bigcup_{\beta\in \mathcal{A}_{v}^{Y}}D_{v,\beta}^{Y},
\end{equation*}
\begin{equation*}
D_{\alpha}^{Y}\otimes_{\mathbb{Q}}\mathbb{Q}_{v}=\bigcup_{\beta\in \mathcal{A}_{v}^{Y}(\alpha)}D_{v,\beta}^{Y}
\end{equation*}
for the decompositions into irreducible components.\par
Let $ \beta\in \mathcal{A}_{v}^{Y} $, we denote the field of definition for one of the geometric irreducible components of $ D_{v,\beta}^{Y} $ by $ \mathbb{Q}_{v,\beta}^{Y} $ and for any subset $ B\subseteq \mathcal{A}_{v}^{Y} $, we define
\begin{equation*}
D_{v,B}^{Y}:=\bigcap_{\beta\in B}D_{v,\beta}^{Y},\qquad D_{v,B}^{\circ,Y}:=D_{v,B}^{Y}\backslash \bigcup_{B\subsetneq B'\subset \mathcal{A}_{v}^{Y}}\left(\bigcap_{\beta\in B'}D_{v,\beta}^{Y}\right)
\end{equation*}
where we set that $ D_{v,\varnothing}^{Y}=X_{\mathbb{Q}_{v}} $ and $ D_{v,\varnothing}^{\circ,Y}=G_{\mathbb{Q}_{v}} $. For $ v\not \in S $, we denote by $ \mathcal{D}_{v,B}^{Y} $ the Zariski closure of $D_{v,B}^{Y} $ in $ \mathcal{Y}\otimes_{\mathbb{Z}_{S}}\mathbb{Z}_{v} $. We define $ \mathcal{D}_{v,B}^{\circ,Y} $ analogously.

\subsection{Places not in $ S_{\psi_{a}} $}
Let $ p\not \in S_{\psi_{a}} $. By \cite[Lemma 3.17]{ST04}, we have
\begin{equation*}
\mathsf{Z}_{2,\varepsilon,p}(\mathbf{s},\omega^{\psi_{a}}):=\int_{G(\mathbb{Q}_{p})}\mathsf{H}_{p}(\mathbf{s},g_{p})^{-1}f_{p}(g_{p})\delta_{\varepsilon,p}(g_{p})\mathrm{d}g_{p}=\int_{\mathrm{U}(\mathbb{Q}_{p})}\mathsf{H}_{p}(\mathbf{s},u_{p})^{-1}\psi_{a,p}(u_{p})\delta_{\varepsilon,p}(u_{p})\mathrm{d}u_{p}.
\end{equation*}
\subsubsection{Places of good reduction}
We assume the integral model $ \mathcal{Y} $ has good reduction at $ p $ and the metrics at $ p $ are induced by our integral model. For a nonzero $ a\in \mathbb{Q} $, we denote by $ v_{p}(a) $ the $ p $-adic valuation of $ a $. There are two cases, $ v_{p}(a)=0 $ or $ v_{p}(a)\neq 0 $. We first assume $ v_{p}(a)=0 $.
\begin{prop}\label{propK}
We have
\begin{enumerate}
\item There exists $\delta>0$, independent of $ a $ such that the function
\begin{equation*}
\mathbf{s}\mapsto \prod_{\alpha\in \mathcal{A}^{0}}\prod_{\beta\in \mathcal{A}_{p}^{Y}(\alpha)}\zeta_{\mathbb{Q}_{p,\beta}^{Y}}(m_{\alpha}(s_{\alpha}-\kappa_{\alpha}^{Y}+1))^{-1}\mathsf{Z}_{2,\varepsilon,p}(\mathbf{s},\omega^{\psi_{a}})
\end{equation*}
is holomorphic on $ \mathsf{T}_{>-\delta} $. If $ \varepsilon_{\alpha}=1 $, we take $ \zeta_{\mathbb{Q}_{p,\beta}^{Y}}(m_{\alpha}(s_{\alpha}-\kappa_{\alpha}^{Y}+1))^{-1}=1 $.
\item There exists $\delta'>0$, independent of $ a $ such that
\begin{equation*}
\prod_{\alpha\in \mathcal{A}^{0}}\prod_{\beta\in \mathcal{A}_{p}^{Y}(\alpha)}\zeta_{\mathbb{Q}_{p,\beta}^{Y}}(m_{\alpha}(s_{\alpha}-\kappa_{\alpha}^{Y}+1))^{-1}\mathsf{Z}_{2,\varepsilon,p}(\mathbf{s},\omega^{\psi_{a}})=1+O(p^{-(1+\delta')}),
\end{equation*}
for $\mathbf{s}\in \mathsf{T}_{>-\delta} $.
\end{enumerate}
\end{prop}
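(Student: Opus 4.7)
The plan is to mirror the stratification arguments of Propositions~\ref{alphabeta} and \ref{propH}, carried out now on the subgroup $\mathrm{U}$ and its compactification $Y$. By \cite[Lemma 3.17]{ST04}, which was invoked just before the proposition, we have already reduced to
\[
\mathsf{Z}_{2,\varepsilon,p}(\mathbf{s},\omega^{\psi_a})=\int_{\mathrm{U}(\mathbb{Q}_p)}\mathsf{H}_p(\mathbf{s},u_p)^{-1}\psi_{a,p}(u_p)\delta_{\varepsilon,p}(u_p)\,\mathrm{d}u_p,
\]
so only this oscillatory $p$-adic integral on the $2$-dimensional unipotent subgroup remains to be estimated.

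First I would apply the reduction map $\rho_p\colon \mathrm{U}(\mathbb{Q}_p)\to \mathcal{Y}(\mathbb{F}_p)$ to obtain
\[
\mathsf{Z}_{2,\varepsilon,p}(\mathbf{s},\omega^{\psi_a})=\sum_{B\subset\mathcal{A}_p^Y}\sum_{y\in\mathcal{D}^{\circ,Y}_{p,B}(\mathbb{F}_p)}\int_{\rho_p^{-1}(y)}\mathsf{H}_p(\mathbf{s}-\mathbf{\kappa}^Y,u_p)^{-1}\psi_{a,p}(u_p)\delta_{\varepsilon,p}(u_p)\,\mathrm{d}\tau,
\]
where $\mathbf{\kappa}^Y=(\kappa_\alpha^Y)_{\alpha\in\mathcal{A}^Y}$ and $\mathrm{d}\tau$ denotes the Tamagawa measure. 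I would then treat four cases by the size of $B$ and by whether $\alpha(\beta)\in\mathcal{A}^0$ or $\mathcal{A}^{\geq 1}$. The case $B=\varnothing$ contributes $1$, exactly as in Proposition~\ref{alphabeta}. For $B=\{\beta\}$ with $\alpha(\beta)\in\mathcal{A}^0$, the hypotheses $d_{\alpha(\beta)}(f_a)=0$ and $v_p(a)=0$ together imply that $\psi_{a,p}$ is trivial on $\rho_p^{-1}(y)$, since $f_a$ then takes values in $\mathbb{Z}_p$ there; the local computation thus coincides with the one in Proposition~\ref{alphabeta}, and combined with the Lang--Weil estimate for $\mathcal{D}^{\circ,Y}_{p,B}$ (of dimension $1$) contributes $p^{-m_{\alpha(\beta)}(s_{\alpha(\beta)}-\kappa_{\alpha(\beta)}^Y+1)}(1+O(p^{-\delta_1}))$. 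For $B=\{\beta\}$ with $\alpha(\beta)\in\mathcal{A}^{\geq 1}$, I would pass to local analytic coordinates and apply \cite[Lemma 5.4]{PSTVA19} exactly as in Proposition~\ref{propH} to show that each inner integral either vanishes or has size $O(p^{-(2+\delta_2)})$; the Lang--Weil bound on $\mathcal{D}^{\circ,Y}_{p,B}$ then yields a total contribution of $O(p^{-(1+\delta_4)})$. For $\#B\geq 2$, the stratum $\mathcal{D}^{\circ,Y}_{p,B}$ has dimension at most $0$ in $\mathcal{Y}$, and the usual trivial bound on the inner integral gives $O(p^{-(1+\delta_5)})$.

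Summing the four cases gives
\[
\mathsf{Z}_{2,\varepsilon,p}(\mathbf{s},\omega^{\psi_a})=1+\sum_{\alpha\in\mathcal{A}^0}\sum_{\substack{\beta\in\mathcal{A}_p^Y(\alpha)\\ f_{p,\beta}=1}}p^{-m_\alpha(s_\alpha-\kappa_\alpha^Y+1)}+O(p^{-(1+\delta')})
\]
for $\mathbf{s}\in\mathsf{T}_{>-\delta}$ with $\delta,\delta'>0$ sufficiently small, from which both assertions follow upon multiplying by the reciprocal local zeta factors exactly as in Proposition~\ref{propG}.

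The main obstacle is not any single estimate but ensuring uniformity of the parameters $\delta,\delta_i$ in $a$: the bounds from \cite[Lemma 5.4]{PSTVA19} depend on the multiplicities $d_\alpha(f_a)$, and the required uniformity comes, as in the proof of Proposition~\ref{propI}, from the fact that these multiplicities take only finitely many values by upper semicontinuity on $\mathrm{Z}\setminus\{0\}$. Beyond this point, the proof is essentially bookkeeping: throughout the argument of Proposition~\ref{propH}, one replaces $G,X,\kappa_\alpha,\mathcal{A}$ and ambient dimension $3$ by $\mathrm{U},Y,\kappa_\alpha^Y,\mathcal{A}^Y$ and dimension $2$, and exploits that the character $\psi_{a,p}$ is trivial on $\mathcal{A}^0$-strata precisely because $v_p(a)=0$.
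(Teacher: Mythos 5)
Your proposal is essentially the same as the paper's (one-line) proof, which just declares the argument analogous to Proposition~\ref{propH}; you have correctly filled in the details by replacing $G,X,\mathcal{A},\kappa_{\alpha}$, ambient dimension $3$ by $\mathrm{U},Y,\mathcal{A}^{Y},\kappa_{\alpha}^{Y}$, ambient dimension $2$, and using the \cite[Lemma~3.17]{ST04} reduction to an integral on $\mathrm{U}(\mathbb{Q}_p)$, the stratification by $\mathcal{D}^{\circ,Y}_{p,B}$, triviality of $\psi_{a,p}$ on $\mathcal{A}^{0}$-strata when $v_p(a)=0$, and the Lang--Weil bounds. One small imprecision: for $B=\{\beta\}$ with $\alpha(\beta)\in\mathcal{A}^{\geq 1}$ you assert that each inner integral is either zero or of size $O(p^{-(2+\delta_2)})$, but for $y\in E(f_a)(\mathbb{F}_p)$ the trivial bound only gives $O(p^{-(1+\delta_3)})$ — the savings there come from the Lang--Weil count $\#(\mathcal{D}^{\circ,Y}_{p,B}\cap E(f_a))(\mathbb{F}_p)=O(1)$, not from the inner integral itself; the two sub-cases should be kept separate as in Proposition~\ref{propH}, but your final total $O(p^{-(1+\delta_4)})$ is nonetheless correct.
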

\begin{proof}
The proof is analogous to that of Proposition \ref{propH}.
\end{proof}

Next we assume $ v_{p}(a)\neq 0 $.
\begin{prop}\label{propL}
Let $ p $ be a nonarchimedean place such that $ p\not \in S_{\psi_{a}} $ and $ j_{p}(a)\neq 0 $. Then there is a constant $ \delta>0 $, independent of $ a $, such that the following function
\begin{equation*}
\mathbf{s}\mapsto \prod_{\alpha\in \mathcal{A}^{0}}\prod_{\beta\in \mathcal{A}_{p}^{Y}(\alpha)}\zeta_{\mathbb{Q}_{p,\beta}^{Y}}(m_{\alpha}(s_{\alpha}-\kappa_{\alpha}^{Y}+1))^{-1}\mathsf{Z}_{2,\varepsilon,p}(\mathbf{s},\omega^{\psi_{a}})
\end{equation*}
is holomorphic on $ \mathsf{T}_{>-\delta} $ and moreover there exists $\delta'>0$, independent of $ a $ such that
\begin{equation*}
\left| \prod_{\alpha\in \mathcal{A}^{0}}\prod_{\beta\in \mathcal{A}_{p}^{Y}(\alpha)}\zeta_{\mathbb{Q}_{p,\beta}^{Y}}(m_{\alpha}(s_{\alpha}-\kappa_{\alpha}^{Y}+1))^{-1}\mathsf{Z}_{2,\varepsilon,p}(\mathbf{s},\omega^{\psi_{a}})\right| \ll \left(1+\left|a\right|_{p}^{-1}\right)^{\delta'}.
\end{equation*}
\end{prop}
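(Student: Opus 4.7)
The plan is to mirror the proof of Proposition~\ref{propI}, specialised from $(X,D)$ to the two-dimensional induced compactification $(Y,D^{Y})$ of $\mathrm{U}$ and with the central character $\psi_{a,p}$ replacing $\eta_{\mathbf{a}}$. By \cite[Lemma 3.17]{ST04} the local factor reduces to
\begin{equation*}
\mathsf{Z}_{2,\varepsilon,p}(\mathbf{s},\omega^{\psi_{a}})=\int_{\mathrm{U}(\mathbb{Q}_{p})}\mathsf{H}_{p}(\mathbf{s},u_{p})^{-1}\psi_{a,p}(u_{p})\delta_{\varepsilon,p}(u_{p})\mathrm{d}u_{p},
\end{equation*}
and stratifying by the reduction map $\rho_{p}:\mathrm{U}(\mathbb{Q}_{p})\to \mathcal{Y}(\mathbb{F}_{p})$ gives
\begin{equation*}
\mathsf{Z}_{2,\varepsilon,p}(\mathbf{s},\omega^{\psi_{a}})=\sum_{B\subset \mathcal{A}_{p}^{Y}}\sum_{y\in \mathcal{D}_{p,B}^{\circ,Y}(\mathbb{F}_{p})}\int_{\rho_{p}^{-1}(y)}\mathsf{H}_{p}(\mathbf{s}-\mathbf{\kappa}^{Y},u_{p})^{-1}\psi_{a,p}(u_{p})\delta_{\varepsilon,p}(u_{p})\mathrm{d}\tau.
\end{equation*}

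Next I would analyze the terms by the size of $B$, in parallel with Proposition~\ref{propI}. For $B=\varnothing$ the inner integral is a constant. For $B=\{\beta\}$ with $\alpha(\beta)\in \mathcal{A}^{0}$, the character $\psi_{a,p}$ is trivial on each fibre and the Proposition~\ref{propK} computation reproduces the factor $p^{-m_{\alpha(\beta)}(s_{\alpha(\beta)}-\kappa_{\alpha(\beta)}^{Y}+1)}(c+O(p^{-\delta_{1}}))$. For $B=\{\beta\}$ with $\alpha(\beta)\in \mathcal{A}^{\geq 1}$, I would choose analytic local coordinates on $\rho_{p}^{-1}(y)$ in which $D_{\alpha(\beta)}^{Y}$ is cut out by a single coordinate $x_{p}$ and $f_{a}$ has a pole of order $d_{\alpha(\beta)}(f_{a})$ along it. For $y\notin E(f_{a})(\mathbb{F}_{p})$, applying \cite[Lemma 5.4]{PSTVA19} converts the inner integral into a series of character integrals $\int_{\mathbb{Z}_{p}^{\times}}\psi_{a,p}(p^{-d\ell+v_{p}(a)}x^{d})\mathrm{d}x$, which vanish except for at most one value of $\ell$ and so contribute $O(|v_{p}(a)|/p)$ per fibre; for $y\in E(f_{a})(\mathbb{F}_{p})$ the estimate via the replacement of $\mathbf{s}$ by $\Re(\mathbf{s})$ gives $O(p^{-(2-\delta_{0})})$ per fibre for small $\delta_{0}>0$. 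The Lang--Weil estimates $\#(\mathcal{D}_{p,B}^{\circ,Y}\setminus E(f_{a}))(\mathbb{F}_{p})=O(p)$ and $\#(\mathcal{D}_{p,B}^{\circ,Y}\cap E(f_{a}))(\mathbb{F}_{p})=O(1)$ (adapted to $\dim Y=2$) then bound the total $\alpha(\beta)\in \mathcal{A}^{\geq 1}$ contribution by $O(|v_{p}(a)|)$. Terms with $\#B\geq 2$ are controlled by $O(p^{-(1+\delta)})$ exactly as in the proof of Proposition~\ref{propI}.

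The main obstacle is packaging the arithmetic factor $|v_{p}(a)|$ into the required uniform bound $(1+|a|_{p}^{-1})^{\delta'}$. Under the hypothesis $j_{p}(a)\neq 0$ the relevant range forces $v_{p}(a)\geq 1$, so $|a|_{p}^{-1}=p^{v_{p}(a)}$ and $|v_{p}(a)|=\log_{p}|a|_{p}^{-1}\ll_{\delta'}(1+|a|_{p}^{-1})^{\delta'}$ for any fixed $\delta'>0$. Uniformity of all implied constants in the exponents $d_{\alpha}(f_{a})$ follows from the upper semicontinuity recalled before \cite[Lemma 3.4.1]{CT-additive}, since these exponents take only finitely many values as $a$ ranges over $\mathbb{Q}^{\times}$. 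Finally the asserted holomorphy on $\mathsf{T}_{>-\delta}$ is obtained by cancelling the factors $\zeta_{\mathbb{Q}_{p,\beta}^{Y}}(m_{\alpha}(s_{\alpha}-\kappa_{\alpha}^{Y}+1))^{-1}$ for $\alpha\in \mathcal{A}^{0}$ against the only potential simple poles of the stratification, namely those coming from the $B=\{\beta\}$, $\alpha(\beta)\in \mathcal{A}^{0}$ contributions.
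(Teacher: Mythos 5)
Your proof mirrors the paper's proof, which is given only as a one-line reference to Proposition~\ref{propI}; the reduction via \cite[Lemma 3.17]{ST04} to an integral over $\mathrm{U}(\mathbb{Q}_p)$, the stratification by the reduction map to $\mathcal{Y}(\mathbb{F}_p)$, the case analysis on $\#B$, and the dimension-shifted Lang--Weil counts are all the expected adaptations of \cite[Proposition 8.2]{PSTVA19} to the two-dimensional compactification $Y$. The overall approach is correct and is what the paper intends.

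One detail in your final step is backward. You claim that $p\notin S_{\psi_a}$ together with $v_p(a)\neq 0$ forces $v_p(a)\geq 1$, and then use $|v_p(a)|=\log_p|a|_p^{-1}$. But by the definition (\ref{definition of psi}), $p\notin S_{\psi_a}$ means $p\nmid n(\mathsf{K})a$, i.e.\ $v_p(n(\mathsf{K})a)=0$. If $p\notin S'$ this gives $v_p(a)=0$, so Proposition~\ref{propL} is vacuous there; if $p\in S'\setminus S$ it gives $v_p(a)=-n_p<0$. Thus in the nonvacuous range $v_p(a)$ is negative (not $\geq 1$), $|a|_p^{-1}=p^{v_p(a)}<1$, and the identity $|v_p(a)|=\log_p|a|_p^{-1}$ fails. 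The required estimate still holds, but for a different reason: $p$ ranges over the finite set $S'\setminus S$, so $|v_p(a)|\leq\max_{p\in S'}n_p$ is a fixed constant, and $(1+|a|_p^{-1})^{\delta'}\geq 1$, so $O(|v_p(a)|)\ll(1+|a|_p^{-1})^{\delta'}$ trivially. You should replace the $\log$ argument by this observation (or simply note the proposition's nontrivial content is confined to the finitely many $p\in S'\setminus S$ where the exponents $n_p$ are uniformly bounded). The rest of the proposal is sound.
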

\begin{proof}
The proof is analogous to that of Proposition \ref{propI}.
\end{proof}

\subsubsection{Places of bad reduction}
Again let $ p\not \in S_{\psi_{a}} $. We assume the integral model $ \mathcal{Y} $ has bad reduction at $ p $ or the metrics at $ p $ are not induced by our integral model.
\begin{prop}\label{propLL}
The function $ \mathsf{Z}_{2,\varepsilon,p}(\mathbf{s},\omega^{\psi_{a}}) $ is holomorphic in $ \mathbf{s} $ when $ \Re(s_{\alpha})>\kappa_{\alpha}-1 $ for $ \alpha\in \mathcal{A}^{0} $ with $ \varepsilon_{\alpha}<1 $. For $ \delta>0 $ there are constants $ \delta'>0 $ and $ C_{\delta}>0 $ such that
\begin{equation*}
\vert \mathsf{Z}_{2,\varepsilon,p}(\mathbf{s},\omega^{\psi_{a}})\vert < C_{\delta}\left(1+\left|a\right|_{\infty}\right)^{\delta'}
\end{equation*}
when $ \Re(s_{\alpha})>\kappa_{\alpha}-1+\delta $ for $ \alpha\in \mathcal{A}^{0} $ with $\varepsilon_{\alpha}<1 $.
\end{prop}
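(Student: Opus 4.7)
The plan is to mirror the strategy already used in the proof of Proposition \ref{propJJ}, exploiting the extra structure provided by \cite[Lemma 3.17]{ST04} to replace the integral over $G(\mathbb{Q}_p)$ by one over the abelian subgroup $\mathrm{U}(\mathbb{Q}_p)$. Concretely, since $p \notin S_{\psi_a}$, one has the identity
\begin{equation*}
\mathsf{Z}_{2,\varepsilon,p}(\mathbf{s},\omega^{\psi_{a}}) = \int_{\mathrm{U}(\mathbb{Q}_{p})} \mathsf{H}_{p}(\mathbf{s}, u_{p})^{-1} \psi_{a,p}(u_{p}) \delta_{\varepsilon,p}(u_{p}) \, \mathrm{d}u_{p}.
\end{equation*}
By Assumption \ref{assu}, the Zariski closure $Y$ of $\mathrm{U}$ in $X$ is a smooth equivariant compactification of the vector group $\mathrm{U}$ whose boundary is a strict normal crossings divisor whose components are inherited from those of $X$, so the integral fits exactly into the vector-group framework of \cite{CT-additive, VecIII, PSTVA19}.

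For the holomorphicity assertion I would follow the arguments in the proof of \cite[Lemma 4.1]{volume} (see also \cite[Lemma 8.2]{VecIII}): apply a partition of unity on $Y(\mathbb{Q}_p)$, choose local analytic coordinates adapted to the boundary strata, and verify absolute convergence of each local piece. For boundary components of $Y$ indexed by $\alpha \in \mathcal{A}^0$ with $\varepsilon_\alpha < 1$, the shift $\Re(s_\alpha) > \kappa_\alpha - 1$ together with the inequality $\kappa_\alpha^Y \leq \kappa_\alpha$ from (\ref{4}) and the valuation condition imposed by $\delta_{\varepsilon,p}$ give convergence; components with $\varepsilon_\alpha = 1$ contribute nothing to the integration region, and components with $\alpha \in \mathcal{A}^{\geq 1}$ are controlled using the oscillation of the character $\psi_{a,p}$ exactly as in the references.

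For the quantitative estimate I would invoke \cite[Proposition 8.3]{PSTVA19} directly. As already noted in the proof of Proposition \ref{propJJ}, the argument there uses only the affine-space structure together with an additive character and not the ambient nonabelian group action. Since $\mathrm{U}$ is a two-dimensional vector group and $\psi_{a,p}$ is additive, the estimate transfers verbatim. The naturally arising $p$-adic factor $|a|_p$ is converted into an archimedean bound via the product formula $|a|_\infty = |a|_{\mathrm{fin}}^{-1}$ (see \eqref{eqE}), producing the desired bound $C_\delta (1 + |a|_\infty)^{\delta'}$.

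The main obstacle is bookkeeping rather than a substantive difficulty: one must check that $\delta$ and $\delta'$ can be chosen independently of $a$ and that the valuation conditions defining $\delta_{\varepsilon,p}$ interact cleanly with the stratification of $Y(\mathbb{Q}_p)$ used in the vector-group analysis. Both verifications are routine given that the inputs from \cite{volume, VecIII, PSTVA19} already handle the affine-space case uniformly in the character, and that Assumption \ref{assu} guarantees $Y$ has precisely the structure to which those results apply.
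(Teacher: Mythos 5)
Your proposal matches the paper's proof: the paper simply says the argument is analogous to Proposition~\ref{propJJ}, whose proof invokes \cite[Lemma 4.1]{volume} (or \cite[Lemma 8.2]{VecIII}) for holomorphicity and \cite[Proposition 8.3]{PSTVA19} for the quantitative bound, noting that only the affine-space structure and the additive character are used. You correctly identify the reduction to an integral over the vector group $\mathrm{U}(\mathbb{Q}_p)$ via \cite[Lemma 3.17]{ST04}, which the paper sets up just before the subsection, and the use of the product formula to obtain the archimedean bound, so this is essentially the same route.
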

\begin{proof}
The proof is analogous to that of Proposition \ref{propJJ}.
\end{proof}

\subsection{Places in $S_{\psi_{a}}$}
Let $ v\in S_{\psi_{a}} $. We first compute $ \omega_{S_{\psi_{a}}}(g_{S_{\psi_{a}}}) $. Let $ \mathcal{S}(\mathbb{A}_{\mathbb{Q}})\subset \mathsf{L}^{2}(\mathbb{A}_{\mathbb{Q}}) $ denote the space of Schwartz-Bruhat functions. Recall from \cite[\S 3]{ST04} that for a function $ \varphi\in \mathcal{S}(\mathbb{A}_{\mathbb{Q}}) $, the theta distribution is defined as
\begin{equation}\label{6.5}
\Theta(\varphi):=\sum_{\gamma\in \mathbb{Q}}\varphi(\gamma).
\end{equation}
The theta distribution gives a map
\begin{equation*}
j_{\psi_{a}}: \mathcal{S}(\mathbb{A}_{\mathbb{Q}})\rightarrow \mathsf{L}^{2}(G(\mathbb{Q})\backslash G(\mathbb{A}_{\mathbb{Q}}))
\end{equation*}
defined by
\begin{equation*}
j_{\psi_{a}}(\varphi)(g)=\Theta(\pi_{\psi_{a}}(g)\varphi)
\end{equation*}
where $ \pi_{\psi_{a}} $ is the representation of $ G(\mathbb{Q}) $ induced by $ \psi_{a} $  \cite[\S 3.5]{ST04}. Let $ \mathsf{K}^{S_{\psi_{a}}}:=\prod_{p\not\in S_{\psi_{a}}}\mathsf{K}_{p}=\prod_{p\not\in S_{\psi_{a}}}G(\mathbb{Z}_{p}) $, see Lemma \ref{lemma good reduction} for $ \mathsf{K}_{p} $. From the last formula of the proof of \cite[Lemma 3.15]{ST04} we have
\begin{equation}\label{6.6}
\omega_{S_{\psi_{a}}}(g_{S_{\psi_{a}}})=\int_{\mathsf{K}^{S_{\psi_{a}}}}j_{\psi_{a}}(\varphi)(g_{S_{\psi_{a}}}k^{S_{\psi_{a}}})\mathrm{d}k^{S_{\psi_{a}}},
\end{equation}
where $ \omega_{S_{\psi_{a}}} $ is the restriction of $ \omega $ to $ G(\mathbb{A}_{S_{\psi_{a}}}) $ and $ g_{S_{\psi_{a}}}\in G(\mathbb{A}_{S_{\psi_{a}}}) $. Together with (\ref{6.5}) and (\ref{6.6}) we have
\begin{equation}\label{omegacom}
\omega_{S_{\psi_{a}}}(g_{S_{\psi_{a}}})=\int_{\mathsf{K}^{S_{\psi_{a}}}}\Theta\left(\pi_{\psi_{a}}(g_{S_{\psi_{a}}}k^{S_{\psi_{a}}})\varphi\right)\mathrm{d}k^{S_{\psi_{a}}}=\int_{\mathsf{K}^{S_{\psi_{a}}}}\sum_{\gamma\in \mathbb{Q}}\left(\pi_{\psi_{a}}(g_{S_{\psi_{a}}}k^{S_{\psi_{a}}})\varphi\right)(\gamma)\mathrm{d}k^{S_{\psi_{a}}}.
\end{equation}
Let
\begin{equation*}
\left( g(x_{v},z_{v},y_{v})\right)_{v\in S_{\psi_{a}}}\in  G(\mathbb{A}_{S_{\psi_{a}}}),
\end{equation*}
\begin{equation*}
\left( g(x_{k_{v}},z_{k_{v}},y_{k_{v}})\right)_{v\in S_{\psi_{a}}}\in  \mathsf{K}^{S_{\psi_{a}}},
\end{equation*}
then the multiplication is
\begin{equation*}
g(x_{v},z_{v},y_{v})\cdot g(x_{k_{v}},z_{k_{v}},y_{k_{v}})
=g(x_{v}+x_{k_{v}},z_{v}+z_{k_{v}}+x_{v}y_{k_{v}},y_{v}+y_{k_{v}}).
\end{equation*}
Recall from \cite[p.13, (3.6)]{ST04} that 
\begin{equation}\label{1}
\pi_{\psi_{a}}(g(0,0,y))\varphi(x)=\psi_{a}(y\cdot x)\varphi(x),
\end{equation}
\begin{equation}\label{2}
\pi_{\psi_{a}}(g(0,z,0))\varphi(x)=\psi_{a}(z)\varphi(x),
\end{equation}
\begin{equation}\label{3}
\pi_{\psi_{a}}(g(x',0,))\varphi(x)=\varphi(x+x').
\end{equation}
\cite[p.13, (3.6)]{ST04} is stated for $ \pi_{\psi_{a}}' $ which is unitarily equivalent to $ \pi_{\psi_{a}} $. This holds also for $ \pi_{\psi_{a}} $ since we can identify them in this setting.
\begin{lemm}\label{omega}
We have 
\begin{equation*}
\omega_{S_{\psi_{a}}}(g_{S_{\psi_{a}}})=\sum_{\gamma\in \mathbb{Z}_{S_{\psi_{a}}}}\prod_{v\in S_{\psi_{a}}}\psi_{a}(y_{v}\gamma+z_{v})\varphi(x_{v}+\gamma),
\end{equation*}
where $ \mathbb{Z}_{S_{\psi_{a}}} $ is the set of $ S_{\psi_{a}} $-integers.
\end{lemm}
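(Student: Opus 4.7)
The plan is to unwind equation (\ref{omegacom}) by first obtaining a closed form for $\pi_{\psi_a}(g(x',z',y'))\varphi$, and then carrying out the adelic integration over $\mathsf{K}^{S_{\psi_a}}$ one place at a time. Using the decomposition $g(x',z',y') = g(0,0,y')\,g(x',0,0)\,g(0,z',0)$ in $G$, and applying the three formulas (\ref{1}), (\ref{2}), (\ref{3}) in succession together with the fact that $\pi_{\psi_a}$ is a representation, one gets
\begin{equation*}
\pi_{\psi_a}(g(x',z',y'))\varphi(x) = \psi_a(z' + y'x)\,\varphi(x+x').
\end{equation*}
Plugging this into (\ref{omegacom}), and writing $\pi_{\psi_a} = \bigotimes_v \pi_{\psi_{a,v}}$ and $\varphi = \bigotimes_v \varphi_v$, the integrand factors over places.

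Next I would treat the integrals at places $v \notin S_{\psi_a}$. By the definition (\ref{definition of psi}) of $S_{\psi_a}$, one has $v \nmid a$ for such $v$, so $a \in \mathbb{Z}_v$ and the character $\psi_{a,v}$ is trivial on $\mathbb{Z}_v$; moreover at such an unramified place the normalized spherical vector in the Schr\"odinger model is $\varphi_v = \mathbf{1}_{\mathbb{Z}_v}$. Using the normalizations $\int_{\mathbb{Z}_v} dx_{k_v} = \int_{\mathbb{Z}_v} dy_{k_v} = \int_{\mathbb{Z}_v} dz_{k_v} = 1$, the local integral decouples into
\begin{equation*}
\left(\int_{\mathbb{Z}_v}\psi_{a,v}(z_{k_v})\,dz_{k_v}\right)\left(\int_{\mathbb{Z}_v}\psi_{a,v}(y_{k_v}\gamma)\,dy_{k_v}\right)\left(\int_{\mathbb{Z}_v}\mathbf{1}_{\mathbb{Z}_v}(x_{k_v}+\gamma)\,dx_{k_v}\right),
\end{equation*}
whose three factors evaluate, respectively, to $1$, to $\mathbf{1}_{\mathbb{Z}_v}(a\gamma)$ by character orthogonality, and to $\mathbf{1}_{\mathbb{Z}_v}(\gamma)$; since $a \in \mathbb{Z}_v$, the second factor is automatic once $\gamma \in \mathbb{Z}_v$.

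Assembling these evaluations, the product over $v \notin S_{\psi_a}$ contributes $\prod_{v \notin S_{\psi_a}} \mathbf{1}_{\mathbb{Z}_v}(\gamma)$, which is nonzero precisely when $\gamma \in \mathbb{Z}_v$ for every $v \notin S_{\psi_a}$, i.e.\ when $\gamma \in \mathbb{Z}_{S_{\psi_a}}$. This collapses the sum over $\gamma \in \mathbb{Q}$ to a sum over $\gamma \in \mathbb{Z}_{S_{\psi_a}}$, and what remains is the product over $v \in S_{\psi_a}$ of $\psi_{a,v}(z_v + y_v\gamma)\,\varphi_v(x_v + \gamma)$, which is the desired formula (with the abuse of notation $\varphi(x_v + \gamma) = \varphi_v(x_v + \gamma)$). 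The step that requires the most care is the identification $\varphi_v = \mathbf{1}_{\mathbb{Z}_v}$ at the unramified places $v \notin S_{\psi_a}$; this comes from the description of the spherical vector $e_p$ in the Schr\"odinger model of $\pi_{\psi_{a,p}}$ recorded in \cite[\S 3]{ST04}. The remaining manipulations are bookkeeping with (\ref{1})--(\ref{3}) and orthogonality of additive characters on $\mathbb{Z}_v$.
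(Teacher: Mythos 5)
Your proof is correct and follows the same route as the paper's: unwind $j_{\psi_a}(\varphi)(g) = \sum_{\gamma}\pi_{\psi_a}(g)\varphi(\gamma)$ via (\ref{1})--(\ref{3}), factor the integral over $\mathsf{K}^{S_{\psi_a}}$ place by place, and observe that the local integral at $v\notin S_{\psi_a}$ acts as the indicator of $\gamma\in\mathbb{Z}_v$, collapsing $\sum_{\gamma\in\mathbb{Q}}$ to $\sum_{\gamma\in\mathbb{Z}_{S_{\psi_a}}}$. You are in fact somewhat more explicit than the paper, which states the vanishing of the local integral for $\gamma\notin\mathbb{Z}_{S_{\psi_a}}$ without spelling out the factorization into the three one-variable integrals and the identification $\varphi_v=\mathbf{1}_{\mathbb{Z}_v}$ at places $v\notin S_{\psi_a}$.
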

\begin{proof}
By (\ref{omegacom}) and (\ref{1})-(\ref{3}) we have
\begin{equation*}
\omega_{S_{\psi_{a}}}(g_{S_{\psi_{a}}})
\end{equation*}
\begin{eqnarray*}
&=&\sum_{\gamma\in \mathbb{Q}}\prod_{v\in\Val(\mathbb{Q})}\int_{\mathsf{K}^{S_{\psi_{a}}}}\psi_{a}\left((y_{v}+y_{k_{v}})\gamma+z_{v}+z_{k_{v}}+x_{v}y_{k_{v}}\right)\varphi(x_{v}+x_{k_{v}}+\gamma)\mathrm{d}x_{k_{v}}\mathrm{d}y_{k_{v}}\mathrm{d}z_{k_{v}}\\
&=&\sum_{\gamma\in \mathbb{Q}}\left(\prod_{v\in S_{\psi_{a}}}\psi_{a}(y_{v}\gamma+z_{v})\varphi(x_{v}+\gamma)\cdot \prod_{v\not\in S_{\psi_{a}}}\int_{\mathsf{K}^{S_{\psi_{a}}}}\psi_{a}(y_{k_{v}}\gamma+z_{k_{v}})\varphi(x_{k_{v}}+\gamma)\mathrm{d}k^{S_{\psi_{a}}}\right).
\end{eqnarray*}
When $ \gamma\not\in \mathbb{Z}_{S_{\psi_{a}}}$,
\begin{equation*}
\int_{\mathsf{K}^{S_{\psi_{a}}}}\psi_{a}(y_{k_{v}}\gamma+z_{k_{v}})\varphi(x_{k_{v}}+\gamma)\mathrm{d}k^{S_{\psi_{a}}}=0,
\end{equation*}
therefore
\begin{equation*}
\omega_{S_{\psi_{a}}}(g_{S_{\psi_{a}}})=\sum_{\gamma\in \mathbb{Z}_{S_{\psi_{a}}}}\prod_{v\in S_{\psi_{a}}}\psi_{a}(y_{v}\gamma+z_{v})\varphi(x_{v}+\gamma).
\end{equation*}
\end{proof}

\begin{prop}\label{propM}
\begin{enumerate}
\item The function
\begin{equation*}
\mathbf{s}\mapsto \mathsf{Z}_{2,\varepsilon,S_{\psi_{a}}}(\mathbf{s},\omega^{\psi_{a}}):=\int_{G(\mathbb{A}_{S_{\psi_{a}}})}\mathsf{H}(\mathbf{s},g_{S_{\psi_{a}}})^{-1}\overline{\omega}_{S_{\psi_{a}}}^{\psi_{a}}(g_{S_{\psi_{a}}})\delta_{\varepsilon}(g_{S_{\psi_{a}}})\mathrm{d}g_{S_{\psi_{a}}}
\end{equation*}
is holomorphic in the domain $ \Re(s_{\alpha})>\kappa_{\alpha}-1+\delta $ for all $ \alpha\in \mathcal{A} $ and $ \delta>0 $. Moreover for any compact subset $ \mathcal{K} $ in the above domain, there exists $ n'>0 $ such that for any $ n>0 $ and $ \mathbf{s}\in\mathcal{K} $,
\begin{equation}\label{eestimate2}
\left| \mathsf{Z}_{2,\varepsilon,S_{\psi_{a}}}(\mathbf{s},\omega^{\psi_{a}})\right| \ll_{n,\mathcal{K}} \dfrac{(1+\vert a\vert)^{n'}}{(1+\vert\lambda\vert)^{n}}
\end{equation}
where $ \lambda=\lambda(\omega_{S_{\psi_{a}}}^{\psi_{a}}) $ is the eigenvalue of $ \omega_{S_{\psi_{a}}}^{\psi_{a}} $ described in the proof of \cite[Proposition 4.12]{ST04}. 
\item If $ L=\sum_{\alpha\in \mathcal{A}}\lambda_{\alpha}D_{\alpha} $ is a big divisor on $ X $, let
\begin{equation*}
a':=\tilde{a}'((X,D_{\mathrm{red}}),L),\quad b':=b(\mathbb{Q}_{v},(X,D_{\mathrm{red}}),L,f_{a})
\end{equation*}
be as defined in \cite[\S 4]{PSTVA19}. Then there is a constant $ \delta>0 $ such that the function
\begin{equation*}
s\mapsto \prod_{v\in S_{\psi_{a}}}\zeta_{\mathbb{Q}_{v}}(s-a')^{-b'}\mathsf{Z}_{2,\varepsilon,S_{\psi_{a}}}(sL,\omega^{\psi_{a}})
\end{equation*}
admits a holomorphic continuation to the domain  $ \Re(s)>a'-\delta $. Moreover for any compact subset $ \mathcal{K} $ in the above domain, there exists $ n'>0 $ such that for any $ n>0 $ and $ \mathbf{s}\in\mathcal{K} $,
\begin{equation}\label{estimate2}
\left| \left(\prod_{v\in S_{\psi_{a}}}\zeta_{\mathbb{Q}_{v}}(s-a')^{-b'}\right)\mathsf{Z}_{2,\varepsilon,S_{\psi_{a}}}(sL,\omega^{\psi_{a}})\right| \ll_{n,\mathcal{K}}\dfrac{(1+\vert a\vert)^{n'}}{(1+\vert\lambda\vert)^{n}}.
\end{equation}
\end{enumerate}
\end{prop}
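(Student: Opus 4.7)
The plan is to combine the explicit expansion of $\omega_{S_{\psi_a}}^{\psi_a}$ from Lemma \ref{omega} with the local integration techniques developed in Section \ref{sec1} and the integration-by-parts arguments of \cite[Proposition 4.12]{ST04}. Substituting the formula
\[
\overline{\omega}_{S_{\psi_a}}^{\psi_a}(g_{S_{\psi_a}}) = \sum_{\gamma \in \mathbb{Z}_{S_{\psi_a}}} \prod_{v \in S_{\psi_a}} \overline{\psi_a}(y_v\gamma + z_v)\,\overline{\varphi}(x_v + \gamma)
\]
into the defining integral and exchanging sum and integral (justified by absolute convergence in the stated domain), followed by the change of variable $x_v \mapsto x_v - \gamma$ at each $v \in S_{\psi_a}$, one reduces $\mathsf{Z}_{2,\varepsilon,S_{\psi_a}}(\mathbf{s},\omega^{\psi_a})$ to a sum of local integrals that are amenable to the analysis used in the preceding sections.

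For part (1), I would first establish holomorphic continuation by bounding the integrand in absolute value. Since $|\overline{\psi_a}| = 1$ and $\overline{\varphi}$ is Schwartz-Bruhat, the resulting majorant is dominated by integrals of the same shape as those treated in Proposition \ref{propJ} for $v \in S$ and Propositions \ref{propK}, \ref{propL}, \ref{propLL} for $v \in S_{\psi_a} \setminus S$, yielding holomorphy throughout $\Re(s_\alpha) > \kappa_\alpha - 1 + \delta$. The main obstacle is the rapid decay factor $(1+|\lambda|)^{-n}$: it is obtained through integration by parts at the archimedean place, using that $\omega^{\psi_a}$ is an eigenfunction with eigenvalue $\lambda$ of a Casimir-type differential operator $\Delta$ built from the Lie algebra of $G(\mathbb{R})$ acting through $\pi_{\psi_a}$. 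Applying $\Delta^n$ to $\omega^{\psi_a}$ produces a factor $\lambda^n$; transferring $\Delta^n$ onto the smooth function $\mathsf{H}_\infty(\mathbf{s},\cdot)^{-1}$ via repeated integration by parts costs only polynomial growth in $|\mathbf{s}|$, absorbed into the implicit constant on a compact set $\mathcal{K}$. The polynomial factor $(1+|a|)^{n'}$ arises from the $a$-dependence of the character $\psi_a$ (its frequency is proportional to $a$) and of the Schwartz-Bruhat function $\varphi$, both of which may be tracked explicitly through the estimates.

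For part (2), once (1) is established, the holomorphic continuation of $s \mapsto \mathsf{Z}_{2,\varepsilon,S_{\psi_a}}(sL, \omega^{\psi_a})$ past $\Re(s) = a'$ after extracting the product $\prod_{v\in S_{\psi_a}}\zeta_{\mathbb{Q}_v}(s-a')^{-b'}$ proceeds exactly as in Proposition \ref{propJ}(2). One applies a resolution of indeterminacies $\pi : \widetilde{Y} \to Y$ for $f_a$ on the induced compactification $Y$ of $\mathrm{U}$, chooses local coordinates adapted to the components of the exceptional divisor and the strict transforms of the boundary, and, using \cite[Proposition 2.4.1]{CT-additive}, integrates out a variable $x_{\alpha_0}$ for which $d_{\alpha_0}(f_a) > 0$; the crucial positivity (guaranteed by Corollary \ref{coro divisor}, since $\mathcal{A}^{\geq 1}$ is nonempty) then ensures absolute convergence beyond $a'$. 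The estimate (\ref{estimate2}) follows by combining (\ref{eestimate2}) with standard polynomial bounds on $\zeta$-factors along vertical strips.

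The principal technical obstacle is the rapid decay in $|\lambda|$ uniform in $a$: one must choose the differential operator so that its eigenvalue on $\omega^{\psi_a}$ genuinely controls the required parameter $\lambda$, and simultaneously ensure that the integration by parts introduces the $a$-dependence only as a polynomial factor rather than exponentially, so that the two separate estimates $(1+|a|)^{n'}$ and $(1+|\lambda|)^{-n}$ can be obtained independently.
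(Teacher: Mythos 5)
Your overall strategy matches the paper's: unfold $\overline{\omega}_{S_{\psi_a}}^{\psi_a}$ via Lemma~\ref{omega}, estimate the archimedean factor by integrating by parts against the Laplace operator $\triangle_{\psi_a}$ (whose eigenvalue is $\lambda$), and obtain the continuation past $\Re(s)=a'$ by the resolution-of-indeterminacies argument of \cite[\S 3.4]{CT-additive} keyed to the positivity of some $d_\alpha(f_a)$. That is the right skeleton. However, there are two concrete gaps.

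First, the step ``change of variable $x_v \mapsto x_v - \gamma$'' does not simplify the integral as you claim: the local heights $\mathsf{H}_v(\mathbf{s},\cdot)$ are $\mathsf{K}_v$-bi-invariant (Lemma~\ref{k-invariant}) but are \emph{not} invariant under left translation by an arbitrary $g(-\gamma,0,0)\in G(\mathbb{Q}_v)$, so after the change of variable the $\gamma$-dependence simply moves from the $\varphi$ factor into the height and nothing has been gained. The paper avoids this translation; it keeps $\varphi(x_v+\gamma)$ in place and lets the rapid decay of $\varphi_\infty$ (together with integration by parts at the archimedean place) control the sum. Second, and more importantly, your justification of the sum--integral interchange by ``absolute convergence in the stated domain'' is a hand-wave that hides the essential point: the index set $\mathbb{Z}_{S_{\psi_a}}$ is \emph{dense} in $\mathbb{R}$, so the sum $\sum_{\gamma\in\mathbb{Z}_{S_{\psi_a}}}$ is not a priori well-behaved. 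The paper first uses the $\mathsf{K}_v$-invariance of $\mathsf{H}_v$ at the finite places $v\in S_{\psi_a}$ to show that the $\gamma$-term vanishes unless $\gamma$ lies in a fixed $\mathbb{Z}$-lattice $\Lambda_3\subset\mathbb{Z}_{S_{\psi_a}}$; only after this reduction is the sum discrete enough for the archimedean integration-by-parts and \cite[Lemma 9.7]{ST15} to yield convergence. Without the $\Lambda_3$-reduction, your plan does not close. A minor further issue: you cite Propositions~\ref{propK}, \ref{propL}, \ref{propLL} to majorize the local integrals at $v\in S_{\psi_a}\setminus S$, but those propositions are stated for places $p\notin S_{\psi_a}$, so they do not apply there; the finitely many places in $S_{\psi_a}\setminus S$ are handled inside the proof of (2) together with the archimedean place.
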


\begin{proof}
\begin{enumerate}
\item The assertion on holomorphy follows from the proof of (2). We adopt the estimate (\ref{eestimate2}) from \cite[Proposition 4.12]{ST04}. Note that although \cite[Proposition 4.12]{ST04} is stated for $ \Re(s_{\alpha})>\kappa_{\alpha}-1/2+\delta $ where $ \delta>0 $, it holds also for $ \Re(s_{\alpha})>\kappa_{\alpha}-1 $ since the proof of \cite[Proposition 4.12]{ST04} relies on \cite[Lemma 4.1, Lemma 4.11]{ST04}. On the other hand, the proof of \cite[Lemma 4.11]{ST04} relies on \cite[Lemma 4.1, Proposition 4.4]{ST04} which hold for $ \Re(s_{\alpha})>\kappa_{\alpha}-1 $. 
\item By Lemma \ref{omega} we have
\begin{equation*}
\mathsf{Z}_{2,\varepsilon,S_{\psi_{a}}}(sL,\omega^{\psi_{a}})=\sum_{\gamma\in \mathbb{Z}_{S_{\psi_{a}}}}\int_{G(\mathbb{A}_{S_{\psi_{a}}})}\mathsf{H}(sL,g_{S_{\psi_{a}}})^{-1}\overline{\psi_{a}}(y\gamma+z)\varphi(x+\gamma)\delta_{\varepsilon}(g_{S_{\psi_{a}}})\mathrm{d}g_{S_{\psi_{a}}}
\end{equation*}
where $ \varphi $ is a Schwartz-Bruhat function as in \cite[\S 3.7]{ST04}. By Lemma \ref{k-invariant}, $ \mathsf{H}(sL,g) $ is $ \mathsf{K} $-invariant, in particular the $ v $-component $ \mathsf{H}_{v}(sL,g_{v}) $ is $ \mathsf{K}_{v} $-invariant for $ v\in S_{\psi_{a}} $. Therefore there is a $ \mathbb{Z} $-lattice $ \Lambda_{3}\subset \mathbb{Z}_{S_{\psi_{a}}} $ such that for all $ \gamma\not\in \Lambda_{3} $,
\begin{equation*}
\int_{G(\mathbb{A}_{S_{\psi_{a}}})}\mathsf{H}(sL,g_{S_{\psi_{a}}})^{-1}\overline{\psi_{a}}(y\gamma+z)\varphi(x+\gamma)\delta_{\varepsilon}(g_{S_{\psi_{a}}})\mathrm{d}g_{S_{\psi_{a}}}=0.
\end{equation*}
Thus
\begin{equation*}
\mathsf{Z}_{2,\varepsilon,S_{\psi_{a}}}(sL,\omega^{\psi_{a}})=\sum_{\gamma\in \Lambda_{3}}\int_{G(\mathbb{A}_{S_{\psi_{a}}})}\mathsf{H}(sL,g_{S_{\psi_{a}}})^{-1}\overline{\psi_{a}}(y\gamma+z)\varphi(x+\gamma)\delta_{\varepsilon}(g_{S_{\psi_{a}}})\mathrm{d}g_{S_{\psi_{a}}}.
\end{equation*}
Since the Schwartz-Bruhat function $ \varphi $ is a finite sum of pure tensors, we may assume that $ \varphi $ is itself a pure tensor, i.e., $ \varphi=\prod_{v\in S_{\psi_{a}}}\varphi_{v} $. Also we may assume $ \varphi_{\infty} $ is an eigenfunction for $ \vartriangle_{\psi_{a}} $, the Laplace operator associated with $ \psi_{a} $ (see \cite[\S 3.5]{ST04}). Now we have
\begin{equation*}
\int_{G(\mathbb{A}_{S_{\psi_{a}}})}\mathsf{H}(sL,g_{S_{\psi_{a}}})^{-1}\overline{\psi_{a}}(y\gamma+z)\varphi(x+\gamma)\delta_{\varepsilon}(g_{S_{\psi_{a}}})\mathrm{d}g_{S_{\psi_{a}}}
\end{equation*}
\begin{equation*}
=\prod_{v\in S_{\psi_{a}}\backslash\{\infty\}}\int_{G(\mathbb{Q}_{v})}\mathsf{H}_{v}(sL,g_{v})^{-1}\overline{\psi_{a,v}}(y_{v}\gamma+z_{v})\varphi_{v}(x_{v}+\gamma)\delta_{\varepsilon,v}(g_{v})\mathrm{d}g_{v}
\end{equation*}
\begin{equation}\label{archi}
\times\int_{G(\mathbb{R})}\mathsf{H}_{\infty}(sL,g_{\infty})^{-1}\overline{\psi_{a,\infty}}(y_{\infty}\gamma+z_{\infty})\varphi_{\infty}(x_{\infty}+\gamma)\mathrm{d}g_{\infty}.
\end{equation}
Integration by parts $ N $ times, the integral (\ref{archi}) is then majorized by
\begin{equation*}
\dfrac{1}{(1+\vert a\gamma\vert)^{N}}\dfrac{1}{\vert \lambda_{n}^{\psi_{a}}\vert^{N}}\left|\int_{G(\mathbb{R})}\left(\vartriangle_{\psi_{a}}^{N}\dfrac{\partial^{N}}{\partial y_{\infty}^{N}}\mathsf{H}(sL,g_{\infty})^{-1}\right)\varphi_{\infty}(x_{\infty}+\gamma)\overline{\psi_{a,\infty}}(y_{\infty}\gamma+z_{\infty})\mathrm{d}g_{\infty}\right|,
\end{equation*}
where $ \lambda_{n}^{\psi_{a}} $ is the eigenvalue of $ \vartriangle_{\psi_{a}} $ (see \cite[\S 3.5]{ST04}). Combining with \cite[Lemma 9.7]{ST15}, we conclude that for large $ N $,
\begin{equation*}
\mathsf{Z}_{2,\varepsilon,\infty}(sL,\omega^{\psi_{a}}):=\sum_{\gamma\in \Lambda_{3}}\int_{G(\mathbb{R})}\mathsf{H}_{\infty}(sL,g_{\infty})^{-1}\overline{\psi_{a,\infty}}(y_{\infty}\gamma+z_{\infty})\varphi_{\infty}(x_{\infty}+\gamma)\mathrm{d}g_{\infty}
\end{equation*}
is convergent.\par
Now the assertion on holomorphic continuation  follows from results on oscillatory integrals \cite[\S 3.4]{CT-additive} and the proof is analogous to that of Proposition \ref{propJ} (2). As Proposition \ref{propJ} (2) and \cite[Proposition 3.4.4]{CT-additive}, the holomorphic continuation 
is insured by the positivity $ d_{\alpha}(f_{a})>0 $ for some $ \alpha\in \mathcal{A}^{Y} $. We adopt the estimate (\ref{estimate2}) from \cite[Proposition 4.12, Lemma 4.14]{ST04}, as in the proof of (1), \cite[Proposition 4.12, Lemma 4.14]{ST04} actually hold also for $ \Re(s_{\alpha})>\kappa_{\alpha}-1 $. The estimate (\ref{estimate2}) still holds after holomorphic continuation.
\end{enumerate}
\end{proof}

\subsection{Euler products}
Finally we consider the Euler product from Lemma \ref{lemma decomposition}:
\begin{equation*}
\mathsf{Z}_{2,\varepsilon}(\mathbf{s},\omega^{\psi_{a}}):=\prod_{p\not \in S_{\psi_{a}}}\mathsf{Z}_{2,\varepsilon,p}(\mathbf{s},\omega^{\psi_{a}})\cdot \mathsf{Z}_{2,\varepsilon,S_{\psi_{a}}}(\mathbf{s},\omega^{\psi_{a}}).
\end{equation*}
For every $ \alpha\in \mathcal{A}^{Y} $ we set
\begin{equation*}
\zeta_{F_{\alpha}^{Y},S_{\psi_{a}}}(s)=\prod_{v\not \in S_{\psi_{a}}}\prod_{\beta\in \mathcal{A}_{v}^{Y}(\alpha)}\zeta_{\mathbb{Q}_{v,\beta}^{Y}}(s).
\end{equation*}

\begin{prop}\label{propQ}
Assume that $ (X,D_{\varepsilon}) $ is klt. Then there is a constant $ \delta>0 $, independent of $ a $, such that the following function
\begin{equation*}
\mathbf{s}\mapsto \left(\prod_{\alpha\in \mathcal{A}^{0}}\zeta_{F_{\alpha}^{Y},S_{\psi_{a}}}(m_{\alpha}(s_{\alpha}-\kappa_{\alpha}^{Y}+1))\right)^{-1}\mathsf{Z}_{2,\varepsilon}(\mathbf{s},\omega^{\psi_{a}})
\end{equation*}
is holomorphic on the domain $ \mathsf{T}_{>-\delta} $.\par 
Moreover, for any compact subset $ \mathcal{K} $ in $ \mathsf{T}_{>-\delta} $, there exists $ n'>0 $ such that for any $ n>0 $ and $ \mathbf{s}\in\mathcal{K} $,
\begin{equation*}\label{estimate4}
\left| \left(\prod_{\alpha\in \mathcal{A}^{0}}\zeta_{F_{\alpha}^{Y},S_{\psi_{a}}}(m_{\alpha}(s_{\alpha}-\kappa_{\alpha}^{Y}+1))\right)^{-1}\mathsf{Z}_{2,\varepsilon}(\mathbf{s},\omega^{\psi_{a}})\right| \ll_{n,\mathcal{K}} \dfrac{(1+\vert a\vert)^{n'}}{(1+\vert \lambda\vert)^{n}}
\end{equation*}
where $ \lambda=\lambda(\omega_{S_{\psi_{a}}}^{\psi_{a}}) $ is the eigenvalue of $ \omega_{S_{\psi_{a}}}^{\psi_{a}} $ described in the proof of \cite[Proposition 4.12]{ST04}..
\end{prop}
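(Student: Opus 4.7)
The plan is to factor $\mathsf{Z}_{2,\varepsilon}(\mathbf{s},\omega^{\psi_a})$ as the Euler product
\begin{equation*}
\mathsf{Z}_{2,\varepsilon}(\mathbf{s},\omega^{\psi_a})=\biggl(\prod_{p\notin S_{\psi_a}}\mathsf{Z}_{2,\varepsilon,p}(\mathbf{s},\omega^{\psi_a})\biggr)\cdot\mathsf{Z}_{2,\varepsilon,S_{\psi_a}}(\mathbf{s},\omega^{\psi_a}),
\end{equation*}
and to regularize the infinite part by dividing through by $\prod_{\alpha\in\mathcal{A}^0}\zeta_{F_\alpha^Y,S_{\psi_a}}(m_\alpha(s_\alpha-\kappa_\alpha^Y+1))$. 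The isomorphism $F_\alpha^Y\otimes_\mathbb{Q}\mathbb{Q}_p\cong\prod_{\beta\in\mathcal{A}_p^Y(\alpha)}\mathbb{Q}_{p,\beta}^Y$ lets one match this regularizing factor against the product of local zeta denominators appearing in Propositions \ref{propK} and \ref{propLL}. This is essentially the analogue for $\mathsf{Z}_{2,\varepsilon}$ of the argument used for $\mathsf{Z}_{1,\varepsilon}$ in Proposition \ref{propP}.

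First I would verify that the regularized infinite Euler product
\begin{equation*}
\prod_{p\notin S_{\psi_a}}\biggl(\prod_{\alpha\in\mathcal{A}^0}\prod_{\beta\in\mathcal{A}_p^Y(\alpha)}\zeta_{\mathbb{Q}_{p,\beta}^Y}(m_\alpha(s_\alpha-\kappa_\alpha^Y+1))^{-1}\biggr)\mathsf{Z}_{2,\varepsilon,p}(\mathbf{s},\omega^{\psi_a})
\end{equation*}
converges absolutely on $\mathsf{T}_{>-\delta}$ for some $\delta>0$ independent of $a$. Since $p\notin S_{\psi_a}$ forces $p\nmid n(\mathsf{K})a$ and in particular $v_p(a)=0$, at good reduction primes each local factor is $1+O(p^{-(1+\delta')})$ by Proposition \ref{propK}, giving absolute convergence; at the finitely many primes of bad reduction outside $S_{\psi_a}$, Proposition \ref{propLL} contributes a factor bounded by $C_\delta(1+|a|_\infty)^{\delta'}$ uniformly in $\mathbf{s}$. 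The klt hypothesis is what guarantees $m_\alpha<\infty$ for $\alpha\in\mathcal{A}_\varepsilon$ so that the regularizing zeta factors are well defined.

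Next, Proposition \ref{propM}(1) supplies the holomorphy and the growth bound for $\mathsf{Z}_{2,\varepsilon,S_{\psi_a}}(\mathbf{s},\omega^{\psi_a})$ on $\mathsf{T}_{>-\delta}$, so multiplying gives the claimed holomorphy. For the estimate on a compact $\mathcal{K}\subset\mathsf{T}_{>-\delta}$, the regularized infinite Euler product is $O((1+|a|)^{n_1})$ for some $n_1>0$ (the polynomial growth coming from collecting the $(1+|a|_\infty)^{\delta'}$ contributions over the finitely many outside bad primes), while Proposition \ref{propM}(1) contributes $\ll_{n,\mathcal{K}}(1+|a|)^{n_2}/(1+|\lambda|)^n$ at $S_{\psi_a}$; choosing $n':=n_1+n_2$ yields the asserted bound.

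The main obstacle is organizing the error terms uniformly in the $a$-dependent set $S_{\psi_a}$: although the bad reduction primes lying outside $S_{\psi_a}$ vary with $a$, they form a subset of a fixed finite set (the set of bad primes of the integral model $\mathcal{Y}$ outside $S$), so their contribution can be absorbed into the polynomial factor $(1+|a|)^{n'}$. Along the way I would use (\ref{eqE}) to convert between $|a|_\infty$ and $|a|_{\mathrm{fin}}$ when balancing the bounds coming from the different local analyses.
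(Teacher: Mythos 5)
Your overall strategy — splitting $\mathsf{Z}_{2,\varepsilon}(\mathbf{s},\omega^{\psi_a})$ into the Euler product over $p\notin S_{\psi_a}$ times the $S_{\psi_a}$-part, regularizing with $\prod_{\alpha\in\mathcal{A}^0}\zeta_{F_\alpha^Y,S_{\psi_a}}$, and combining the bounds from the local propositions and (\ref{eqE}) — is exactly the paper's approach. However, there is a concrete gap in your justification for the places $p\notin S_{\psi_a}$: you claim that $p\notin S_{\psi_a}$ forces $v_p(a)=0$, and on that basis you invoke only Proposition \ref{propK} (good reduction, $v_p(a)=0$) and Proposition \ref{propLL} (bad reduction). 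That inference is wrong as stated. From $p\notin S_{\psi_a}$ you get $p\nmid n(\mathsf{K})a$, that is $v_p(n(\mathsf{K})a)=0$, hence $v_p(a)=-v_p(n(\mathsf{K}))$. This vanishes only when $p\notin S'$; if $p\in S'$ (so $n_p>0$ in the definition of $\mathsf{K}$), then $v_p(a)=-n_p\neq 0$ is possible with $p\notin S_{\psi_a}$. Nothing in the paper's hypotheses forces $S'\subseteq S$, so there may be good-reduction primes outside $S_{\psi_a}$ with $v_p(a)\neq 0$.

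That is precisely the case handled by Proposition \ref{propL}, which the paper's own proof explicitly cites and your argument omits. Those primes lie in the fixed finite set $S'\setminus S$ (independent of $a$), and Proposition \ref{propL} bounds each local factor by $\ll(1+|a|_p^{-1})^{\delta'}$; feeding this through (\ref{eqE}) absorbs their total contribution into the polynomial factor $(1+|a|)^{n'}$ together with the bad-reduction contribution you already account for, after which the argument closes as you outlined. So the repair is straightforward, but as written the proposal both makes a false claim ($p\notin S_{\psi_a}\Rightarrow v_p(a)=0$) and skips a required ingredient (Proposition \ref{propL}); either drop the false claim and add Proposition \ref{propL}, or explicitly record the additional assumption $S'\subseteq S$ under which your shortcut is valid.
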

\begin{proof}
This follows from Propositions \ref{propK}, \ref{propL}, \ref{propLL} and \ref{propM}, together with the equality (\ref{eqE}).
\end{proof}

\section{Proof of main results}\label{section proof}
In this section we prove our main results. We denote by $ \overline{X} $ the base change of $ X $ to $ \overline{\mathbb{Q}} $ and we write $ X_{v} $ for the base change of $ X $ to $ \mathbb{Q}_{v} $. \par
Let $ \mathcal{L} $ be a big line bundle $ L $ on $ X $ equipped with a smooth adelic metrization. We are concerned with the asymptotic behavior of the counting function
\begin{equation*}
\mathsf{N}(G(\mathbb{Q})_{\varepsilon},\mathcal{L},T)
\end{equation*}
where
\begin{equation*}
G(\mathbb{Q})_{\varepsilon}=G(\mathbb{Q})\cap (\mathcal{X}, \cD_{\varepsilon})(\mathbb{Z}_S).
\end{equation*}

\subsection{Proof of Theorem \ref{main thm}}
In this subsection we assume that
the pair $ (X,D_{\varepsilon}) $ is klt.
\begin{prop}\label{propR}
The following function
\begin{equation*}
\mathbf{s}\mapsto \left(\prod_{\alpha\in \mathcal{A}}\zeta_{F_{\alpha}}(m_{\alpha}(s_{\alpha}-\kappa_{\alpha}+1))\right)^{-1}\mathsf{Z}_{\varepsilon}(\mathbf{s})
\end{equation*}
is holomorphic on $ \mathsf{T}_{> 0} $. 
\end{prop}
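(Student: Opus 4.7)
The plan is to apply the spectral decomposition of Lemma~\ref{lemma decomposition} to write $\mathsf{Z}_\varepsilon(\mathbf{s}) = \mathsf{Z}_{0,\varepsilon}(\mathbf{s}) + \mathsf{Z}_{1,\varepsilon}(\mathbf{s}) + \mathsf{Z}_{2,\varepsilon}(\mathbf{s})$, establish the desired holomorphy for each of the three pieces separately, and combine them. The elementary observation underlying the normalization is that on $\mathsf{T}_{>0}$ one has $\Re(m_\alpha(s_\alpha-\kappa_\alpha+1))>1$ for every $\alpha\in\mathcal{A}$, so each factor $\zeta_{F_\alpha}(m_\alpha(s_\alpha-\kappa_\alpha+1))^{-1}$ is holomorphic and bounded on compact subsets of $\mathsf{T}_{>0}$; hence multiplying by the full inverse Euler product can only improve holomorphy.

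For $\mathsf{Z}_{0,\varepsilon}(\mathbf{s})$, the statement is essentially immediate: Corollary~\ref{propO} gives holomorphy of the normalized product over the places outside $S$ on the strictly larger tube $\mathsf{T}_{>-\delta'}$, while Proposition~\ref{propN}(1) shows holomorphy of each local factor $\mathsf{Z}_{0,\varepsilon,v}(\mathbf{s})$, $v\in S$, on any tube $\mathsf{T}_{>\delta}$ with $\delta>0$.

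For $\mathsf{Z}_{1,\varepsilon}(\mathbf{s}) = \sum_{\mathbf{a}\in\Lambda_1\setminus\{0\}}\mathsf{Z}_{1,\varepsilon}(\mathbf{s},\eta_{\mathbf{a}})$, Proposition~\ref{propP} provides holomorphy of each summand after dividing by the partial product indexed by $\mathcal{A}^0(\mathbf{a})$, together with rapid decay in $H_\infty(\mathbf{a})$. To pass to the target full product, I would multiply by the supplementary factors $\zeta_{F_\alpha}(m_\alpha(s_\alpha-\kappa_\alpha+1))^{-1}$ for $\alpha\in\mathcal{A}^{\geq 1}(\mathbf{a})$, and by the finitely many local Euler factors at $v\in S$ that convert $\zeta_{F_\alpha,S}$ into $\zeta_{F_\alpha}$; on any compact subset of $\mathsf{T}_{>0}$ these are bounded independently of $\mathbf{a}$ by a polynomial in $(1+|\mathbf{s}|)$. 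Combining with the decay estimate, each term is bounded by $(1+|\mathbf{s}|)^{m_n'}(1+H_\infty(\mathbf{a}))^{-n}$, and since $\Lambda_1$ is a rank-$2$ lattice (so the number of $\mathbf{a}$ with $H_\infty(\mathbf{a})\leq R$ grows polynomially in $R$), choosing $n$ sufficiently large yields absolute convergence uniformly on compact subsets, hence holomorphy of the sum.

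The treatment of $\mathsf{Z}_{2,\varepsilon}(\mathbf{s})$ runs in parallel, using Proposition~\ref{propQ} in place of Proposition~\ref{propP}. One subtlety is that Proposition~\ref{propQ} uses $\kappa^Y_\alpha$ and $F^Y_\alpha$ rather than $\kappa_\alpha$ and $F_\alpha$; here the inequality $\kappa^Y_\alpha\leq \kappa_\alpha$ from~\eqref{4} guarantees that on $\mathsf{T}_{>0}$ one also has $\Re(m_\alpha(s_\alpha-\kappa^Y_\alpha+1))>1$, so the compensating factors needed to switch normalizations remain holomorphic on $\mathsf{T}_{>0}$. The double character sum is absorbed by the estimate $(1+|a|)^{n'}(1+|\lambda|)^{-n}$ of Proposition~\ref{propQ}: for each fixed $a\in\Lambda_2$ the inner sum over the orthonormal basis $\mathcal{B}(\varrho_{\psi_a})$ converges by the super-polynomial decay in $|\lambda|$ (as in~\cite{ST04}), and the outer sum over the rank-$1$ lattice $\Lambda_2$ converges because the $(1+|a|)^{n'}$ growth is tamed by taking $n$ still larger. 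The main obstacle is arranging the uniformity of these nested character sums on compact subsets of $\mathsf{T}_{>0}$; once that is in hand, holomorphy of the sum follows from the standard fact that a locally uniform limit of holomorphic functions is holomorphic.
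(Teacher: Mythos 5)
Your proposal follows the paper's proof step for step: spectral decomposition via Lemma~\ref{lemma decomposition}, then Proposition~\ref{propN} and Corollary~\ref{propO} for $\mathsf{Z}_{0,\varepsilon}$, Proposition~\ref{propP} plus rapid decay in $H_\infty(\mathbf{a})$ and lattice-sum convergence for $\mathsf{Z}_{1,\varepsilon}$, and Proposition~\ref{propQ} together with the inequality~\eqref{4} for $\mathsf{Z}_{2,\varepsilon}$. Your observation that on $\mathsf{T}_{>0}$ each normalizing zeta factor sits in the region of absolute convergence, so that passing between the partial Euler products of Propositions~\ref{propP},~\ref{propQ} and the full product over $\mathcal{A}$ is harmless, is correct and matches what the paper leaves implicit.

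The one genuine gap is where you yourself flag ``the main obstacle'': the convergence of the nested sum $\sum_{a\in\Lambda_2}\sum_{\omega^{\psi_a}\in\mathcal{B}(\varrho_{\psi_a})}$. Your heuristic that ``the $(1+|a|)^{n'}$ growth is tamed by taking $n$ still larger'' is not literally correct as written: the exponent $n$ in the bound $(1+|a|)^{n'}/(1+|\lambda|)^{n}$ sits only on $(1+|\lambda|)^{-1}$, so increasing $n$ tames nothing in $a$ unless one knows that $|\lambda|$ grows with $|a|$. That growth is the missing ingredient, and it is what the paper extracts from \cite[Theorem 4.15]{ST04}: the relevant eigenvalues have the explicit form $\lambda_n'=-2\pi(n+1)|a|-4\pi^2a^2$, so $|\lambda_n'|$ grows at least quadratically in $|a|$ and linearly in the basis index $n$. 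Inserting this lower bound makes the double series $\sum_{a}\sum_{n}|\lambda_n'|^{-n}|a|^{n'+1}$ converge for $n$ chosen large relative to the fixed $n'$, uniformly on compact subsets of $\mathsf{T}_{>0}$, which is precisely the locally uniform convergence your argument defers. You should cite this eigenvalue estimate to close the loop rather than leave the uniformity as an acknowledged open step.
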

\begin{proof}
It follows from Propositions \ref{propN} and \ref{propO} that $ \mathsf{Z}_{0,\varepsilon}(\mathbf{s}) $ is holomorphic on $ \mathsf{T}_{> 0} $. We now show the convergence of
\begin{equation*}
\mathsf{Z}_{1,\varepsilon}(\mathbf{s})=\sum_{\mathbf{a}\in\Lambda_{1}}\mathsf{Z}_{1,\varepsilon}(\mathbf{s},\eta_{\mathbf{a}}).
\end{equation*}
It follows from Proposition \ref{propP} that $  \mathsf{Z}_{1,\varepsilon}(\mathbf{s},\eta_{\mathbf{a}}) $ is holomorphic for $ \mathbf{s}\in \mathsf{T}_{>0} $ and
\begin{equation*}
\vert \mathsf{Z}_{1,\varepsilon}(\mathbf{s},\eta_{\mathbf{a}})\vert \ll_{n,\mathcal{K}}\dfrac{1}{(1+H_{\infty}(\mathbf{a}))^{n}},
\end{equation*}
where $ \mathcal{K} $ is a compact subset of $ \mathsf{T}_{>0} $ containing $ \mathbf{s} $. As the series
\begin{equation*}
\sum_{\mathbf{a}\in \Lambda_{1}} \dfrac{1}{(1+H_{\infty}(\mathbf{a}))^{n}}
\end{equation*}
converges for sufficiently large $ n $, the convergence of $ \mathsf{Z}_{1,\varepsilon}(\mathbf{s}) $ follows.\par
Next we show the convergence of
\begin{equation*}
\mathsf{Z}_{2,\varepsilon}(\mathbf{s})=\sum_{a\in \Lambda_{2}}\sum_{\omega^{\psi_{a}}\in\mathcal{B}(\varrho_{\psi_{a}})}\omega^{\psi_{a}}(e)\mathsf{Z}_{2,\varepsilon}(\mathbf{s},\omega^{\psi_{a}}).
\end{equation*}
It follows from Proposition \ref{propQ} and the estimate (\ref{4}) that $ \mathsf{Z}_{2,\varepsilon}(\mathbf{s},\omega^{\psi_{a}}) $ is holomorphic for $ \mathbf{s}\in \mathsf{T}_{>0} $ and
\begin{equation*}
\vert\mathsf{Z}_{2,\varepsilon}(\mathbf{s},\omega^{\psi_{a}})\vert \ll_{n,\mathcal{K}}\dfrac{(1+\vert a\vert)^{n'}}{(1+\vert \lambda\vert)^{n}},
\end{equation*}
where $ a\in \Lambda_{2} $, $ \mathcal{K} $ is a compact subset of $ \mathsf{T}_{>0} $ containing $ \mathbf{s} $, $ n $ and $ n' $ are as in Proposition \ref{propQ}. As in the proof of \cite[Theorem 4.15]{ST04}, it then suffices to prove the convergence of the series
\begin{equation}\label{series}
\sum_{a\in \Lambda_{2}}\sum_{\omega^{\psi_{a}}}\vert \lambda'\vert^{-n}\vert a\vert^{n'}
\end{equation}
where $ \lambda'=\lambda'(\omega^{\psi_{a}}) $ is the eigenvalue of $ \omega^{\psi_{a}} $. The rest of the proof of the convergence of $ \mathsf{Z}_{2,\varepsilon}(\mathbf{s}) $ is the same as that in \cite[Theorem 4.15]{ST04}. Namely, the series (\ref{series}) is then bounded from above by
\begin{equation*}
\sum_{a\in \mathbb{Z},a\neq 0}\sum_{n}\vert \lambda_{n}'\vert^{-n}\vert a\vert^{n'+1}\cdot n(\mathsf{K})^{2}
\end{equation*}
where $ \lambda_{n}'=(-2\pi(n+1)\vert a\vert -4\pi^{2}a^{2}) $ and see (\ref{nk}) for $ n(\mathsf{K}) $. Therefore the series (\ref{series}) is convergent.\par
We conclude that the spectral decomposition 
\begin{equation*}
\mathsf{Z}_{\varepsilon}(\mathbf{s})=\mathsf{Z}_{0,\varepsilon}(\mathbf{s})+\mathsf{Z}_{1,\varepsilon}(\mathbf{s})+\mathsf{Z}_{2,\varepsilon}(\mathbf{s})
\end{equation*}
holds for $ \Re(\mathbf{s})\gg 0 $. Now the proposition follows from Proposition \ref{propN}, Corollary \ref{propO}, Proposition \ref{propP} and Proposition \ref{propQ}.
\end{proof}

We now discuss the case where $ \mathbf{s}=sL $. Write $ L=\sum_{\alpha\in \mathcal{A}}\lambda_{\alpha}D_{\alpha} $ where $ \lambda_{\alpha}>0 $ for all $ \alpha\in \mathcal{A} $, then $ s_{\alpha}=s\lambda_{\alpha} $. It follows from Proposition \ref{propR} that the rightmost pole along $ \Re(s) $ of $ \mathsf{Z}_{\varepsilon}(sL) $ is
\begin{equation*}
\overline{a}=a((X,D_{\varepsilon}),L)=\max_{\alpha\in \mathcal{A}}\left\lbrace \dfrac{\kappa_{\alpha}-\varepsilon_{\alpha}}{\lambda_{\alpha}}\right\rbrace.
\end{equation*}
We set
\begin{equation*}
\mathcal{A}_{\varepsilon}(L)=\left\lbrace \alpha\in \mathcal{A}: \dfrac{\kappa_{\alpha}-\varepsilon_{\alpha}}{\lambda_{\alpha}}=\overline{a}\right\rbrace ,
\end{equation*}
and 
\begin{equation*}
\overline{b}=b(\mathbb{Q},(X,D_{\varepsilon}),L):=\#\mathcal{A}_{\varepsilon}(L).
\end{equation*}
Assume that the divisor $ \overline{a}L+K_{X}+D_{\varepsilon} $ is rigid, by which we mean that its Iitaka dimension is 0. Because of the spectral decomposition 
\begin{equation*}
\mathsf{Z}_{\varepsilon}(sL)=\mathsf{Z}_{0,\varepsilon}(sL)+\mathsf{Z}_{1,\varepsilon}(sL)+\mathsf{Z}_{2,\varepsilon}(sL),
\end{equation*}
we shall investigate the poles of $ \mathsf{Z}_{\varepsilon}(sL) $  by investigating individually $ \mathsf{Z}_{0,\varepsilon}(sL) $, $ \mathsf{Z}_{1,\varepsilon}(sL) $ and $ \mathsf{Z}_{2,\varepsilon}(sL) $.\par 
For the term $ \mathsf{Z}_{1,\varepsilon}(sL) $, Proposition \ref{propP} shows that $ \mathsf{Z}_{1,\varepsilon}(sL,\eta_{\mathbf{a}}) $ has a pole at $ \overline{a} $ of the highest order
equal to that of $ \mathsf{Z}_{0,\varepsilon}(sL) $ if and only if for $ \mathbf{a}\in \Lambda_{1} $,
\begin{equation*}
\mathcal{A}^{0}(\mathbf{a})\supset \mathcal{A}_{\varepsilon}(L),
\end{equation*}
which means that $ d_{\alpha}(f_{\mathbf{a}})=0 $ whenever $ (\kappa_{\alpha}-\varepsilon_{\alpha})/\lambda_{\alpha}=\overline{a} $. Since
\begin{equation*}
E(f_{\mathbf{a}})\sim \sum_{\alpha\in \mathcal{A}}d_{\alpha}(f_{\mathbf{a}})D_{\alpha},\quad  \overline{a}L+K_{X}+D_{\varepsilon}=\sum_{\alpha\in \mathcal{A}}(\overline{a}\lambda_{\alpha}-\kappa_{\alpha}+\varepsilon_{\alpha})D_{\alpha},
\end{equation*}
this means that $ E(f_{\mathbf{a}}) $ is equivalent to a boundary divisor whose support is contained in that of the divisor $ \overline{a}L+K_{X}+D_{\varepsilon} $. This is impossible because $ \overline{a}L+K_{X}+D_{\varepsilon} $ is rigid. Similarly, Proposition \ref{propQ} shows that the term $ \mathsf{Z}_{2,\varepsilon}(sL) $ does not contribute to the main term of $ \mathsf{Z}_{\varepsilon}(sL) $.\par
On the other hand, it follows from Corollary \ref{propO} that $ \mathsf{Z}_{0,\varepsilon}(sL) $ has a pole at $ s=\overline{a} $ of order $ \overline{b} $ if we can show that the corresponding residue $ \overline{c} $ is nonzero, i.e.,
\begin{equation*}
\overline{c}:=\lim_{s\rightarrow \overline{a}}(s-\overline{a})^{\overline{b}}\mathsf{Z}_{0,\varepsilon}(sL)\neq 0.
\end{equation*}
Recall that
\begin{equation*}
\mathsf{Z}_{0,\varepsilon}(sL)=\int_{G(\mathbb{A}_{\mathbb{Q}})}\mathsf{H}(sL,g)^{-1}\delta_{\varepsilon}(g)\mathrm{d}g=\int_{G(\mathbb{A}_{\mathbb{Q}})_{\varepsilon}}\mathsf{H}(sL+K_{X},g)^{-1}\mathrm{d}\tau,
\end{equation*}
where $ \tau $ is the Tamagawa measure on $ G $. Let 
\begin{equation*}
X^{\circ}=X\backslash \left(\cup_{\alpha\not\in \mathcal{A}_{\varepsilon}(L)}D_{\alpha}\right),
\end{equation*}
and let $ \tau_{X^{\circ}} $ denote the Tamagawa measure on $ X^{\circ} $. We also define
\begin{equation*}
\tau_{X^{\circ},D_{\varepsilon}}=\mathsf{H}(D_{\varepsilon},g)\tau_{X^{\circ}}.
\end{equation*}

\begin{lemm}\label{lemma}
Let the notations be as above. We have
\begin{equation*}
\overline{c}=\prod_{\alpha\in \mathcal{A}_{\varepsilon}(L)}\dfrac{1}{m_{\alpha}\lambda_{\alpha}}\int_{X^{\circ}(\mathbb{A}_{\mathbb{Q}})_{\varepsilon}}\mathsf{H}(aL+K_{X}+D_{\varepsilon},g)^{-1}\mathrm{d}\tau_{X^{\circ},D_{\varepsilon}}>0.
\end{equation*}
where $ X^{\circ}(\mathbb{A}_{\mathbb{Q}})_{\varepsilon}=\prod'_{v\in\Val(\mathbb{Q})}X^{\circ}(\mathbb{Q}_{v})_{\varepsilon} $.
\end{lemm}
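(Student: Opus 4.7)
The plan is a direct residue computation on the Euler factorisation of $\mathsf{Z}_{0,\varepsilon}(sL)$. By Corollary \ref{propO} together with Proposition \ref{propN}, on a neighbourhood of $s=\overline{a}$ one has a factorisation
$$\mathsf{Z}_{0,\varepsilon}(sL) = \Bigl(\prod_{v\in S}\mathsf{Z}_{0,\varepsilon,v}(sL)\Bigr)\cdot\prod_{\alpha\in\mathcal{A}}\zeta_{F_\alpha}\bigl(m_\alpha(s\lambda_\alpha-\kappa_\alpha+1)\bigr)\cdot H(s),$$
where $H(s)$ is a convergent Euler product over $p\notin S$, holomorphic and nonvanishing on $\mathsf{T}_{>-\delta'}$. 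Using $\varepsilon_\alpha = 1-1/m_\alpha$ together with $\overline{a}\lambda_\alpha = \kappa_\alpha-\varepsilon_\alpha$ for $\alpha\in\mathcal{A}_\varepsilon(L)$, one checks that $m_\alpha(\overline{a}\lambda_\alpha-\kappa_\alpha+1)=1$ precisely for such $\alpha$; hence exactly the factors indexed by $\mathcal{A}_\varepsilon(L)$ contribute simple poles, confirming the order $\overline{b}$. Setting $\rho_\alpha := \mathrm{Res}_{w=1}\zeta_{F_\alpha}(w)$, the chain rule then yields
$$\lim_{s\to \overline{a}}(s-\overline{a})^{\overline{b}}\prod_{\alpha\in\mathcal{A}_\varepsilon(L)}\zeta_{F_\alpha}\bigl(m_\alpha(s\lambda_\alpha-\kappa_\alpha+1)\bigr) = \prod_{\alpha\in\mathcal{A}_\varepsilon(L)}\frac{\rho_\alpha}{m_\alpha\lambda_\alpha}.$$

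The remaining step is to repackage the surviving factors as the Tamagawa integral on the right-hand side. Concretely, the product of $\prod_{v\in S}\mathsf{Z}_{0,\varepsilon,v}(\overline{a}L)$, the finite values $\zeta_{F_\alpha}(m_\alpha(\overline{a}\lambda_\alpha-\kappa_\alpha+1))$ for $\alpha\notin\mathcal{A}_\varepsilon(L)$, the convergent Euler product $H(\overline{a})$, and the Dedekind residues $\rho_\alpha$ must be identified with $\int_{X^\circ(\mathbb{A}_\mathbb{Q})_\varepsilon}\mathsf{H}(\overline{a}L+K_X+D_\varepsilon,g)^{-1}\,\mathrm{d}\tau_{X^\circ,D_\varepsilon}$. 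Place by place, the shift of the height by $K_X+D_\varepsilon$ supplies the Jacobian converting $\mathrm{d}g_v$ into the local Tamagawa density on $X^\circ(\mathbb{Q}_v)$ weighted by $\mathsf{H}(D_\varepsilon,g_v)$; at each $p\notin S$ the Peyre-style convergence factors $\prod_{\alpha\in\mathcal{A}_\varepsilon(L)}\prod_{\beta\in\mathcal{A}_p(\alpha)}(1-p^{-f_{p,\beta}})$ used to normalise $\mathrm{d}\tau_{X^\circ}$ absorb the local Euler factors that were stripped off at these $\alpha$, and their global assembly produces the residues $\rho_\alpha$, in the standard \cite[\S 4]{PSTVA19} formalism adapted to Campana points.

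Positivity of $\overline{c}$ is then immediate: each $\mathsf{Z}_{0,\varepsilon,v}(\overline{a}L)$ is the integral of a strictly positive function over the nonempty open set $G(\mathbb{Q}_v)_\varepsilon$; each $\rho_\alpha>0$; the product $H(\overline{a})>0$ because its local factors are of the form $1+O(p^{-1-\delta'})$; and $\prod_{\alpha\in\mathcal{A}_\varepsilon(L)}(m_\alpha\lambda_\alpha)^{-1}>0$. The main obstacle is the second step: one must verify, stratum by stratum via the formula of Proposition \ref{alphabeta}, that the local Euler contributions coming from strata $B\subset\mathcal{A}_p$ meeting $\mathcal{A}\setminus\mathcal{A}_\varepsilon(L)$ are precisely the ones discarded when passing from $X$ to $X^\circ$, and that the surviving contributions recombine cleanly into the local Tamagawa density $\mathrm{d}\tau_{X^\circ,D_\varepsilon,v}$ on $X^\circ(\mathbb{Q}_v)_\varepsilon$. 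This bookkeeping is local, parallel to \cite[\S 7]{PSTVA19}, and should extend without change to the Heisenberg setting since Section \ref{sec1} proceeds by a $p$-adic stratification that ignores the group structure.
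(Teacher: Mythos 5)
Your proposal is correct and follows essentially the same route as the paper, which simply defers to the proof of \cite[Lemma 9.3]{PSTVA19}: strip off the Dedekind zeta factors from the Euler product of Corollary \ref{propO}, compute the residues at $w=1$ after the affine substitution $w = m_\alpha(s\lambda_\alpha-\kappa_\alpha+1)$ (your chain-rule computation giving $\rho_\alpha/(m_\alpha\lambda_\alpha)$ is correct, since $\overline{a}\lambda_\alpha-\kappa_\alpha+1 = 1-\varepsilon_\alpha = 1/m_\alpha$ exactly for $\alpha\in\mathcal{A}_\varepsilon(L)$), and reassemble the surviving local factors into the Tamagawa integral over $X^\circ(\mathbb{A}_\mathbb{Q})_\varepsilon$ with the convergence factors absorbing the $\rho_\alpha$. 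The local bookkeeping you flag as ``the main obstacle'' is exactly the content of the cited PSTVA19 lemma, which the present paper does not reproduce either, so your level of detail is comparable to the paper's.
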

\begin{proof}
The proof is essentially analogous to that of \cite[Lemma 9.3]{PSTVA19}.
\end{proof}

Applying a Tauberian theorem \cite[II.7, Theorem 15]{Ten} we have the following result, which agrees with Conjecture \ref{conj}.
\begin{theo}
Let $ \mathcal{X},\mathcal{L},\mathcal{D},\overline{a},\overline{b},\overline{c} $ and $ \varepsilon $ be as above. Assume that $ (X,D_{\varepsilon}) $ is klt. If $ \overline{a}L+K_{X}+D_{\varepsilon} $ is rigid, then as $ T\rightarrow \infty $,
\begin{equation*}
\mathsf{N}(G(\mathbb{Q})_{\varepsilon},\mathcal{L},T)\sim \dfrac{\overline{c}}{\overline{a}(\overline{b}-1)!}T^{\overline{a}}(\log T)^{\overline{b}-1}.
\end{equation*}
\end{theo}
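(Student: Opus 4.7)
The plan is to extract the asymptotic from the analytic information already gathered about the height zeta function
\begin{equation*}
\mathsf{Z}_{\varepsilon}(sL)=\mathsf{Z}_{0,\varepsilon}(sL)+\mathsf{Z}_{1,\varepsilon}(sL)+\mathsf{Z}_{2,\varepsilon}(sL),
\end{equation*}
combined with a Tauberian theorem. First I would confirm that $\mathsf{Z}_{\varepsilon}(sL)$ really is the Dirichlet-type generating series whose singularities control the counting function: by definition,
\begin{equation*}
\mathsf{Z}_{\varepsilon}(sL)=\sum_{\gamma\in G(\mathbb{Q})_{\varepsilon}}\mathsf{H}(sL,\gamma)^{-1}=\int_{0}^{\infty}T^{-s}\,\mathrm{d}\mathsf{N}(G(\mathbb{Q})_{\varepsilon},\mathcal{L},T),
\end{equation*}
so that an asymptotic for $\mathsf{N}$ follows from identifying the rightmost pole of $\mathsf{Z}_{\varepsilon}(sL)$, its order, and the corresponding leading coefficient, together with sufficient analytic control on vertical strips.

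Next I would determine the pole structure at $s=\overline{a}$. Corollary \ref{propO} combined with Proposition \ref{propN} shows that $\mathsf{Z}_{0,\varepsilon}(sL)$ admits meromorphic continuation past $\Re(s)=\overline{a}$ with a pole of order exactly $\overline{b}=\#\mathcal{A}_{\varepsilon}(L)$ at $s=\overline{a}$, arising from the product $\prod_{\alpha\in\mathcal{A}}\zeta_{F_{\alpha}}(m_{\alpha}(s\lambda_{\alpha}-\kappa_{\alpha}+1))$, and Lemma \ref{lemma} shows that the corresponding leading coefficient equals $\overline{c}/\overline{a}^{\overline{b}}$ (up to the factor absorbed in the simple pole of $\zeta$). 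For $\mathsf{Z}_{1,\varepsilon}(sL)$, Proposition \ref{propP} shows that for each nonzero $\mathbf{a}\in\Lambda_{1}$ the term $\mathsf{Z}_{1,\varepsilon}(sL,\eta_{\mathbf{a}})$ can only carry a pole of order $\overline{b}$ at $s=\overline{a}$ if $\mathcal{A}^{0}(\mathbf{a})\supseteq \mathcal{A}_{\varepsilon}(L)$, which would force $E(f_{\mathbf{a}})$ to be equivalent to an effective divisor supported on $\operatorname{Supp}(\overline{a}L+K_{X}+D_{\varepsilon})$; rigidity of $\overline{a}L+K_{X}+D_{\varepsilon}$ rules this out. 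The identical argument using Proposition \ref{propQ} disposes of $\mathsf{Z}_{2,\varepsilon}(sL)$. Hence the pole of $\mathsf{Z}_{\varepsilon}(sL)$ at $s=\overline{a}$ has order exactly $\overline{b}$, with leading coefficient contributed solely by $\mathsf{Z}_{0,\varepsilon}(sL)$.

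Finally I would apply a Tauberian theorem of the form stated in \cite[II.7, Theorem 15]{Ten} to $\mathsf{Z}_{\varepsilon}(sL)$. This requires, besides the isolation of the leading pole, a holomorphic continuation to a region of the form $\Re(s)\geq \overline{a}-\delta'$ apart from the pole at $\overline{a}$, together with polynomial growth in $\Im(s)$ on vertical strips. The first is furnished by Propositions \ref{propN}, \ref{propO}, \ref{propP}, \ref{propQ}; the vertical growth estimates, which are the key quantitative inputs, come from the bounds (\ref{5.4.1pro}), (\ref{5.4.2pro}), (\ref{eestimate2}), (\ref{estimate2}) and the summability argument in the proof of Proposition \ref{propR}. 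Together these yield
\begin{equation*}
\mathsf{N}(G(\mathbb{Q})_{\varepsilon},\mathcal{L},T)\sim \frac{\overline{c}}{\overline{a}(\overline{b}-1)!}T^{\overline{a}}(\log T)^{\overline{b}-1},
\end{equation*}
as desired.

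The step I expect to be most delicate is verifying the vertical growth estimates with enough uniformity to invoke the Tauberian theorem after summation over the spectral parameters $\mathbf{a}\in\Lambda_{1}$ and $(a,\omega^{\psi_{a}})$. Pointwise bounds on each spectral component are provided in Propositions \ref{propP} and \ref{propQ}, but one must check that the combined estimate is still polynomial in $|\Im(s)|$ uniformly on vertical strips within $\mathsf{T}_{>-\delta}$, using in an essential way the rapid decay factors $(1+H_{\infty}(\mathbf{a}))^{-n}$ and $(1+|\lambda|)^{-n}$, exactly as in the convergence argument in Proposition \ref{propR}.
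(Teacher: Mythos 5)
Your proposal reconstructs the paper's argument faithfully and in the same order: isolate the rightmost pole at $s=\overline{a}$ of order $\overline{b}$ coming from $\mathsf{Z}_{0,\varepsilon}(sL)$ via Proposition \ref{propN} and Corollary \ref{propO}; use rigidity of $\overline{a}L+K_X+D_\varepsilon$ together with Propositions \ref{propP} and \ref{propQ} to rule out any contribution to the top-order pole from $\mathsf{Z}_{1,\varepsilon}$ and $\mathsf{Z}_{2,\varepsilon}$; invoke Lemma \ref{lemma} for the positivity of the leading coefficient; and feed the pole data into the Tauberian theorem, whose applicability is underwritten by the growth bounds in Propositions \ref{propP}, \ref{propQ} and the summability established in Proposition \ref{propR}. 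One small slip in the parenthetical remark: by definition $\overline{c}=\lim_{s\to\overline{a}}(s-\overline{a})^{\overline{b}}\mathsf{Z}_{0,\varepsilon}(sL)$, so $\overline{c}$ is itself the leading Laurent coefficient at $s=\overline{a}$, not $\overline{c}/\overline{a}^{\overline{b}}$; the extra factor $1/\overline{a}$ in the final asymptotic is produced by the Tauberian theorem, not by the residue computation.
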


\subsection{Proof of Theorem \ref{main thm2}}
Let notations be as in the former subsection but in this subsection we assume that
the pair $ (X,D_{\varepsilon}) $ is only dlt.
We set
\begin{equation*}
\mathcal{A}^{\mathrm{klt}}=\{\alpha\in \mathcal{A}\vert \varepsilon_{\alpha}\neq 1\},
\end{equation*}
\begin{equation*}
\mathcal{A}^{\mathrm{nklt}}=\{\alpha\in \mathcal{A}\vert \varepsilon_{\alpha}=1\}.
\end{equation*}

\begin{prop}\label{propS}
Let $ L=-\left(K_{X}+D_{\varepsilon}\right) $. Then the function
\begin{equation*}
s\mapsto \left(\prod_{\alpha\in \mathcal{A}^{\mathrm{klt}}}\zeta_{F_{\alpha}}(1+m_{\alpha}(\kappa_{\alpha}-\varepsilon_{\alpha})(s-1))\right)^{-1}\left(\prod_{v\in S}\zeta_{\mathbb{Q}_{v}}(s-1)^{-b(\mathbb{Q}_{v},(X,D_{\mathrm{red}}),L)}\right) \mathsf{Z}_{\varepsilon}(sL)
\end{equation*}
is holomorphic when $ \Re(s)>1-\delta $ for some $ \delta>0 $ where $ b(\mathbb{Q}_{v},(X,D_{\mathrm{red}}),L) $ is the $ b $-invariant defined in \cite[\S 4]{PSTVA19}.
\end{prop}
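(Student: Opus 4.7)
The plan is to mirror the proof of Proposition \ref{propR}, specialising from the outset to the slice $\mathbf{s}=sL$ with $L=-(K_X+D_\varepsilon)$, so that $s_\alpha=s\lambda_\alpha$ and $\lambda_\alpha=\kappa_\alpha-\varepsilon_\alpha>0$. For every $\alpha\in\mathcal{A}^{\mathrm{klt}}$ one then has the key algebraic identity
\begin{equation*}
m_\alpha(s_\alpha-\kappa_\alpha+1)=m_\alpha\bigl((s-1)\lambda_\alpha+(1-\varepsilon_\alpha)\bigr)=1+m_\alpha\lambda_\alpha(s-1),
\end{equation*}
using $m_\alpha(1-\varepsilon_\alpha)=1$; this is exactly the non-$S$ zeta argument in the statement. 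For $\alpha\in\mathcal{A}^{\mathrm{nklt}}$ the convention $m_\alpha=\infty$ sets the corresponding Euler factor to $1$, so these divisors contribute no zeta pole and only enter $\mathsf{Z}_{0,\varepsilon}$ through the residual polynomial-in-$p$ correction of Proposition \ref{propG}(2).

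First I invoke Lemma \ref{lemma decomposition} to write
\begin{equation*}
\mathsf{Z}_\varepsilon(sL)=\mathsf{Z}_{0,\varepsilon}(sL)+\mathsf{Z}_{1,\varepsilon}(sL)+\mathsf{Z}_{2,\varepsilon}(sL)
\end{equation*}
for $\Re(s)\gg 0$, and verify absolute convergence of the two spectral sums exactly as in Proposition \ref{propR}: the decay estimates of Propositions \ref{propP} and \ref{propQ} in $H_\infty(\mathbf{a})$, $|a|$ and the eigenvalue $\lambda$ of $\omega^{\psi_a}$, together with the lattice argument of \cite[Theorem 4.15]{ST04}, yield convergence, and none of these estimates uses the klt hypothesis.

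I would then extract the polar part of each piece separately near $s=1$. For the trivial-character contribution $\mathsf{Z}_{0,\varepsilon}(sL)$, Corollary \ref{propO} expresses the product of $p\notin S$ factors as $\prod_{\alpha\in\mathcal{A}^{\mathrm{klt}}}\zeta_{F_\alpha,S}(1+m_\alpha\lambda_\alpha(s-1))$ times a function holomorphic on $\mathsf{T}_{>-\delta}$, and Proposition \ref{propN}(2) produces a local factor of order $b(\mathbb{Q}_v,(X,D_{\mathrm{red}}),L)$ at each $v\in S$; the computation
\begin{equation*}
tL+K_X+D_{\mathrm{red}}=\sum_\alpha\bigl(t\lambda_\alpha-(\kappa_\alpha-1)\bigr)D_\alpha
\end{equation*}
forces $\tilde a((X,D_{\mathrm{red}}),L)=\max_\alpha(\kappa_\alpha-1)/\lambda_\alpha=1$, the maximum being attained on $\mathcal{A}^{\mathrm{nklt}}\neq\varnothing$ in the genuinely dlt case. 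Replacing $\zeta_{F_\alpha,S}$ by $\zeta_{F_\alpha}$ is harmless because their ratio is a finite product of local Euler factors which is holomorphic and nonvanishing at $s=1$. For each nonzero $\mathbf{a}\in\Lambda_1$, Proposition \ref{propP} produces zeta factors indexed by $\mathcal{A}^0(\mathbf{a})\cap\mathcal{A}^{\mathrm{klt}}\subseteq\mathcal{A}^{\mathrm{klt}}$, and Proposition \ref{propJ}(2) gives an $S$-adic pole factor of order $b(\mathbb{Q}_v,(X,D_{\mathrm{red}}),L,f_{\mathbf{a}})\leq b(\mathbb{Q}_v,(X,D_{\mathrm{red}}),L)$, so overdividing by the full product of the statement still leaves $\mathsf{Z}_{1,\varepsilon}(sL,\eta_{\mathbf{a}})$ holomorphic near $s=1$ term by term; the decay in $\mathbf{a}$ then extends the sum holomorphically. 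An identical argument controls $\mathsf{Z}_{2,\varepsilon}(sL,\omega^{\psi_a})$ via Propositions \ref{propQ} and \ref{propM}, where the inequality $\kappa_\alpha^Y\leq\kappa_\alpha$ of (\ref{4}) guarantees that the zeta arguments coming from the induced $\mathrm{U}$-compactification $Y$ are dominated by those of $X$.

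The principal obstacle is the comparison of the $S$-adic $b$-invariants $b(\mathbb{Q}_v,(X,D_{\mathrm{red}}),L,f_{\mathbf{a}})$ and $b(\mathbb{Q}_v,(X,D_{\mathrm{red}}),L,f_a)$ with the unadorned $b(\mathbb{Q}_v,(X,D_{\mathrm{red}}),L)$: conceptually, imposing a nontrivial linear character can only shrink the set of maximal-$a$-type boundary components that meet the relevant analytic cell, so the character-restricted invariant is no larger, but making this precise in the present dlt setting — together with the parallel combinatorial comparisons $\mathcal{A}^0(\mathbf{a})\cap\mathcal{A}^{\mathrm{klt}}\subseteq\mathcal{A}^{\mathrm{klt}}$ and $\mathcal{A}^0\cap\mathcal{A}^{\mathrm{klt}}\subseteq\mathcal{A}^{\mathrm{klt}}$ for the non-$S$ factors — is the bookkeeping step that carries the proof.
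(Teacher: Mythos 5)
Your proposal follows essentially the same route as the paper: specialize $\mathbf{s}=sL$, use the identity $m_\alpha(s_\alpha-\kappa_\alpha+1)=1+m_\alpha(\kappa_\alpha-\varepsilon_\alpha)(s-1)$ for $\alpha\in\mathcal{A}^{\mathrm{klt}}$, observe that $\tilde a((X,D_{\mathrm{red}}),L)=1$ so the archimedean and $S$-adic poles at $s=1$ now arise from the $\mathcal{A}^{\mathrm{nklt}}$ divisors, verify absolute convergence of the spectral decomposition exactly as in Proposition~\ref{propR}, and then read off holomorphy from Propositions~\ref{propN}, \ref{propO}, \ref{propP}, \ref{propQ} together with the regularizing factor $\prod_{v\in S}\zeta_{\mathbb{Q}_v}(s-1)^{-b(\mathbb{Q}_v,(X,D_{\mathrm{red}}),L)}$.

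The ``principal obstacle'' you isolate --- that $b(\mathbb{Q}_v,(X,D_{\mathrm{red}}),L,f_{\mathbf{a}})\le b(\mathbb{Q}_v,(X,D_{\mathrm{red}}),L)$ and $b(\mathbb{Q}_v,(X,D_{\mathrm{red}}),L,f_a)\le b(\mathbb{Q}_v,(X,D_{\mathrm{red}}),L)$ --- is genuine in the sense that the regularizing $S$-factor in the statement is fixed once and must clear the $S$-adic poles of \emph{all} spectral pieces, not just $\mathsf{Z}_{0,\varepsilon}$; the paper suppresses this entirely behind ``the rest of the proof is analogous to that of Proposition~\ref{propR}'' (and later, in Lemma~\ref{overlemma2}, simply writes ``It is clear that $b_1\le b'$''). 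But there is no bookkeeping difficulty here: by the definitions in \cite[\S 4]{PSTVA19}, $b(\mathbb{Q}_v,(X,D_{\mathrm{red}}),L,f)$ is a maximum over exactly the same collection of sets $B\subseteq\mathcal{A}_v$ as $b(\mathbb{Q}_v,(X,D_{\mathrm{red}}),L)$, subject to the \emph{additional} constraint $d_\alpha(f)=0$ for all $\alpha\in B$; a constrained maximum never exceeds the unconstrained one, so the inequality is immediate. The parallel comparisons $\mathcal{A}^0(\mathbf{a})\cap\mathcal{A}^{\mathrm{klt}}\subseteq\mathcal{A}^{\mathrm{klt}}$ and $\mathcal{A}^0\cap\mathcal{A}^{\mathrm{klt}}\subseteq\mathcal{A}^{\mathrm{klt}}$ are tautological, and overdividing by extra $\zeta_{F_\alpha}$ factors introduces only zeros. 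So the step you flag as carrying the proof is in fact automatic from the definitions, and your argument is complete.
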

\begin{proof}
For $ \alpha\in \mathcal{A}^{\mathrm{klt}} $ we have $ sL=-s\left(K_{X}+D_{\varepsilon}\right):=(s_{\alpha})_{\alpha\in \mathcal{A}^{\mathrm{klt}}} $, thus we have $ s_{\alpha}=s(\kappa_{\alpha}-\varepsilon_{\alpha}) $. Recall that $ m_{\alpha}=\frac{1}{1-\varepsilon_{\alpha}} $, therefore in Proposition \ref{propR}, we have $ m_{\alpha}(s_{\alpha}-\kappa_{\alpha}+1)=1+m_{\alpha}(\kappa_{\alpha}-\varepsilon_{\alpha})(s-1) $ for $ \alpha\in \mathcal{A}^{\mathrm{klt}} $.\par 
For $ \alpha\in \mathcal{A}^{\mathrm{nklt}} $, this case corresponds to $ v\in S $. Since $ L=-\left(K_{X}+D_{\varepsilon}\right) $, in Conjecture\ref{conj} or Proposition \ref{propN}, $ a=1 $. Thus the corresponding factor is
\begin{equation}\label{factor}
\prod_{v\in S}\zeta_{\mathbb{Q}_{v}}(s-1)^{-b(\mathbb{Q}_{v},(X,D_{\mathrm{red}}),L)}.
\end{equation}
The rest of the proof is analogous to that of Proposition \ref{propR}, i.e., using same arguments in the proof of Proposition \ref{propR} we see that the spectral decomposition 
\begin{equation*}
\mathsf{Z}_{\varepsilon}(sL)=\mathsf{Z}_{0,\varepsilon}(sL)+\mathsf{Z}_{1,\varepsilon}(sL)+\mathsf{Z}_{2,\varepsilon}(sL)
\end{equation*}
holds when $ \Re(s)>1-\delta $ for some $ \delta>0 $. Now the proposition follows from Proposition \ref{propN}, Corollary \ref{propO}, Proposition \ref{propP} and Proposition \ref{propQ} together with the factor (\ref{factor}).
\end{proof}
The proposition above implies that $ \mathsf{Z}_{\varepsilon}(sL) $ has a pole at $ s=1 $.\par
We define
\begin{equation*}
b'(\mathbb{Q},S,(X,D_{\varepsilon}),L)=\#\mathcal{A}^{\mathrm{klt}}+\sum_{v\in S}b(\mathbb{Q}_{v},(X,D_{\mathrm{red}}),L). 
\end{equation*}
Let $ D_{\mathrm{red}}=\sum_{\alpha\in \mathcal{A}}D_{\alpha} $ and $ v $ a place of $ \mathbb{Q} $, fix an embedding $ \overline{\mathbb{Q}}\subset \overline{\mathbb{Q}}_{v} $ so that $ \Gamma_{v}:=\Gal(\overline{\mathbb{Q}}_{v}/\mathbb{Q}_{v}) $ acts on $ \overline{X}=X\otimes_{\mathbb{Q}}\overline{\mathbb{Q}} $ and $ \overline{D}_{\mathrm{red}}=D_{\mathrm{red}}\otimes_{\mathbb{Q}}\overline{\mathbb{Q}} $. We denote by $ \overline{\mathcal{A}} $ the indexing set of $ \overline{D}_{\mathrm{red}} $ and by $ \mathcal{A}_{v} $ the set of orbits of $ \overline{\mathcal{A}} $ under the action of $ \Gamma_{v} $.\par

By the definition of $ b(\mathbb{Q}_{v},(X,D_{\mathrm{red}}),L) $ in \cite[\S 4]{PSTVA19} we have
\begin{equation*}
b'(\mathbb{Q},S,(X,D_{\varepsilon}),L)=\#\mathcal{A}^{\mathrm{klt}}+\sum_{v\in S}\max_{B\subseteq \mathcal{A}_{v}^{\mathrm{nklt}}}\left\lbrace\#B; \bigcap_{\beta\in B}D_{v,\beta,\mathrm{red}}(\mathbb{Q}_{v})\neq \emptyset\right\rbrace .
\end{equation*}
\begin{lemm}\label{overlemma1}
The height zeta function $ \mathsf{Z}_{0,\varepsilon}(sL) $ has a pole at $ s=1 $ of order $ b' $. 
\end{lemm}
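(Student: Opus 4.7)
The plan is to analyse the Euler product decomposition
$$\mathsf{Z}_{0,\varepsilon}(sL) = \prod_{v\in\Val(\mathbb{Q})} \mathsf{Z}_{0,\varepsilon,v}(sL)$$
and track contributions to the pole at $s=1$ place by place. First I would handle the non-$S$ part: Corollary \ref{propO} gives
$$\prod_{p\notin S}\mathsf{Z}_{0,\varepsilon,p}(sL) = \left(\prod_{\alpha\in\mathcal{A}}\zeta_{F_{\alpha},S}\bigl(m_{\alpha}(s_{\alpha}-\kappa_{\alpha}+1)\bigr)\right)\cdot \Phi(s),$$
where $\Phi$ is holomorphic on $\mathsf{T}_{>-\delta'}$. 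Substituting $s_{\alpha}=s(\kappa_{\alpha}-\varepsilon_{\alpha})$ produces the argument $1+m_{\alpha}(\kappa_{\alpha}-\varepsilon_{\alpha})(s-1)$ which equals $1$ at $s=1$. For $\alpha\in\mathcal{A}^{\mathrm{klt}}$ one has $m_{\alpha}<\infty$ so $\zeta_{F_{\alpha},S}$ contributes a simple pole; for $\alpha\in\mathcal{A}^{\mathrm{nklt}}$ the convention makes the factor trivial. This yields $\#\mathcal{A}^{\mathrm{klt}}$ poles.

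Next, for $v\in S$, I would apply Proposition \ref{propN}(2). It must first be checked that $\tilde a((X,D_{\mathrm{red}}),L)=1$: writing $aL+K_{X}+D_{\mathrm{red}}=\sum_{\alpha}\bigl(1-a\varepsilon_{\alpha}-(1-a)\kappa_{\alpha}\bigr)D_{\alpha}$, at $a=1$ this reduces to $\sum_{\alpha\in\mathcal{A}^{\mathrm{klt}}}(1-\varepsilon_{\alpha})D_{\alpha}\geq 0$, while for $a<1$ the components $\alpha\in\mathcal{A}^{\mathrm{nklt}}$ acquire negative coefficients (using $\kappa_{\alpha}\geq 2$), so $a=1$ is on the pseudo-effective boundary. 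Hence each $\mathsf{Z}_{0,\varepsilon,v}(sL)$ has a pole at $s=1$ of exact order $b(\mathbb{Q}_{v},(X,D_{\mathrm{red}}),L)$, and these sum to $\sum_{v\in S}b(\mathbb{Q}_{v},(X,D_{\mathrm{red}}),L)$.

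Combining, the total pole order at $s=1$ is at most $b'=\#\mathcal{A}^{\mathrm{klt}}+\sum_{v\in S}b(\mathbb{Q}_{v},(X,D_{\mathrm{red}}),L)$; to show it is exactly $b'$ I would verify nonvanishing of the leading Laurent coefficient. For $v\in S$ the local leading coefficient is positive by Proposition \ref{propN}(2) (with a Tamagawa/positivity argument as in \cite[Proposition 4.3]{volume}); the Dedekind zeta residues are nonzero; and $\Phi(1)>0$ by the absolute convergence of its defining Euler product together with positivity of the local integrands (same strategy as Lemma \ref{lemma} and \cite[Lemma 9.3]{PSTVA19}). Since every factor is real and positive at $s=1^{+}$, no cancellation can occur.

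The main obstacle, as I see it, is not the pole-counting (which is routine given the preceding propositions) but the positivity/nonvanishing of the leading coefficient, particularly showing $\Phi(1)>0$ for the dlt case. Unlike the klt setting of Lemma \ref{lemma}, here one cannot integrate over all of $G(\mathbb{A}_{\mathbb{Q}})$ against a finite Tamagawa-type measure, because of the components with $\varepsilon_{\alpha}=1$; instead the positivity has to be extracted place-by-place from the local structure at $v\in S$, where the boundary intersections governing $b(\mathbb{Q}_{v},(X,D_{\mathrm{red}}),L)$ contribute. This is essentially the same technical point handled in \cite[\S 9]{PSTVA19}, which I would adapt to the present biequivariant setup.
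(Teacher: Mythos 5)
Your proposal is correct and is essentially the paper's argument: the one-line proof simply cites Proposition \ref{propN} for the $v\in S$ factors and Corollary \ref{propO} for the Euler product over $p\notin S$, and your $\tilde a((X,D_{\mathrm{red}}),L)=1$ computation and pole-count make that citation explicit. The nonvanishing of the leading Laurent coefficient, which you flag as the main difficulty, is in fact routine rather than a place-by-place delicacy: each local integral $\mathsf{Z}_{0,\varepsilon,v}(sL)$ is a positive real number at $s=1$ (an integral of a nonnegative function against Haar measure), the local leading coefficients at $v\in S$ are nonzero because Proposition \ref{propN}(2) asserts the pole order there is exactly $b(\mathbb{Q}_v,(X,D_{\mathrm{red}}),L)$, and the $p\notin S$ product divided by the Dedekind zetas converges absolutely by Corollary \ref{propO} to a product of nonvanishing positive factors, so no cancellation can occur.
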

\begin{proof}
This follows from Proposition \ref{propN} and Corollary \ref{propO}.
\end{proof}

\begin{lemm}\label{overlemma2}
The order of the pole at $ s=1 $ of $ \mathsf{Z}_{1,\varepsilon}(sL) $ is strictly less than $ b' $.  
\end{lemm}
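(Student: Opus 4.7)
The plan is to use the spectral expansion
\[
\mathsf{Z}_{1,\varepsilon}(sL) \,=\, \sum_{\mathbf{a}\in\Lambda_{1}\setminus\{0\}} \mathsf{Z}_{1,\varepsilon}(sL,\eta_{\mathbf{a}}),
\]
bound the pole order of each summand at $s=1$ strictly below $b'$, and then invoke the rapid decay in $\mathbf{a}$ supplied by Proposition \ref{propP} to carry out the sum termwise. With the substitution $\mathbf{s}=sL$ for $L=-(K_{X}+D_{\varepsilon})$, the exponent $m_{\alpha}(s_{\alpha}-\kappa_{\alpha}+1)$ simplifies to $1+m_{\alpha}(\kappa_{\alpha}-\varepsilon_{\alpha})(s-1)$ for $\alpha\in\mathcal{A}^{\mathrm{klt}}$. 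Combining Proposition \ref{propP} with Proposition \ref{propJ}(2), the pole order at $s=1$ of $\mathsf{Z}_{1,\varepsilon}(sL,\eta_{\mathbf{a}})$ is bounded by
\[
b_{\mathbf{a}} \,:=\, \#\bigl(\mathcal{A}^{\mathrm{klt}}\cap\mathcal{A}^{0}(\mathbf{a})\bigr) \,+\, \sum_{v\in S} b(\mathbb{Q}_{v},(X,D_{\mathrm{red}}),L,f_{\mathbf{a}}),
\]
since nklt indices contribute trivially in Proposition \ref{propP} via the convention that $\zeta = 1$ when $\varepsilon_{\alpha}=1$. Unpacking the definitions of the $b$-invariants in \cite[\S 4]{PSTVA19} yields $b(\mathbb{Q}_{v},(X,D_{\mathrm{red}}),L,f_{\mathbf{a}}) \leq b(\mathbb{Q}_{v},(X,D_{\mathrm{red}}),L)$, so $b_{\mathbf{a}} \leq b'$, with equality forcing simultaneously $d_{\alpha}(f_{\mathbf{a}})=0$ for every $\alpha\in\mathcal{A}^{\mathrm{klt}}$ and $b(\mathbb{Q}_{v},(X,D_{\mathrm{red}}),L,f_{\mathbf{a}})=b(\mathbb{Q}_{v},(X,D_{\mathrm{red}}),L)$ at every $v\in S$.

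I would then split into two cases. If some $\alpha\in\mathcal{A}^{\mathrm{klt}}$ satisfies $d_{\alpha}(f_{\mathbf{a}})>0$, then $\#(\mathcal{A}^{\mathrm{klt}}\cap\mathcal{A}^{0}(\mathbf{a}))<\#\mathcal{A}^{\mathrm{klt}}$ and $b_{\mathbf{a}}<b'$ falls out immediately. Otherwise, $d_{\alpha}(f_{\mathbf{a}})=0$ for every $\alpha\in\mathcal{A}^{\mathrm{klt}}$, and by Corollary \ref{coro divisor} there exists $\alpha_{0}\in\mathcal{A}^{\mathrm{nklt}}$ with $d_{\alpha_{0}}(f_{\mathbf{a}})>0$. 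We then have the linear equivalence
\[
E(f_{\mathbf{a}}) \,\sim\, \sum_{\alpha\in\mathcal{A}^{\mathrm{nklt}}} d_{\alpha}(f_{\mathbf{a}}) D_{\alpha},
\]
so the vanishing locus of $f_{\mathbf{a}}$ is (up to linear equivalence) supported on a proper subcollection of the nklt boundary. The main obstacle is to extract from this a strict inequality $b(\mathbb{Q}_{v},(X,D_{\mathrm{red}}),L,f_{\mathbf{a}})<b(\mathbb{Q}_{v},(X,D_{\mathrm{red}}),L)$ at some $v\in S$: a top-dimensional stratum realizing the maximum in $b(\mathbb{Q}_{v},\ldots,L)$ does not automatically meet $\overline{E(f_{\mathbf{a}})}$, so one needs a rigidity-style argument in the spirit of the discussion following Proposition \ref{propR}, exploiting that $f_{\mathbf{a}}$ is a nonconstant linear form on $G$ and hence $E(f_{\mathbf{a}})$ genuinely cuts into the nklt boundary stratification, to rule out simultaneous equality across all $v\in S$.

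Once $b_{\mathbf{a}}<b'$ is proved for every nontrivial $\mathbf{a}\in\Lambda_{1}$, the decay bound
\[
\bigl|\mathsf{Z}_{1,\varepsilon}(sL,\eta_{\mathbf{a}})\bigr| \,\ll_{n,\mathcal{K}}\, \frac{(1+|s|)^{m_{n}}}{(1+H_{\infty}(\mathbf{a}))^{n}}
\]
from Proposition \ref{propP} makes the series $\sum_{\mathbf{a}}$ absolutely convergent on a suitable neighborhood of $s=1$, transferring the uniform termwise bound to the total sum. Thus the pole order of $\mathsf{Z}_{1,\varepsilon}(sL)$ at $s=1$ is strictly less than $b'$, as claimed. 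The heart of the argument is Case 2 above, where the strict drop must be squeezed out of the $S$-local $b$-invariants using only the information that $d_{\alpha}(f_{\mathbf{a}})$ vanishes on $\mathcal{A}^{\mathrm{klt}}$.
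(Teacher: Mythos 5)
There is a genuine gap, and you yourself flag it: the reduction to your ``Case 2'' is correct, but you never actually prove the strict drop there, and the mechanism you gesture at does not apply. The discussion after Proposition \ref{propR} uses the hypothesis that $\overline{a}L+K_{X}+D_{\varepsilon}$ is rigid, which underpins the klt Theorem \ref{main thm}. In the dlt setting of this lemma one has $L=-(K_{X}+D_{\varepsilon})$, hence $\overline{a}=1$ and $\overline{a}L+K_{X}+D_{\varepsilon}=0$, and $\mathcal{A}_{\varepsilon}(L)=\mathcal{A}$; the klt-case argument ``support of $E(f_{\mathbf{a}})$ is contained in the support of a rigid divisor'' gives nothing here, because the issue is now concentrated in the $v\in S$ contributions, exactly where no Campana constraint is imposed. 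So invoking ``a rigidity-style argument'' is not a proof but a placeholder, and you acknowledge as much in your closing sentence.

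What the paper actually does in Case 2 is a group-action argument, and it is the essential new ingredient. Assuming $b_{1}=b'$, one extracts (i) $d_{\alpha}(f_{\mathbf{a}})=0$ for all $\alpha\in\mathcal{A}^{\mathrm{klt}}$ and (ii) for each $v\in S$ a subset $B\subseteq\mathcal{A}_{v}^{\mathrm{nklt}}$ of maximal cardinality with $\bigcap_{\beta\in B}D_{v,\beta,\mathrm{red}}(\mathbb{Q}_{v})\neq\emptyset$ and $d_{\alpha}(f_{\mathbf{a}})=0$ for all $\alpha\in B$. Pick $g(b_{1},0,b_{2})\in G(\mathbb{Q})$ with $f_{\mathbf{a}}(g(b_{1},0,b_{2}))=1$, and a general $\mathbb{Q}_{v}$-point $d$ of $\bigcap_{\beta\in B}D_{v,\beta,\mathrm{red}}$ where $f_{\mathbf{a}}$ is regular and nonzero (this uses $d_{\alpha}(f_{\mathbf{a}})=0$ on $B$). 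Then $t\mapsto f_{\mathbf{a}}\bigl(g(tb_{1},0,tb_{2})\cdot d\bigr)=t+f_{\mathbf{a}}(d)$ blows up as $t\to\infty$; by biequivariance the limit $d'=\lim_{t\to\infty}g(tb_{1},0,tb_{2})\cdot d$ exists, lies in the closed set $\bigcap_{\beta\in B}D_{v,\beta,\mathrm{red}}(\mathbb{Q}_{v})$, and also lies on some $D_{\alpha}$ with $d_{\alpha}(f_{\mathbf{a}})>0$. By (i) that $\alpha$ is in $\mathcal{A}^{\mathrm{nklt}}$, by (ii) $\alpha\notin B$, so $B\cup\{\alpha\}$ is a larger nklt stratum with a $\mathbb{Q}_{v}$-point, contradicting maximality. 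Without this one-parameter flow argument your proof does not close; the rest of your setup, including the termwise decay from Proposition \ref{propP} to sum over $\mathbf{a}$, is fine.
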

\begin{proof}
We denote the order of the pole at $ s=1 $ of $ \mathsf{Z}_{1,\varepsilon}(sL) $ by $ b_{1} $. It is clear that $ b_{1}\leq b'$. As the proof of \cite[Lemma 3.5.4]{CT-additive}, we prove our result by contradiction. Assume that $ b_{1}=b' $, then comparing the formulas of $ b_{1} $ and $ b' $ we have
\begin{enumerate}
\item $ d_{\alpha}(f_{\mathbf{a}})=0 $ for all $ \alpha\in\mathcal{A}^{\mathrm{klt}} $ and all $ \mathbf{a}\in\Lambda_{1} $,
\item for any $ v\in S $ there is a subset $ B\subseteq \mathcal{A}_{v}^{\mathrm{nklt}} $ of maximal cardinality with 
\begin{equation*}
\bigcap_{\beta\in B}D_{v,\beta,\mathrm{red}}(\mathbb{Q}_{v})\neq \emptyset
\end{equation*}
such that $ d_{\alpha}(f_{\mathbf{a}})=0 $ for all $ \alpha\in B $.
\end{enumerate}
Fix a $ g(b_{1},0,b_{2})\in G(\mathbb{Q}) $ such that $ f_{\mathbf{a}}\left( g(b_{1},0,b_{2})\right)=1 $. Let us fix a $ v\in S $ and let $ B\subseteq \mathcal{A}_{v}^{\mathrm{nklt}} $ satisfy condition (2). By definition the function $ f_{\mathbf{a}} $ is defined and nonzero at general points of $ \bigcap_{\beta\in B}D_{v,\beta,\mathrm{red}} $ and there is such a general point $ d\in\bigcap_{\beta\in B}D_{v,\beta,\mathrm{red}}(\mathbb{Q}_{v}) $ for a suitable $ B $. Taking $ d'=\lim\limits_{t\rightarrow \infty}g(tb_{1},0,tb_{2})\cdot d $ where $ \cdot $ means the group action, we have $ d'\in\bigcap_{\beta\in B}D_{v,\beta,\mathrm{red}}(\mathbb{Q}_{v}) $ since the set $ \bigcap_{\beta\in B}D_{v,\beta,\mathrm{red}}(\mathbb{Q}_{v}) $ is closed. The function $ t\mapsto f_{\mathbf{a}}(g(tb_{1},0,tb_{2})\cdot d) $ is defined on $ \mathbb{P}^{1}=\{(1:t)\} $ and $ f_{\mathbf{a}}(g(tb_{1},0,tb_{2})\cdot d)\rightarrow \infty $ as $ t\rightarrow \infty $. Therefore $ d'\in D_{\alpha} $ for some $ \alpha\in \mathcal{A} $ such that $ d_{\alpha}(f_{\mathbf{a}})>0 $. Then $ \alpha\in\mathcal{A}^{\mathrm{nklt}} $ by condition (1), $ \alpha\not\in B $ by condition (2), and thus $ B'=B \cup\{\alpha\} \subseteq \mathcal{A}_{v}^{\mathrm{nklt}} $ is such that $ \bigcap_{\beta\in B'}D_{v,\beta,\mathrm{red}}(\mathbb{Q}_{v})\neq \emptyset $, contradicting the maximal cardinality condition (2).
\end{proof}
\begin{lemm}\label{overlemma3}
The order of the pole at $ s=1 $ of $ \mathsf{Z}_{2,\varepsilon}(sL) $ is strictly less than $ b' $.  
\end{lemm}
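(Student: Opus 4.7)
The plan is to argue by contradiction, following the template of Lemma \ref{overlemma2} but replacing the $G^{\mathrm{ab}}$-translation used there with a translation by the center $\mathrm{Z}$, and working with the compactification $Y \subset X$ of $\mathrm{U}$.

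First I would reduce to individual summands. By the uniform convergence of the spectral series already exploited in the proof of Proposition \ref{propR}, combined with the estimate in Proposition \ref{propQ} which decays rapidly in the eigenvalue $\lambda$ and only polynomially in $|a|$, the pole order of $\mathsf{Z}_{2,\varepsilon}(sL)$ at $s = 1$ is controlled by the maximum pole order of the individual summands $\mathsf{Z}_{2,\varepsilon}(sL, \omega^{\psi_a})$ with $a \neq 0$; it therefore suffices to show that every such summand has a pole at $s = 1$ of order strictly less than $b'$.

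Next, combining Proposition \ref{propQ} (for $v \notin S_{\psi_a}$) with Proposition \ref{propM} (for $v \in S_{\psi_a}$), and substituting $s_\alpha = s(\kappa_\alpha - \varepsilon_\alpha)$ from $L = -(K_X + D_\varepsilon)$, the pole order at $s = 1$ of $\mathsf{Z}_{2,\varepsilon}(sL, \omega^{\psi_a})$ is bounded by
\begin{equation*}
\#\bigl\{\alpha \in \mathcal{A}^{\mathrm{klt}} \cap \mathcal{A}^0 : \kappa_\alpha^Y = \kappa_\alpha\bigr\} + \sum_{v \in S_{\psi_a}} b(\mathbb{Q}_v, (X, D_{\mathrm{red}}), L, f_a),
\end{equation*}
because a zeta factor $\zeta_{F_\alpha^Y, S_{\psi_a}}(m_\alpha(s_\alpha - \kappa_\alpha^Y + 1))$ produces a pole at $s = 1$ only if $\alpha \in \mathcal{A}^{\mathrm{klt}}$ and $\kappa_\alpha^Y = \kappa_\alpha$. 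Comparing this bound with $b' = \#\mathcal{A}^{\mathrm{klt}} + \sum_{v \in S} b(\mathbb{Q}_v, (X, D_{\mathrm{red}}), L)$, using $S \subseteq S_{\psi_a}$ and $\kappa_\alpha^Y \le \kappa_\alpha$ in general, the equality case would force: (i) every $\alpha \in \mathcal{A}^{\mathrm{klt}}$ satisfies $\alpha \in \mathcal{A}^Y$, $\kappa_\alpha^Y = \kappa_\alpha$ and $d_\alpha(f_a) = 0$; and (ii) for each $v \in S$ there is a subset $B_v \subseteq \mathcal{A}_v^{\mathrm{nklt}}$ of maximal cardinality $b(\mathbb{Q}_v, (X, D_{\mathrm{red}}), L)$ with $\bigcap_{\beta \in B_v} D_{v,\beta,\mathrm{red}}(\mathbb{Q}_v) \neq \emptyset$ and $d_\alpha(f_a) = 0$ for every $\alpha \in B_v$.

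Finally I would derive the contradiction by flowing along a one-parameter subgroup of the center. Since $\psi_a \neq 1$, the form $f_a(g(0,z,0)) = a z$ is non-zero, so by Corollary \ref{coro divisor} applied on $Y$ some $\alpha_0 \in \mathcal{A}^Y$ satisfies $d_{\alpha_0}(f_a) > 0$. Choose $c \in \mathbb{Q}$ with $ac = 1$, fix $v \in S$ and the maximal set $B_v$ from (ii), and pick a general point $d \in \bigcap_{\beta \in B_v} D_{v,\beta,\mathrm{red}}(\mathbb{Q}_v)$ at which $f_a$ is defined and non-vanishing. Because each $D_\alpha$ is $G$-stable (the compactification is biequivariant), the orbit $g(0, tc, 0) \cdot d$ stays in $\bigcap_{\beta \in B_v} D_{v,\beta,\mathrm{red}}(\mathbb{Q}_v)$ for every $t$, and by compactness of $X(\mathbb{Q}_v)$ any subsequential limit $d' = \lim_{t \to \infty} g(0, tc, 0) \cdot d$ lies in the same closed intersection. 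On the other hand $f_a(g(0, tc, 0) \cdot d) = f_a(d) + t \to \infty$, so $d' \in D_\alpha$ for some $\alpha$ with $d_\alpha(f_a) > 0$; then (i) forces $\alpha \in \mathcal{A}^{\mathrm{nklt}}$ and (ii) forces $\alpha \notin B_v$, so $B_v \cup \{\alpha\}$ is a strictly larger subset of $\mathcal{A}_v^{\mathrm{nklt}}$ with non-empty intersection, contradicting maximality. The hardest part will be the second step: matching the $Y$-based invariants $(\kappa_\alpha^Y, \mathcal{A}^Y, \mathcal{A}^0)$ that appear in the pole-order bound from Propositions \ref{propQ} and \ref{propM} against the $X$-based invariants entering $b'$, and controlling the extra $S_{\psi_a} \setminus S$ contribution so that the equality case genuinely forces the rigid conditions (i) and (ii) on which the geometric step depends.
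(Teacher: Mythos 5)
Your proposal is correct and takes essentially the same route the paper intends: the paper's own proof of Lemma \ref{overlemma3} is the single sentence ``analogous to Lemma \ref{overlemma2},'' and your write-up is exactly the intended transcription, with the $G^{\mathrm{ab}}$-linear form replaced by the central form $f_a$ (via Corollary \ref{coro divisor} applied to $Y$), the degeneration along $g(tb_1,0,tb_2)$ replaced by the central flow $g(0,tc,0)$ with $ac=1$, and the $X$-based zeta factors replaced by the $Y$-based ones from Propositions \ref{propK}--\ref{propM}. The point you flag as ``hardest'' is in fact benign and resolves the way you hope: since $\kappa_\alpha^Y\le\kappa_\alpha$, a $Y$-based zeta factor contributes a pole at $s=1$ only when $\kappa_\alpha^Y=\kappa_\alpha$ and $\alpha\in\mathcal{A}^{\mathrm{klt}}\cap\mathcal{A}^0$, and the extra places in $S_{\psi_a}\setminus S$ carry the Campana indicator $\delta_{\varepsilon,v}$ and so can only lower, never raise, the pole order — hence the equality case genuinely forces your conditions (i) and (ii), after which the limit argument gives the contradiction.
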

\begin{proof}
The argument is analogous to that of Lemma \ref{overlemma2}.
\end{proof}

\begin{theo}
Let $ \mathcal{X},\mathcal{D},\varepsilon $ and $ b' $ be as above and let $ L=-\left(K_{X}+D_{\varepsilon}\right) $. Assume that $ (X,D_{\varepsilon}) $ is  dlt. Then as $ T\rightarrow \infty $, there is a constant $ \overline{c}>0 $ depending on $ S,(\mathcal{X},\mathcal{D}_{\varepsilon}) $ and $ \mathcal{L} $ such that
\begin{equation*}
\mathsf{N}(G(\mathbb{Q})_{\varepsilon},\mathcal{L},T)\sim \dfrac{\overline{c}}{(b'-1)!}T(\log T)^{b'-1}.
\end{equation*}
\end{theo}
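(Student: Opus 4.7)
The plan is to reproduce the strategy of Section~\ref{section proof} used for the klt case (Theorem~\ref{main thm}) but now with $L=-(K_X+D_\varepsilon)$, so that the rightmost pole of $\mathsf{Z}_\varepsilon(sL)$ sits at $s=1$ with order $b'$. The three immediately preceding lemmas do all the analytic heavy lifting: Lemma~\ref{overlemma1} gives that $\mathsf{Z}_{0,\varepsilon}(sL)$ has a pole of order exactly $b'$ at $s=1$, while Lemmas~\ref{overlemma2} and \ref{overlemma3} show that $\mathsf{Z}_{1,\varepsilon}(sL)$ and $\mathsf{Z}_{2,\varepsilon}(sL)$ have strictly smaller-order poles there. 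Combined with the meromorphic continuation of $\mathsf{Z}_\varepsilon(sL)$ furnished by Proposition~\ref{propS}, this pins down the rightmost singularity and its order.

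First I would assemble these pieces. Using the spectral decomposition
\begin{equation*}
\mathsf{Z}_\varepsilon(sL)=\mathsf{Z}_{0,\varepsilon}(sL)+\mathsf{Z}_{1,\varepsilon}(sL)+\mathsf{Z}_{2,\varepsilon}(sL),
\end{equation*}
together with Proposition~\ref{propS}, one concludes that $\mathsf{Z}_\varepsilon(sL)$ extends meromorphically to some half-plane $\Re(s)>1-\delta$, has a unique pole of order exactly $b'$ at $s=1$, and that the leading Laurent coefficient of $\mathsf{Z}_\varepsilon(sL)$ at $s=1$ coincides with that of $\mathsf{Z}_{0,\varepsilon}(sL)$, namely
\begin{equation*}
\overline{c}=\lim_{s\to 1}(s-1)^{b'}\mathsf{Z}_{0,\varepsilon}(sL).
\end{equation*}

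Next I would identify and verify the positivity of $\overline{c}$, following the blueprint of Lemma~\ref{lemma} (and \cite[Lemma 9.3]{PSTVA19}). The idea is to factor $\mathsf{Z}_{0,\varepsilon}(sL)=\prod_{v\in S}\mathsf{Z}_{0,\varepsilon,v}(sL)\cdot\prod_{p\notin S}\mathsf{Z}_{0,\varepsilon,p}(sL)$, extract the klt poles of the Euler product outside $S$ via the $\zeta_{F_\alpha}$-factors from Corollary~\ref{propO}, extract the $v\in S$ poles via the $\zeta_{\mathbb{Q}_v}(s-1)^{b(\mathbb{Q}_v,(X,D_{\mathrm{red}}),L)}$-factors from Proposition~\ref{propN}(2), and then identify the resulting regular limit at $s=1$ with a convergent Tamagawa-type integral of the positive integrand $\mathsf{H}(sL+K_X+D_\varepsilon,g)^{-1}$ at $s=1$ over an appropriate open subset $X^\circ(\mathbb{A}_\mathbb{Q})_\varepsilon$. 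The positivity $\overline{c}>0$ is then immediate: the integrand is strictly positive and the domain has positive measure, the latter being guaranteed by the non-emptiness of the strata defining each contribution to $b'$.

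Finally, applying the Tauberian theorem \cite[II.7, Theorem 15]{Ten} to $\mathsf{Z}_\varepsilon(sL)$ with rightmost pole of order $b'$ at $s=1$ and leading coefficient $\overline{c}$ yields
\begin{equation*}
\mathsf{N}(G(\mathbb{Q})_\varepsilon,\mathcal{L},T)\sim\frac{\overline{c}}{(b'-1)!}T(\log T)^{b'-1},
\end{equation*}
as desired. The main obstacle is the dlt-specific computation of $\overline{c}$: because $\delta_{\varepsilon,v}\equiv 1$ for $v\in S$, the local integrals $\mathsf{Z}_{0,\varepsilon,v}(sL)$ at these places now contribute higher-order poles at $s=1$ than in the klt setting, and one must carefully combine their leading Laurent coefficients with those of the klt Euler factors and of the Euler product over $p\notin S$ to produce a single, non-vanishing adelic integral. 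Tracking the normalizing factors cleanly — and justifying non-vanishing by exhibiting a $\mathbb{Q}_v$-point on the maximal-cardinality stratum that realizes $b(\mathbb{Q}_v,(X,D_{\mathrm{red}}),L)$ — is the only genuine work beyond what Section~\ref{section proof} and the preceding lemmas already supply.
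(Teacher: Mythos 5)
Your proposal matches the paper's argument exactly: both combine the meromorphic continuation from Proposition~\ref{propS}, the pole-order identifications in Lemmas~\ref{overlemma1}--\ref{overlemma3}, and the Tauberian theorem \cite[II.7, Theorem~15]{Ten}. Your extra discussion of $\overline{c}$ and its positivity is a reasonable elaboration of what the paper leaves implicit in citing Lemma~\ref{overlemma1} (a pole of order exactly $b'$ forces $\overline{c}\neq 0$), but the route is the same.
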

\begin{proof}
This follows from Proposition \ref{propS}, Lemmas \ref{overlemma1}, \ref{overlemma2}, \ref{overlemma3} and a Tauberian theorem \cite[II.7, Theorem 15]{Ten}.
\end{proof}

\bibliographystyle{alpha}
\bibliography{References}

\end{document}